\newcommand{\calB}{\mathcal{B}}
\newcommand{\calD}{\mathcal{D}}
\newcommand{\calE}{\mathcal{E}}
\newcommand{\calH}{\mathcal{H}}
\newcommand{\calM}{\mathcal{M}}
\newcommand{\CC}{\mathds{C}}
\newcommand{\NN}{\mathds{N}}
\newcommand{\RR}{\mathds{R}}
\newcommand{\ZZ}{\mathds{Z}}
\DeclareMathOperator{\supp}{supp}
\DeclareMathOperator{\capp}{cap}
\newcommand{\One}{\mathds{1}}
\newcommand{\rmd}{\mathrm{d}} 
\newtheorem{theorem}{Theorem}[section]
\newtheorem{corollary}[theorem]{Corollary}
\newtheorem{lemma}[theorem]{Lemma}
\newtheorem{proposition}[theorem]{Proposition}
\theoremstyle{definition}
\newtheorem{definition}[theorem]{Definition}
\newtheorem{example}[theorem]{Example}
\theoremstyle{remark}
\newtheorem{remark}[theorem]{Remark}
\def\clap#1{\hbox to 0pt {\hss#1\hss}}
\def\mathclap{\mathpalette\mathclapinternal}
\def\mathclapinternal#1#2{\clap{$\mathsurround=0pt#1{#2}$}}
\newcommand{\sint}[1]{\int_{\mathclap{#1}} \;}
\newcommand{\Hmm}[1]{\leavevmode{\marginpar{\tiny%
$\hbox to 0mm{\hspace*{-0.5mm}$\leftarrow$\hss}%
\vcenter{\vrule depth 0.1mm height 0.1mm width \the\marginparwidth}%
\hbox to 0mm{\hss$\rightarrow$\hspace*{-0.5mm}}$\\\relax\raggedright #1}}}
\begin{document}

\title[Intrinsic metrics for non-local forms and applications]{Intrinsic metrics for non-local symmetric Dirichlet forms and applications to spectral theory}

\author[R.~L.~Frank]{Rupert L. Frank$^1$}
\address{$^1$ Department of Mathematics, Princeton University, Princeton, NJ 08544, USA }
\email{ rlfrank@Math.Princeton.edu}

\author[D.~Lenz]{Daniel Lenz$^2$}
\address{$^2$ Mathematisches Institut, Friedrich-Schiller Universit\"at Jena, Ernst-Abb\'{e} Platz~2, 07743 Jena, Germany}
\email{ daniel.lenz@uni-jena.de }

\author[D.~Wingert]{Daniel Wingert$^3$}
\address{$^3$ Fakult\"at f\"ur Mathematik, TU Chemnitz, 09107 Chemnitz, Germany}
\email{daniel.wingert@s2000.tu-chemnitz.de}

\begin{abstract} We present a study of what may be called an intrinsic metric for a general regular Dirichlet form. For such forms we then prove a Rademacher type theorem. For strongly local forms we show existence of a maximal intrinsic metric (under a weak continuity condition) and for Dirichlet forms with an absolutely continuous jump kernel we characterize intrinsic metrics by bounds on certain integrals.
We then turn to applications on spectral theory and provide for (measure perturbation of) general regular Dirichlet forms an Allegretto-Piepenbrinck type theorem, which is based on a ground state transform, and a Shnol type theorem. Our setting includes Laplacian on manifolds, on graphs and $\alpha$-stable processes.
\end{abstract}

\maketitle

\section{Introduction}
Intrinsic metrics play an important role in the study of various features of Laplacians on manifolds and more generally of Laplacians arising from strongly local Dirichlet forms. In this context, they appear for example in the study of heat kernel estimates \cite{Sturm-94c,Sturm-96}, the investigation of stochastic completeness, recurrence and transience \cite{Sturm-94b} and spectral theory see e.g.\ \cite{BoutetdeMonvelS-03b,BoutetdeMonvelLS-08,LenzSV}. Thus, basic issues for intrinsic metrics can be considered to be well understood in the case of strongly local Dirichlet forms. For non-local Dirichlet forms the situation is completely different. In fact, already for the simplest examples, viz.\ graphs, there is no common concept of an intrinsic metric (see, however, \cite{Davies-93} for various ideas in this direction). This is the starting point for this paper.

Our basic aim is to propose an extension of the concept of intrinsic metric from strongly local Dirichlet forms to the general case  and to study some of its basic features. More precisely, we proceed as follows:

After presenting the basic ingredients of Dirichlet forms in Section \ref{Preliminaries}, we carry out a careful study of energy measure and of a suitable space of functions to be thought to belong locally to the domain in Section \ref{Energy}. In particular, we prove a certain continuity of the energy measures in Proposition \ref{prop:continuity} and discuss variants of the Leibniz rule.

In Section \ref{Intrinsicmetric} we then present a general concept  of intrinsic metric and study some of its properties. In particular, in Theorem \ref{thm:cool} we provide a Rademacher type theorem in a rather general context. This theorem has already proven useful in Stollmann's study of length spaces \cite{Stollmann-09}. In Section \ref{Intrinsic-metric-functions}, we show that specifying an intrinsic metric more or less amounts to specifying a set of Lipschitz continuous functions.

Combining these results we then have a look at the  strongly local case in our context in Section \ref{sec:strongly_local}. In this case it is possible to show existence of a maximal intrinsic metric (Theorem \ref{maximal-intrinsic-metric}). Existence of a maximal intrinsic metric fails in general for the non-local case, as we show by examples. In this sense, our results 'prove' that the non-local case is strictly more complicated than the local case as far as intrinsic metrics are concerned.

The situation of an absolutely continuous jump kernel is considered in Section \ref{Absolutely}. There, we can then characterize our intrinsic metrics by some integral type condition (Theorem \ref{thm:converse}). This is well in line with earlier results and ideas on e.g.\ graphs.

Dirichlet forms with finite jump size are considered in Section \ref{Finite}. In a precise sense, these turn out to be not much different from strongly local forms.

After these more geometric considerations we turn to spectral theory and present two applications of the developed theory. Both applications rely on the notion of generalized eigenfunction which in turn is defined using the local domain of definition. In their context, we actually allow furthermore for some perturbation of the original Dirichlet form by a (suitable) measure.

The first application, given in Section \ref{Ground} provides a ground state transform and then an Allegretto-Piepenbrink type result. This basically unifies the corresponding results of \cite{HK,LenzSV} for graphs and strongly local forms respectively.

The second application concerns a Shnol' type result. Such a result was recently shown in \cite{BoutetdeMonvelS-03b} for strongly local forms using cut-off functions induced by the intrinsic metric. Having the intrinsic metrics at our disposal, we can adapt the strategy of \cite{BoutetdeMonvelS-03b} to our general context.

Examples such as graphs and $\alpha$-stable processes to which our results can be applied are discussed in the last section.

\smallskip

For related material concerning heat semigroup estimates we refer to reader to \cite{Wingert-10}. After this work was finished we learned about recent work of Grigor'yan, Huang  and Masamune \cite{GHM} on stochastic completeness, which seems to have some points of contact with our considerations. 

\bigskip

\textbf{Acknowledgements.} D.L.\ would like to thank Peter Stollmann, Matthias Keller and Sebastian Haeseler  for most stimulating discussions.

\section{Preliminaries on Dirichlet forms} \label{Preliminaries}

This paper is concerned with Dirichlet forms on locally compact separable spaces. In this section we introduce the basic notation and concepts (see e.g.\ \cite{BouleauH-91, Davies-90b, FukushimaOT-94, MaR-92}.)

\medskip

Throughout let $X$ be a locally compact, separable metric space, $m$ a positive Radon measure on X with $\supp m = X$. The functions on $X$ we consider will all be real valued. Of course, complex valued functions could easily be considered as well after complexifying the corresponding Hilbert spaces and forms. By $C_c (X)$ we denote the set of continuous functions on $X$ with compact support. The space $L^2 (X):= L^2 (X,m)$ is the space of all measurable (square integrable with respect to $m$) real valued functions. The space $L^\infty (X,m)$ is the space of all essentially bounded functions.

\textbf{Notation.} We will mostly replace the argument $(u,u)$ by $(u)$ in all sorts of bilinear maps. Thus, in the context of forms  we will use the notation $\calE(u)$ instead of $\calE(u,u)$ and similarly $\mu^{*} (u)$ for $\mu^{*} (u,u)$ for measure valued maps etc.\ at the corresponding places below.

\subsection*{Dirichlet forms}

Recall that a closed non-negative form on $L^2(X,m)$ consists of a dense subspace $\calD \subset L^2(X,m)$ and a sesquilinear and non-negative map $\calE : \calD \times \calD \rightarrow \RR$ such that $\calD$ is complete with respect to the energy norm $\sqrt{\calE_1}$ defined by
\[ \calE_1 (u) =\calE(u) + \| u \|_{L^2(X,m)}^2. \]
In this case the space $\calD$ together with the inner product $\calE_1(u, v):=\calE(u, v) + (u, v)$ becomes a Hilbert space and $\sqrt{\calE_1}$ is the induced norm.

A closed form is said to be a \emph{Dirichlet form} if for any $u \in \calD$ and any normal contraction $T : \RR \rightarrow \RR$ we have also
\[ T\circ u\in \calD\mbox{ and }\calE(T\circ u)\le \calE(u). \]
Here, $T:\RR \to\RR$ is called a \emph{normal contraction} if
$T(0)=0$ and $|T(\xi)-T(\zeta)|\le |\xi -\zeta|$ for any
$\xi,\zeta\in\RR$. A Dirichlet form is called \emph{regular} if $\calD \cap
C_c(X)$ is dense both in $(\calD, \| \cdot \|_{\sqrt{\calE_1}})$ and
$(C_c(X), \| \cdot \|_{\infty })$.

In the remainder of this paper, we shall assume that $\calE$ is a regular Dirichlet form.

\smallskip

\subsection*{Capacity}

The capacity is a set function associated to a Dirichlet form. It measures the size of sets adapted to the form. It is defined as follows: For $U\subset X$, $U$ open, we define
\[ \capp(U) := \inf \{ \calE_1 (v) \mid v \in \calD, \One_U \le v \}, \qquad (\inf \emptyset = \infty), \]
For arbitrary $A \subset X$, we then set
\[ \capp(A) := \inf \{ \capp(U) \mid A \subset U \} \]
(see \cite{FukushimaOT-94}, Section 2.1). A property is said to hold \emph{quasi-everywhere}, short \emph{q.e.}, if it holds outside a set of capacity $0$. A function $f : X \to \RR$ is called \emph{quasi-continuous}, \emph{q.c.}\ for short, if, for any $\varepsilon >0$ there is an open set $U \subset X$ with $\capp(U) \le \varepsilon$ so that the restriction of $f$ to $X \setminus U$ is continuous. Every $u \in \calD$ admits a q.c.\ representative $\tilde{u} \in u$ (recall that $u \in L^2(X,m)$ is an equivalence class of functions) and two such q.c.\ representatives agree q.e. For $u \in \calD$, we will always choose such a quasicontinuous representative $\tilde{u}$.

\smallskip

\subsection*{Algebraic structure}

Here we highlight the following theorem.

\begin{theorem}[{\cite[Thm. 1.4.2.(ii)]{FukushimaOT-94}}]
\label{thm:algebraic_structure}
 Let $u, v \in \calD \cap L^\infty (X)$. Then $u v \in \calD$ and the estimates
 \[ \calE (uv) \leq \|u\|_\infty^2 \calE (v) + \|v\|_\infty^2 \calE (u) \]
 holds.
\end{theorem}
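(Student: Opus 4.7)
The natural strategy is the standard one for algebraic properties of Dirichlet forms: approximate $\calE$ by bilinear forms built from a sub-Markovian operator, use pointwise product inequalities that exploit the Markov structure, and pass to the limit. Concretely, let $R_\beta = (\beta + H)^{-1}$ denote the resolvent of the non-negative self-adjoint generator $H$ associated with $\calE$, and set
\[
\calE^{(\beta)}(w) \;:=\; \beta\bigl(w - \beta R_\beta w,\,w\bigr)_{L^2(X,m)}.
\]
The spectral theorem applied to $H$ gives $\calE^{(\beta)}(w)\nearrow\calE(w)$ as $\beta\to\infty$ for every $w\in L^2(X,m)$, with the convention that the limit is $+\infty$ precisely when $w\notin\calD$. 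The key feature of this approximation is that $P_\beta := \beta R_\beta$ is sub-Markovian: positivity-preserving and an $L^\infty$-contraction.

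First I would check that $uv\in L^2(X,m)$, which is immediate from $|uv|\le\|v\|_\infty |u|$ and $u\in L^2$. Next, using the sub-Markovian character of $P_\beta$ one can represent
\[
\calE^{(\beta)}(w) \;=\; \tfrac{\beta}{2}\iint_{X\times X}\!(w(x)-w(y))^2\,\sigma_\beta(dx,dy) + \beta\!\int_X w(x)^2\,\kappa_\beta(dx)
\]
for a suitable symmetric positive measure $\sigma_\beta$ and a positive measure $\kappa_\beta$; this is the jump+killing decomposition of the bounded sub-Markovian bilinear form $(\cdot,\cdot)-(P_\beta\cdot,\cdot)$. For $w=uv$ I would then use the pointwise product identity
\[
u(x)v(x)-u(y)v(y) \;=\; u(x)\bigl(v(x)-v(y)\bigr) + \bigl(u(x)-u(y)\bigr)v(y)
\]
together with $|u(x)|,|v(y)|\le\|u\|_\infty,\|v\|_\infty$, and similarly $(uv)^2\le\|u\|_\infty^2 v^2$ and $(uv)^2\le\|v\|_\infty^2 u^2$ for the killing term. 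Splitting the killing integral $u^2 v^2\,\kappa_\beta = \frac12(\|u\|_\infty^2 v^2 + \|v\|_\infty^2 u^2)\,\kappa_\beta$ (using $u^2v^2\le \frac12(\|u\|_\infty^2 v^2+\|v\|_\infty^2 u^2)$), and applying Cauchy--Schwarz on the mixed cross term in the jump part to absorb it into the two pure terms, yields the inequality at the level of $\calE^{(\beta)}$:
\[
\calE^{(\beta)}(uv) \;\le\; \|u\|_\infty^2\,\calE^{(\beta)}(v) + \|v\|_\infty^2\,\calE^{(\beta)}(u).
\]

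Finally I would let $\beta\to\infty$. By monotone convergence the right-hand side tends to $\|u\|_\infty^2\calE(v)+\|v\|_\infty^2\calE(u)$, which is finite by hypothesis; by Fatou (or again monotone convergence on the left) the left-hand side has limit $\calE(uv)$, forcing $uv\in\calD$ and delivering the stated bound. The main obstacle is bookkeeping the constants in the pointwise product inequality so that the ``diagonal'' coefficients come out as $\|u\|_\infty^2$ and $\|v\|_\infty^2$ rather than with an extra factor of $2$; the Cauchy--Schwarz step on the cross term $2\,u(x)v(y)\,(v(x)-v(y))(u(x)-u(y))$, combined with the elementary inequality $2ab\le\varepsilon a^2+\varepsilon^{-1}b^2$ chosen with $\varepsilon = \|v\|_\infty/\|u\|_\infty$ applied symmetrically, together with the careful splitting of the killing integrand, is precisely what makes the constants clean.
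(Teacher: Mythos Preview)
The paper does not give its own proof; the theorem is quoted from \cite[Thm.~1.4.2(ii)]{FukushimaOT-94}. Your overall strategy---approximate by $\calE^{(\beta)}(w)=\beta(w-\beta R_\beta w,w)$, exploit the sub-Markovian property of $\beta R_\beta$, and pass to the limit---is precisely the standard route taken in that reference and correctly yields $uv\in\calD$ together with a quantitative bound.

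The genuine gap is in your last paragraph. The displayed inequality \emph{as written}, $\calE(uv)\le\|u\|_\infty^2\calE(v)+\|v\|_\infty^2\calE(u)$, is in fact false, so no choice of $\varepsilon$ in the cross-term estimate can deliver it. Counterexample: on the two-point space $X=\{0,1\}$ with counting measure and $\calE(w)=(w(0)-w(1))^2$, take $u=v=(2,1)$; then $\calE(uv)=(4-1)^2=9$ while the right-hand side equals $4\cdot1+4\cdot1=8$. What \cite{FukushimaOT-94} actually states and proves is the square-root version
\[
\calE(uv)^{1/2}\le\|u\|_\infty\,\calE(v)^{1/2}+\|v\|_\infty\,\calE(u)^{1/2},
\]
which after squaring gives the paper's inequality only with an additional factor $2$ (harmless here, since the paper only uses that $\calD\cap L^\infty$ is an algebra with \emph{some} bound; cf.\ the explicit factor $2$ in the proof of Proposition~\ref{thm:products}(a)). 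Your jump-plus-killing representation yields this square-root inequality directly: map $w$ to $\Phi(w)=\bigl((x,y)\mapsto w(x)-w(y),\,w\bigr)$ in $L^2(\sigma_\beta)\oplus L^2(\kappa_\beta)$, split $\Phi(uv)=A+B$ with $A=\bigl(u(x)(v(x)-v(y)),\,uv\bigr)$ and $B=\bigl(v(y)(u(x)-u(y)),\,0\bigr)$, note $\|A\|\le\|u\|_\infty\|\Phi(v)\|$ and $\|B\|\le\|v\|_\infty\|\Phi(u)\|$, and apply the triangle inequality. No $\varepsilon$-balancing is needed; your attempt to eliminate the factor $2$ was chasing a statement that cannot hold.
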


Note that the theorem implies that the vector spaces $L^\infty (X) \cap \calD$, $C_c (X) \cap \calD$ and $L^\infty_c (X) \cap \calD$ are actually algebras (i.e., closed under multiplication).

\smallskip

\subsection*{Beurling-Deny formula}

There is a fundamental representation theorem for regular Dirichlet forms, known as Beurling-Deny formula. It says that to any such form $\calE$ there exists
\begin{itemize}
 \item $k$, a (non-negative) Radon measure on $X$,
 \item $J$, a (non-negative) Radon measure on $X \times X - d$, i.e., on $X \times X$ without the diagonal $d=\{(x,x): x\in X\}$,
 \item and $\mu^{(c)}$, a positive semidefinite bilinear form on $\calD$ with values in the signed Radon measures on $X$, which is strongly local, i.e., satisfies $\mu^{(c)}(u, v) = 0$ if $u$ is constant on $\supp v$,
\end{itemize}
such that for any $u \in \calD$ with q.c.\ representative $\tilde{u}$
\[ \calE(u) = \sint{X \times X - d} (\tilde{u}(x) - \tilde{u}(y))^2 \: J(\rmd x, \rmd y) + \int_X \: \rmd \mu^{(c)}(u) + \int_X \tilde{u}(x)^2 \: k(\rmd x). \]
If $J \equiv 0$, then $\calE$ is called \emph{local}.

\smallskip

The measure $J$ gives rise to the Radon measure $\mu^{(b)}$ characterized by
\[ \int_K \rmd\mu^{(b)}(u) = \sint{K \times X - d} (\tilde{u}(x) - \tilde{u}(y))^2 \: J(\rmd x, \rmd y) \]
for $K \subset X$ compact and $u \in \calD$. The measure $\mu^{(d)}$ is defined as
\[ \mu^{(d)} = \mu^{(c)} + \mu^{(b)}. \]
We then define the measure $\mu^{(a)}$
\[ \int_K \rmd \mu^{(a)}(u) = \int_K \tilde{u}^2 \: \rmd k. \]
Finally, we define the bilinear forms $\calE^{(*)}$ for $*=\cdot, a,b,c$ by
\[ \calE^{(*)}(u, v) = \int_X \rmd \mu^{(*)}(u, v) \]
 and call $\mu^{(c)}$ the \emph{strongly local part of the energy measure}, $\mu^{(b)}$ the \emph{jump part} or the \emph{pseudo-differential part} or the \emph{non-local part of the energy measure} and $\mu^{(d)}$ the \emph{differential part of the energy measure}. With this notation, the Beurling-Deny formula reads
\[ \calE(u) =\calE^{(c)}(u) +\calE^{(b)}(u) +\calE^{(a)}(u) =\calE^{(d)}(u) +\calE^{(a)}(u) . \]

From the definitions, it is not hard to see that the measures $\mu^{(*)}(u, v)$, $*=\cdot,a, b,c,d$ satisfy the Cauchy-Schwarz inequality
\[ \left(\int |f g| \rmd \mu^{(*)} (u, v) \right)^2 \leq \int |f|^2 \rmd \mu^{(*)} (u) \int |g|^2 \rmd \mu^{(*)} (v, v) \]
for all measurable $f, g$ on $X$ and all $u, v\in \calD (\calE)$.

\medskip

Let us also note that  the strongly local part $\mu^{(c)}$ satisfies the truncation property (\cite{Mosco-94})

\[ \rmd \mu^{(c)}(u \vee v, u \vee v) = \One_{\{u > v\}} \rmd \mu^{(c)}(u) + \One_{\{u \leq v\}} \rmd \mu^{(c)}(v, v). \]
Here, we write for real valued functions $f,g$ on the same space
\[ f\wedge g:= \min\{f,g\}, \;\: f \vee g :=\max\{f,g\}. \]

\bigskip

The measures $\mu^{(*)}(u, v)$, $* = a, b, c, d$ depend continuously on $u \in (\calD,\calE_1)$, when the space of measures is equiped with the vague topology (see e.g.\ \cite{Sturm-94b}). We will need a somewhat different continuity, which may be of independent interest. It is stated in the following proposition. A proof can be found in the appendix.

\begin{proposition}\label{prop:continuity}
 Let $u_n, u \in \calD \cap L^\infty(X)$ and $v \in \calD$ be given with $\| u_n \|_\infty$ bounded and $u_n \stackrel{}{\longrightarrow} u$ weakly with respect to $ \sqrt{\calE_1}$. Then, the measures $\mu^{(*)} (u_n,v)$ converge vaguely to the measure $\mu^{(*)} (u,v)$, i.e.,
 \[ \int f \rmd \mu^{(*)}(u_n, v) \rightarrow \int f \rmd \mu^{(*)}(u, v) \]
 holds for all $f \in C_c (X)$ for $*=a,b,c,d$.
\end{proposition}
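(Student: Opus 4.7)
My plan is to reduce the proposition to showing that, for each fixed $v \in \calD$ and each $f \in C_c(X)$, the map
\[ \Phi_f \colon \calD \to \RR, \qquad \Phi_f(u) := \int f \, \rmd \mu^{(*)}(u,v), \]
is a bounded linear functional on the Hilbert space $(\calD, \calE_1)$. Once this is in hand, weak convergence $u_n \to u$ with respect to $\sqrt{\calE_1}$ immediately yields $\Phi_f(u_n) \to \Phi_f(u)$, which is exactly the required vague convergence of $\mu^{(*)}(u_n, v)$ to $\mu^{(*)}(u, v)$ since $f \in C_c(X)$ is arbitrary.

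Linearity of $\Phi_f$ is immediate from the bilinearity of $\mu^{(*)}$. For continuity, my plan is first to reduce to $f \geq 0$ by splitting $f = f_+ - f_-$, and then to exploit that, for $f \geq 0$, the form
\[ (u,w) \;\longmapsto\; \int f \, \rmd \mu^{(*)}(u,w) \]
is again a positive semidefinite symmetric bilinear form on $\calD$, so a weighted Cauchy--Schwarz applies. This will yield
\[ |\Phi_f(u)|^2 \;\le\; \int f \, \rmd \mu^{(*)}(u) \cdot \int f \, \rmd \mu^{(*)}(v) \;\le\; \|f\|_\infty^2 \, \calE^{(*)}(u) \, \calE^{(*)}(v) \;\le\; \|f\|_\infty^2 \, \calE_1(u) \, \calE_1(v), \]
which is the required boundedness. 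The case $* = d$ then follows by adding $* = b$ and $* = c$.

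The step I expect to be the main technical annoyance is the preliminary check that $\mu^{(*)}(u,v)$ genuinely defines a finite signed Radon measure for each $u, v \in \calD$ (and not only for $u \in \calD \cap L^\infty$), so that the weighted Cauchy--Schwarz really makes sense. For $* = c$ this is built into the Beurling--Deny set-up. For $* = b$, the identity $\int\!\int_{X \times X - d}(\tilde u(x) - \tilde u(y))^2 \, J(\rmd x,\rmd y) = \calE^{(b)}(u) \le \calE_1(u)$ shows finiteness; for $* = a$, one has $\tilde u \in L^2(k)$ with $\|\tilde u\|_{L^2(k)}^2 = \calE^{(a)}(u)$. I remark that this approach does not appear to require the $L^\infty$-boundedness of $\|u_n\|_\infty$ assumed in the proposition; a proof proceeding instead through Leibniz-type identities, invoking the algebra property of Theorem \ref{thm:algebraic_structure} to control products $f u_n$, would by contrast genuinely need that hypothesis.
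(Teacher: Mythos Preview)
Your argument is correct and in fact more streamlined than the paper's. The paper also begins with the Cauchy--Schwarz bound
\[
\left|\int f\,\rmd\mu^{(*)}(u_n,v)\right|^2 \le \|f\|_\infty^2\,\calE^{(*)}(u_n)\,\calE^{(*)}(v),
\]
but uses it only to reduce to $f\in C_c(X)\cap\calD$ and $v\in\calD\cap L^\infty$. It then rewrites, via a Leibniz-type identity,
\[
2\int f\,\rmd\mu^{(*)}(u_n,v)=\calE^{(*)}(u_n f,v)+\calE^{(*)}(vf,u_n)-\calE^{(*)}(u_n v,f)
\]
(for $*=b,c,d$), invokes the algebra estimate of Theorem~\ref{thm:algebraic_structure} to see that $u_n f$ and $u_n v$ are $\calE_1$-bounded (this is where the uniform bound on $\|u_n\|_\infty$ enters), upgrades $L^2$-weak convergence of these products to $\calE_1$-weak convergence, and finally passes to the limit term by term using that $\calE^{(*)}\le\calE_1$. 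Your approach short-circuits all of this: the same Cauchy--Schwarz bound, applied to an arbitrary $u\in\calD$ rather than just to the given sequence, already exhibits $\Phi_f$ as a bounded linear functional on $(\calD,\calE_1)$, and weak continuity is then automatic. Your remark that the $L^\infty$ hypothesis is superfluous for this route is correct; the paper's detour through Leibniz and the algebra structure is what forces it there. What the paper's approach buys is a rehearsal of the product machinery used heavily later (Sections~\ref{Energy}--\ref{sec:caccio}); what yours buys is a cleaner, hypothesis-light proof of the proposition itself.
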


\section{Energy measure and the space $\calD_{loc}^*$} \label{Energy}

\subsection{The local form domain $\calD_{loc}^*$}\label{sec:locformdom}

Functions which locally belong to the domain of the form play a crucial role in the theory of Dirichlet forms. This holds in particular for the use of strongly local forms in the theory of partial differential equations. In that case there is a well known space $\calD_{loc}$ (defined below) with the following three properties:

\begin{itemize}
 \item Functions in $\calD_{loc}$ locally agree with functions in $\calD$.
 \item The measure valued functions $\mu^{(a)}$ and $\mu^{(c)}$ can be extended to $\calD_{loc}$.
 \item The form $\calE$ can be extended so that $\calE(u, v)$ makes sense for all $u, v \in \calD_{loc}$ such that $v$ has compact support.
\end{itemize}
Here, we introduce a space which has these three properties and agrees with the usually defined $\calD_{loc}$ in the strongly local case. The basic idea is to find the 'biggest' set of functions to which the measures $\mu^{(a)}$ to $\mu^{(c)}$ can be extended.

\medskip

In \cite{FukushimaOT-94} one can find the set of functions, which are locally in the domain of definition of $\calE$, defined as
\[ \{u \in L^2_{loc} \mid \;\forall G \subset X \text{ open, relatively compact } \;\; \exists {v} \in \calD \text{ with } u = {v} \text{ on } G \}. \]
We will denote this set as $ \calD_{loc}$.

 Some of its properties are gathered in the following proposition. These properties are of course well known. For the convenience of the readers we provide the short proofs.

\begin{proposition} Let $\calE$ be a regular Dirichlet form on $(X, m)$. Then, the following holds.
 \begin{itemize}
 \item[(a)] $\calD \subset \calD_{loc}$.
 \item[(b)] For all $u \in \calD_{loc}$ there exists a quasi-continuous version $\tilde{u}$.
 \item[(c)] $1 \in \calD_{loc}$.
 \end{itemize}
\end{proposition}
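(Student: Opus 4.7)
The plan is straightforward in each part, and the three items become progressively less trivial.

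For (a), I would just take $v := u$ itself: if $u \in \calD$ and $G \subset X$ is open relatively compact, then $v = u \in \calD$ satisfies $u = v$ on $G$, so $u \in \calD_{loc}$.

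For (b), I would exploit $\sigma$-compactness of $X$ (a locally compact separable metric space is $\sigma$-compact) to pick an exhaustion $(G_n)_{n \geq 1}$ of $X$ by open relatively compact sets with $\overline{G_n} \subset G_{n+1}$, and set $G_0 := \emptyset$. For each $n$ choose $v_n \in \calD$ with $u = v_n$ $m$-a.e.\ on $G_n$, and let $\tilde{v}_n$ be a quasi-continuous version. The key observation is that for $n \leq k$ one has $\tilde{v}_n = \tilde{v}_k$ q.e.\ on $G_n$, since they agree $m$-a.e.\ on $G_n$ and two q.c.\ representatives agreeing a.e.\ agree q.e. I would then define $\tilde{u}(x) := \tilde{v}_n(x)$ for $x \in G_n \setminus G_{n-1}$. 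This is a version of $u$; the real work is verifying quasi-continuity. Given $\varepsilon > 0$, pick open sets $U_n$ with $\capp(U_n) < \varepsilon \cdot 2^{-n-2}$ on whose complement $\tilde{v}_n$ is continuous, and for each pair $n < k$ pick open $V_{n,k}$ containing the (capacity zero) exceptional set $\{x \in G_n : \tilde{v}_n(x) \ne \tilde{v}_k(x)\}$ with $\capp(V_{n,k}) < \varepsilon \cdot 2^{-n-k-2}$. Countable subadditivity of capacity gives $\capp(U) \leq \varepsilon$ for the union $U$ of all these sets. On $X \setminus U$, any $x \in G_N$ has a neighborhood $W \Subset G_N$, and for $y \in W \setminus U$ lying in $G_n \setminus G_{n-1}$ one has $\tilde{u}(y) = \tilde{v}_n(y) = \tilde{v}_N(y)$ (using $y \notin V_{n,N}$); hence $\tilde{u}|_{X \setminus U}$ coincides with $\tilde{v}_N|_{X \setminus U}$ on $W$ and is continuous at $x$. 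This is the main technical step, and bookkeeping with countably many exceptional sets is the principal obstacle.

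For (c), I must first note that $1 \in L^2_{loc}$ holds because $m$ is Radon and hence finite on compacts. To exhibit the required $v \in \calD$ with $v \equiv 1$ on a given open relatively compact $G$, I would invoke regularity as follows. By Urysohn's lemma pick $f \in C_c(X)$ with $0 \leq f \leq 2$ and $f \equiv 2$ on a compact neighborhood of $\overline{G}$. Regularity of $\calE$ supplies $g \in \calD \cap C_c(X)$ with $\|g - f\|_\infty < 1/2$, so that $g > 3/2$ on $\overline{G}$. The map $T(t) := 0 \vee (t - 1/2) \wedge 1$ satisfies $T(0) = 0$ and is $1$-Lipschitz, hence is a normal contraction, so $v := T \circ g \in \calD$ by the Dirichlet property. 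Since $g > 3/2$ on $G$, we have $v \equiv 1$ there, as desired.
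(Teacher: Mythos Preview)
Your proposal is correct. Parts (a) and (c) match the paper's approach; in (c) you spell out the construction of a cutoff function via regularity and a normal contraction, which is exactly the content of the reference \cite[Lemma~1.4.2.(ii)]{FukushimaOT-94} the paper cites in one line.

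Part (b) is where your argument genuinely differs. The paper invokes \cite[Theorem~2.1.2]{FukushimaOT-94} to obtain a single $m$-regular nest $(F_k)$ on which \emph{all} the q.c.\ versions $u_n$ are simultaneously continuous, and then uses the $m$-regularity of the nest (every point of $F_k$ is an $m$-accumulation point) to deduce that $u_m = u_n$ pointwise on $F_k \cap G_n$ for $m \geq n$; the function $\tilde u$ is then defined on each $F_k$ directly. Your approach avoids the notion of an $m$-regular nest entirely: you take separate small-capacity open sets $U_n$ for each $\tilde v_n$ and separate $V_{n,k}$ covering the q.e.\ exceptional sets, and assemble them by countable subadditivity of capacity. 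The paper's route is shorter because one citation replaces your bookkeeping, while yours is more self-contained and makes no use of regular nests. Both rely (yours explicitly, the paper's implicitly through the nest argument) on the fact that quasi-continuous functions agreeing $m$-a.e.\ on an open set agree q.e.\ there.
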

\begin{proof}
 (a) is clear.

 (b) Let $u \in \calD_{loc}$. We choose $G_n$ open, relatively compact with $\bigcup_n G_n = X$ and $G_n \subset G_{n+1}$. Then there exists $u_n \in \calD$ quasi-continuous with $u_n = u$ m-a.e.\ on $G_n$. According to \cite[Theorem 2.1.2]{FukushimaOT-94} there exists a m-regular nest $F_k$ (i.e., $\capp(X \setminus F_k) \rightarrow 0$ and $m(F_k \cap U(x)) > 0 \;\forall x \in F_k$ and $U(x)$ a neighbourhood of $x$) with $u_n \big|_{F_k}$ continuous for all $n$ and $k$. Let now be $x \in F_k \cap G_n$ for some $n$. \\
 Because $m(U(x) \cap F_k) > 0$, there is a point $x_U \subset U(x) \cap F_k$ with $u_m(x_U) = u(x_U) = u_n(x_U)$ for all $m \geq n$. As $x_U \rightarrow x$ for appropriately  chosen $U(x)$ and on acount of the continuity it follows that $u_m(x) = u_n(x)$ for all $m \geq n$. Hence $u_m(x) = u_n(x)$ for all $m \geq n$ and $x \in F_k \cap G_n$. \\
 We now define $\tilde{u}(x) := u_n(x)$ for $x \in F_k \cap G_n$. $\tilde{u}$ is well defined on every $F_k$ and continuous. Therefore $\tilde{u}$ is quasi-continuous, and as $m(X \setminus F_k) \leq \capp(X \setminus F_k) \rightarrow 0$ we know that $\tilde{u}$ is a quasi-continuous version of $u$.

 (c) This is a direct consequence of \cite[Lemma 1.4.2.(ii)]{FukushimaOT-94} (for all $G$ open, relatively compact there exists a $u \in \calD \cap C_c(X)$ with $u(G) \equiv 1$).
\end{proof}

For $u\in\calD_{loc}$ we will always choose a quasi-continuous representative $\tilde u$ as in part (b) of the proposition, and we shall often simply write $u$ for this representative.

Note that $\mu^{(c)}$ can be extended to $\calD_{loc}$ because of its local property. To be precise: $\mu^{(c)}(u) = \mu^{(c)}(v)$ on $G \subset X$ open, relatively compact, whenever $u = v$ on $G$, see \cite[Remarks to Thm. 3.2.2.]{FukushimaOT-94}. This $\mu^{(c)}$ is again called the strongly local part of the energy measure.
In order to be able to extend $\mu^{(b)}$ as well, we will have to restrict our attention to a certain subset of $ \calD_{loc}$ which we define next.

\begin{definition}
 The space $\calD_{loc}^*$ of \emph{functions locally in domain} is defined to be the set of all functions $u \in \calD_{loc}$ with the property that
 \[ \sint{K \times X - d} (\tilde{u}(x) - \tilde{u}(y))^2 \: J(\rmd x, \rmd y) < \infty \]
 for all compact $K\subset X$.
\end{definition}

We can extend $\mu^{(b)}$ to the space $\calD_{loc}^*$. To do so, we define for $E\subset X$ measurable and $u\in \calD_{loc}^*$
\[ \mu^{(b)}(u)(E) := \mu^{(b)} (u):= \sint{E \times X - d} (\tilde{u}(x) - \tilde{u}(y))^2 \: J(\rmd x, \rmd y). \]

\begin{proposition}
 For $u \in \calD_{loc}^*$, the map $\mu^{(b)} (u) (\cdot)$ is a Radon measure.
\end{proposition}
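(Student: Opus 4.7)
The plan is to verify the three defining properties of a Radon measure for the set function $E \mapsto \mu^{(b)}(u)(E)$: $\sigma$-additivity on Borel sets, finiteness on compacts, and regularity. All three follow from the fact that $J$ itself is a Radon measure on $X \times X - d$ combined with the topological structure of $X$.

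First I would address $\sigma$-additivity. Given a sequence of pairwise disjoint Borel sets $(E_n)_{n \in \NN}$ in $X$, the sets $(E_n \times X) - d$ are pairwise disjoint Borel subsets of $X \times X - d$ whose union is $(\bigcup_n E_n) \times X - d$. Using that $(x,y) \mapsto (\tilde u(x) - \tilde u(y))^2$ is non-negative and Borel measurable (this is where quasi-continuity of $\tilde u$ enters, giving measurability of the integrand), the monotone convergence theorem applied to the partial sums of $\sum_n \One_{E_n \times X - d}(x,y)(\tilde u(x)-\tilde u(y))^2$ yields
\[ \mu^{(b)}(u)\left(\bigcup_n E_n\right) = \sum_n \mu^{(b)}(u)(E_n), \]
together with $\mu^{(b)}(u)(\emptyset)=0$. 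Hence $\mu^{(b)}(u)$ is a Borel measure on $X$.

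Second, finiteness on compact sets is built into the definition: the condition $u \in \calD_{loc}^*$ says precisely that $\mu^{(b)}(u)(K) < \infty$ for every compact $K \subset X$.

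Third, for the Radon property one invokes the standard fact that on a second countable locally compact Hausdorff space — which $X$ is, being a locally compact separable metric space — every Borel measure that is finite on compact sets is automatically inner and outer regular, hence Radon. (Alternatively, regularity can be transferred directly from $J$: outer regularity of $\mu^{(b)}(u)$ on a Borel set $E$ follows by applying outer regularity of the measure $A \mapsto \int_A (\tilde u(x)-\tilde u(y))^2\,J(\rmd x,\rmd y)$ to sets of the form $E \times X - d$.)

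The only step where some care is warranted is the measurability of the integrand in the presence of only quasi-continuous $\tilde u$: one uses that a q.c.\ representative is Borel measurable up to a set of capacity zero, and that the choice of q.c.\ representative does not affect the integral since $J$ does not charge polar sets. Apart from this, the statement is essentially a direct consequence of Fubini/Tonelli applied to $J$.
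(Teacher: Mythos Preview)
Your proof is correct, but it follows a different path from the paper's. The paper takes $\sigma$-additivity and finiteness on compacts for granted (calling these ``obvious'') and devotes the argument to inner regularity: it uses that $J$ is Radon on $X\times X - d$, so the integral over $E \times X - d$ can be approximated from inside by integrals over compact $K \subset E \times X - d$; then the projection $K'$ of $K$ onto the first factor is compact in $X$, $K' \subset E$, and $\mu^{(b)}(u)(K')$ dominates the integral over $K$, which yields inner regularity of $\mu^{(b)}(u)$ directly.

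Your main route is instead to invoke the general fact that on a second countable locally compact Hausdorff space every locally finite Borel measure is automatically Radon. This is cleaner and sidesteps the projection argument entirely; the paper's argument, by contrast, is more self-contained and makes visible how regularity of $\mu^{(b)}(u)$ is inherited from that of $J$, at the cost of a small extra step. Both are valid here since $X$ is a locally compact separable metric space.

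One small remark on your final paragraph: the assertion that ``$J$ does not charge polar sets'' is established only later in the paper (in the lemma preceding the Leibniz rule), so strictly speaking you should not invoke it at this stage. Fortunately it is not needed for the proposition: any fixed quasi-continuous representative $\tilde u$ is Borel measurable, so the integrand is measurable, and that is all the statement requires. Independence of the choice of representative is a separate (later) issue.
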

\begin{proof}
 We only have to show, that $\mu^{(b)}$ is inner regular, the rest is obvious. For this let $E \subset X$ be measurable. As $J$ is a Radon measure, $\mu^{(b)}(u )(E)$ can be approximated by $\int_{K} (\tilde{u}(x) - \tilde{u}(y))^2 \: J(\rmd x, \rmd y)$ with $K \subset E \times X - d$ compact. But then $\mu^{(b)}(u)(K') = \int_{K' \times X - d} (\tilde{u}(x) - \tilde{u}(y))^2 \: J(\rmd x, \rmd y)$ with $K' := \{x \in X : \exists y \in X \text{ with } (x, y) \in K\}$ the projection from $K$ on the first component also approximates $\mu^{(b)}(u )(E)$. As $K'$ is compact the desired regularity follows.
\end{proof}

Having extended $\mu^{(b)}$ and $\mu^{(c)}$ we can also extend their sum and obtain a Radon measure again denoted by $\mu^{(d)}$.
Again, we call $\mu^{(b)}$ the \emph{jump part} or the \emph{pseudo-differential part} or the \emph{non-local part of the energy measure} and $\mu^{(d)}$ the \emph{differential part of the energy measure}.

\smallskip

Now, we can also extend the form $\calE$ in the desired way.

\begin{theorem} Any $\varphi \in \calD_{loc}^*$ with compact support belongs to $\calD$ and
 \[ \calE(u, \varphi) := \int_X \: \rmd \mu^{(d)}(u, \varphi) + \int_X \tilde{u} \tilde{\varphi} \: \rmd k \]
 is well-defined (i.e., the integrals on the right hand side exist) for all $u, \varphi \in \calD_{loc}^*$ with $\supp \varphi$ compact.
\end{theorem}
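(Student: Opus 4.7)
My plan is to split the statement into two parts: (i) any $\varphi\in\calD_{loc}^*$ with compact support lies in $\calD$; and (ii) for $u,\varphi\in\calD_{loc}^*$ with $\supp\varphi$ compact, both integrals on the right hand side are absolutely convergent.

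For (i), I would set $K:=\supp\varphi$, pick a relatively compact open $G\supset K$, a function $v\in\calD$ with $v=\varphi$ on $G$ (by the definition of $\calD_{loc}$), and, using regularity of $\calE$, a cut-off $\eta\in\calD\cap C_c(X)$ with $0\le\eta\le 1$, $\eta\equiv 1$ on $K$, $\supp\eta\subset G$. Truncating $v_N:=(v\wedge N)\vee(-N)\in\calD\cap L^\infty(X)$, I would put $\varphi_N:=\eta v_N$, which lies in $\calD$ by Theorem~\ref{thm:algebraic_structure}. A case distinction on $K$, $G\setminus K$, and $X\setminus G$ shows $\varphi_N=(\varphi\wedge N)\vee(-N)$ $m$-a.e., so $\varphi_N\to\varphi$ in $L^2(X,m)$ by dominated convergence. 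To deduce $\varphi\in\calD$ from closedness of $\calE$ (lower semicontinuity on $L^2$), I would bound $\calE(\varphi_N)$ uniformly in $N$ using the Beurling--Deny decomposition: since truncation is a normal contraction and $\supp\varphi_N\subset K$, the jump part satisfies $\mu^{(b)}(\varphi_N)(X)\le 2\mu^{(b)}(\varphi_N)(K)\le 2\mu^{(b)}(\varphi)(K)<\infty$ (first inequality by symmetry of $J$ and vanishing of the integrand on $K^c\times K^c$, the last by definition of $\calD_{loc}^*$); strong locality of $\mu^{(c)}$ and the equality $\mu^{(c)}(\varphi)=\mu^{(c)}(v)$ on $G$ give $\mu^{(c)}(\varphi_N)(X)=\mu^{(c)}(\varphi_N)(K)\le\mu^{(c)}(v)(G)<\infty$; and since $k$ is smooth the killing term satisfies $\int\tilde\varphi_N^{\,2}\,dk\le\int_K\tilde v^2\,dk<\infty$, using that q.e.\ equality $\tilde\varphi=\tilde v$ on $G$ transfers under $k$.

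For (ii), I would bound each of the three pieces by Cauchy--Schwarz. Strong locality concentrates $\mu^{(c)}(u,\varphi)$ on $K$, so
\[ \Bigl|\int \rmd\mu^{(c)}(u,\varphi)\Bigr|\le\mu^{(c)}(u)(K)^{1/2}\mu^{(c)}(\varphi)(K)^{1/2}, \]
with both factors finite by the argument from (i) (applied to $u$ via a $u'\in\calD$ that agrees with $u$ on $G$, and to $\varphi$). For the jump part, the integrand of
\[ \int \rmd\mu^{(b)}(u,\varphi)=\sint{X\times X-d}(\tilde u(x)-\tilde u(y))(\tilde\varphi(x)-\tilde\varphi(y))\,J(\rmd x,\rmd y) \]
vanishes on $K^c\times K^c$, so Cauchy--Schwarz combined with symmetry of $J$ produces the bound $2\,\mu^{(b)}(u)(K)^{1/2}\mu^{(b)}(\varphi)(K)^{1/2}$, finite by the definition of $\calD_{loc}^*$. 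Finally, $\int\tilde u\tilde\varphi\,dk=\int_K\tilde u\tilde\varphi\,dk$ since $\tilde\varphi$ vanishes off $K$, and Cauchy--Schwarz bounds it by $(\int_K\tilde u^2\,dk)^{1/2}(\int_K\tilde\varphi^2\,dk)^{1/2}$, each factor controlled by the corresponding $\calE^{(a)}$ of the $\calD$-representative on $G$ since $k$ does not charge polar sets.

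I expect the main obstacle to be (i): Theorem~\ref{thm:algebraic_structure} yields $\eta v\in\calD$ only when both $\eta,v\in L^\infty$, yet $v$, and possibly $\varphi$ itself, may be unbounded. Truncating $v$ first and approximating $\varphi$ by $\varphi_N$ circumvents this, but one must then verify that the uniform bound on $\calE(\varphi_N)$ rests only on \emph{local} data around $K$ -- the extended measures $\mu^{(b)}(\varphi)(K)$ and $\mu^{(c)}(\varphi)(K)$, together with the global $\calD$-quantities of the auxiliary function $v$ -- rather than on any $L^\infty$-control of $v$ away from $G$.
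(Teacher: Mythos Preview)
Your proposal is correct. For the well-definedness of the jump integral in part (ii) you follow exactly the paper's route: set $K=\supp\tilde\varphi$, observe that the integrand vanishes on $K^c\times K^c$, and control the pieces over $K\times X$ and $K^c\times K$ by symmetry of $J$ and Cauchy--Schwarz. The paper's proof literally says ``We only have to look at the non-local part'' and gives precisely this splitting; your treatment of $\mu^{(c)}$ and the killing term simply spells out what the paper dismisses as obvious.

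Where you genuinely add content is part (i). The paper's proof does not address the assertion $\varphi\in\calD$ at all. Your truncation argument (approximate $\varphi$ by $\varphi_N=\eta v_N=(\varphi\wedge N)\vee(-N)\in\calD$, bound $\calE(\varphi_N)$ uniformly via the extended measures $\mu^{(b)}(\varphi)(K)$, $\mu^{(c)}(v)(G)$, and $\int_K\tilde v^2\,\rmd k$, then invoke closedness) is a correct and necessary filling of this gap. Note that the naive estimate from Theorem~\ref{thm:algebraic_structure}, $\calE(\eta v_N)\le\|\eta\|_\infty^2\calE(v_N)+\|v_N\|_\infty^2\calE(\eta)$, blows up with $N$, so your recourse to the Beurling--Deny decomposition and the $\calD_{loc}^*$-hypothesis is the right move; the examples after Proposition~\ref{thm:products} confirm that one cannot simply multiply $\eta\in\calD\cap C_c(X)$ into an unbounded $v\in\calD$ without extra structure. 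One small refinement: for the strongly local bound it is cleanest to argue $\mu^{(c)}(\varphi_N)|_G=\mu^{(c)}(v_N)|_G$ (locality) and then $\mu^{(c)}(v_N)\le\mu^{(c)}(v)$ via the truncation property, which yields $\mu^{(c)}(\varphi_N)(K)\le\mu^{(c)}(v)(G)\le\calE(v)$ directly.
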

\begin{proof}
 We only have to look at the non-local part, that is
 \[ \int_{X \times X - d} (\tilde{u}(x) - \tilde{u}(y)) (\tilde{\varphi}(x) - \tilde{\varphi}(y)) \: J(\rmd x, \rmd y). \]
 We now set $K := \supp \tilde{\varphi}$. Because the integral over $K \times X - d$ is well-defined, and over $K^c \times K^c - d$ is zero, we only have to look at $K^c \times K - d$ or, using the symmetry, at $K \times K^c - d$. But on this set the integral is well-defined as part of a well-defined integral.
\end{proof}

\smallskip

The space $\calD_{loc}^*$ and the measures $\mu^{(*)}$ are well compatible with approximation via cut-off procedures.  This will be relevant later on. We discuss the corresponding details in the next lemma.

\begin{lemma}\label{lem:3.5}
 Let $u \in L^\infty_{loc}\cup \calD_{loc}$ and assume that there is a Radon measure $m_1$ such that for every $T>0$ one has $u_T := ( u \wedge T) \vee( -T) \in \calD_{loc}^*$ and $\mu^{(b)}(u_T) \leq m_1$. Then $u \in \calD_{loc}^*$ and $ \mu^{(*)}(u) = \lim_{T \rightarrow \infty} \mu^{(*)}(u_T )$ for $*=a,b,c,d$. In particular, $\mu^{(b)}(u) \leq m_1$.
\end{lemma}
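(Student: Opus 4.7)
The basic strategy is to reduce every assertion to pointwise monotone convergence of integrands, and then invoke the Monotone Convergence Theorem. The key pointwise fact is that for $f_T(s) := (s \wedge T) \vee (-T)$ and any $a > b$,
\[ f_T(a) - f_T(b) = \int_b^a \One_{[-T,T]}(s)\, \rmd s, \]
which is nondecreasing in $T$ and converges to $a - b$. Hence $|f_T(a) - f_T(b)|$ is monotonically nondecreasing in $T$, and in particular $(\tilde u_T(x) - \tilde u_T(y))^2 \uparrow (\tilde u(x) - \tilde u(y))^2$ for all $x, y$, while $\tilde u_T^2 \uparrow \tilde u^2$.

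First I would check that $u \in \calD_{loc}$ in order to make sense of a quasi-continuous representative $\tilde u$. If $u \in \calD_{loc}$ this is trivial; if $u \in L^\infty_{loc}$, then on each relatively compact open $G \subset X$ we have $u = u_T$ for every $T \geq \| u \|_{L^\infty(G)}$, and since $u_T \in \calD_{loc}^* \subset \calD_{loc}$ the required local $\calD$-representative on $G$ transfers to $u$. Choosing the quasi-continuous version $\tilde u$, the function $\tilde u_T := (\tilde u \wedge T) \vee (-T)$ serves as a quasi-continuous representative of $u_T$.

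For the jump part, monotone convergence applied on $E \times X - d$ against $J$ yields, for every Borel $E \subset X$,
\[ \mu^{(b)}(u_T)(E) \;\uparrow\; \sint{E \times X - d} (\tilde u(x) - \tilde u(y))^2 \, J(\rmd x, \rmd y). \]
Evaluated on a compact $K$ the left side is bounded by $m_1(K) < \infty$, so the limiting integral is finite: this proves the defining integral condition for $\calD_{loc}^*$, hence $u \in \calD_{loc}^*$; the formula then defines $\mu^{(b)}(u)$, we obtain $\mu^{(b)}(u_T) \uparrow \mu^{(b)}(u)$ setwise, and, passing to the limit in $\mu^{(b)}(u_T) \leq m_1$, also $\mu^{(b)}(u) \leq m_1$.

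For the strongly local part I would apply the truncation property twice to a $v \in \calD$ with the constants $T$ and $-T$; since constants carry zero $\mu^{(c)}$, this yields
\[ \rmd\mu^{(c)}(v_T) = \One_{\{-T < v < T\}}\, \rmd\mu^{(c)}(v). \]
Locality of $\mu^{(c)}$ on $\calD_{loc}$ lifts this identity to our $u$, and monotone convergence of $\One_{\{-T < u < T\}} \uparrow 1$ gives $\mu^{(c)}(u_T) \uparrow \mu^{(c)}(u)$ setwise. Similarly $\rmd\mu^{(a)}(u_T) = \tilde u_T^2 \, \rmd k \uparrow \tilde u^2 \, \rmd k = \rmd\mu^{(a)}(u)$, and adding handles $\mu^{(d)} = \mu^{(b)} + \mu^{(c)}$. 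Setwise monotone convergence on compact sets implies, via monotone convergence for continuous compactly supported test functions, the vague convergence $\int f \, \rmd\mu^{(*)}(u_T) \to \int f \, \rmd\mu^{(*)}(u)$ for $f \in C_c(X)$. The main obstacle I expect is precisely the transfer of the truncation identity for $\mu^{(c)}$ from $\calD$ to $\calD_{loc}$; this relies on the fact that the truncation formula depends only on local data together with the extension of $\mu^{(c)}$ to $\calD_{loc}$ by locality.
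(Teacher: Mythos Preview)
Your proposal is correct and, for the membership $u\in\calD_{loc}$, the jump part $\mu^{(b)}$, and the killing part $\mu^{(a)}$, it follows exactly the paper's route via monotone convergence of the pointwise integrands.

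The one genuine difference is in the treatment of the strongly local part $\mu^{(c)}$. The paper does not invoke the truncation identity $\rmd\mu^{(c)}(u_T)=\One_{\{-T<\tilde u\le T\}}\,\rmd\mu^{(c)}(u)$; instead it quotes the fact (from \cite[Thm.~1.4.2(iii)]{FukushimaOT-94}) that $v_T\to v$ in $\calE_1$ for every $v\in\calD$, and then relies on the vague continuity of $\mu^{(c)}$ under $\calE_1$-convergence together with locality to pass from $v$ to $u\in\calD_{loc}$. Your argument is more self-contained and actually yields a slightly stronger conclusion (setwise monotone convergence rather than just vague convergence), at the price of having to transfer the truncation formula from $\calD$ to $\calD_{loc}$; as you note, this transfer is legitimate because the identity is purely local and $\mu^{(c)}$ has already been extended to $\calD_{loc}$ via locality. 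One minor point: the indicator you obtain is $\One_{\{-T<\tilde u\le T\}}$ (or $\One_{\{-T\le\tilde u< T\}}$) rather than $\One_{\{-T<\tilde u<T\}}$, depending on the order of the two truncations, but this is irrelevant in the limit since $\tilde u$ is finite q.e.\ and $\mu^{(c)}$ does not charge capacity-zero sets.
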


\begin{proof}
 Note that $u\in L_{loc}^\infty $ agrees locally with $u_T$ for $T$ big enough. Thus, obviously $u$ belongs to $\calD_{loc}$. Therefore we only have to show that
 \[ \int_{K \times X - d} (\tilde{u}(x) - \tilde{u}(y))^2 \: J(\rmd x, \rmd y) < \infty. \]
 This follows as $(u_T (x) - u_T (y))^2$ converges monotonically to $(u(x) - u(y))^2$ and
 \[ \int_{K \times X - d} (u_T(x) - u_T(y))^2 \: J(\rmd x, \rmd y) \leq m_1(K) < \infty \]
 uniformly in $T$. For $*=a,b$ the convergence of $\mu^{(*)}(u_T )$ is also clear by monotone convergence. To deal with $* = c$ , i.e., the strongly local part, we note that $v_T \rightarrow v$ with respect to $\calE_1$ for all $v \in \calD$; see \cite[Theorem 1.4.2.iii]{FukushimaOT-94}.
\end{proof}

\subsection{The Leibniz rule} \label{Leibnizrule}

We now turn to stability under taking products and the Leibniz rule. In order to do this we introduce the measure $\Gamma$. Let $C_c (X \times X)$ denote the continuous real valued functions on $X \times X$ with compact support.
For $u,v\in \calD_{loc}^* $ we define the measure $\Gamma(u,v)$ on $X\times X$ by
\begin{footnotesize}
 \[ \sint{X \times X} f(x, y) \: \rmd \Gamma(u, v) := \int_X f(x, x) \: \rmd \mu^{(c)}(u, v) + \sint{X \times X - d} f(x, y) (\tilde{u}(x) - \tilde{u}(y)) (\tilde{v}(x) - \tilde{v}(y)) \: J(\rmd x, \rmd y) \]
\end{footnotesize}
for $f\in C_c (X\times X)$. This is well defined by the Definition of $\calD_{loc}^*$ and Cauchy-Schwarz inequality.
 We are going to extend this equality to a larger class of functions. These are the functions defined quasi everywhere. They are given as follows:
 A measurable $f : X \times X \rightarrow \RR$ is said to be defined q.e., if there exists a set $E\in X$ with $\capp (E)=0$ and $f$ is defined on $((X \setminus E) \times X) \cap (X \times (X \setminus E)) = (E \times E)^c$. The following lemma will allow us to extend the definition of $\Gamma$ to functions which are defined q.e.

\begin{lemma}
 Let $u \in \calD_{loc}^*$ and $M \subset X$ be measurable with $\capp(M) = 0$. Then
 \[ \sint{M \times X - d} (\tilde{u}(x) - \tilde{u}(y))^2 \: J(\rmd x, \rmd y) = 0. \]
\end{lemma}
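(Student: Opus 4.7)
The plan is to show that the Radon measure $E \mapsto \int_{E \times X - d}(\tilde u(x) - \tilde u(y))^2\, J(\rmd x, \rmd y)$ --- which is $\mu^{(b)}(u)$ by the preceding proposition --- assigns zero mass to the polar set $M$. The key external input I will rely on is the classical fact that for any $w \in \calD$ the energy measure $\mu^{(b)}(w)$ is smooth, i.e., does not charge sets of zero capacity (see \cite[Section~3.2]{FukushimaOT-94}); the remaining task is to bridge the gap between a general $u \in \calD_{loc}^*$ and such a $w$.

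First I would perform two reductions. Since $X$ is $\sigma$-compact and $\mu^{(b)}(u)$ is Radon, by taking an open, relatively compact exhaustion it is enough to prove $\mu^{(b)}(u)(M\cap G) = 0$ for an arbitrary fixed relatively compact open $G$; relabeling, I may assume $M$ itself sits inside such a $G$. Second, set $u_T := (u \wedge T) \vee (-T)$. Truncation is a normal contraction, so $u_T \in \calD_{loc}^*$ and $(\tilde u_T(x) - \tilde u_T(y))^2$ increases monotonically to $(\tilde u(x) - \tilde u(y))^2$; monotone convergence reduces matters to the case where $u$ is bounded.

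For bounded $u$, regularity of $\calE$ produces a cutoff $\chi \in \calD \cap C_c(X)$ with $0 \le \chi \le 1$ and $\chi \equiv 1$ on $\overline G$. I would first show $\chi u \in \calD$: picking a relatively compact open $G_1 \supset \supp \chi$ and $v \in \calD$ with $u = v$ on $G_1$, the normal contraction $(v \wedge \|u\|_\infty) \vee (-\|u\|_\infty)$ lets me assume $v \in \calD \cap L^\infty$, and then $\chi u = \chi v \in \calD$ by Theorem~\ref{thm:algebraic_structure}. For $x \in M$ one has $\chi(x) = 1$, which gives the decomposition
\[ \tilde u(x) - \tilde u(y) \;=\; \bigl(\widetilde{\chi u}(x) - \widetilde{\chi u}(y)\bigr) \,-\, \bigl(\chi(x) - \chi(y)\bigr)\tilde u(y). \]
Squaring with $(a-b)^2 \le 2a^2 + 2b^2$, bounding $|\tilde u| \le \|u\|_\infty$, and integrating over $M \times X - d$ against $J$ then yields
\[ \mu^{(b)}(u)(M) \;\le\; 2\,\mu^{(b)}(\chi u)(M) \,+\, 2\,\|u\|_\infty^2\, \mu^{(b)}(\chi)(M), \]
and both terms on the right vanish because $\chi u, \chi \in \calD$ and $M$ is polar.

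The main obstacle I expect is precisely the cited smoothness of $\mu^{(b)}(w)$ for $w \in \calD$; this is classical but is not proved within the excerpt, so a user of this lemma must either cite it or import the relevant part of \cite{FukushimaOT-94}. A minor technical point is that the pointwise identities above are to be read quasi-everywhere, which is ensured by consistently using the quasi-continuous representatives introduced in Section~\ref{sec:locformdom}.
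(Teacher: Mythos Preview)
Your proof is correct, but it takes a genuinely different route from the paper's.

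The paper does not invoke the smoothness of $\mu^{(b)}(w)$ for $w\in\calD$ as a black box. Instead it reduces to $M$ compact (by inner regularity of $J$), picks capacitary potentials $f_n\in\calD\cap C_c(X)$ with $f_n\equiv 1$ on $M$ and $\calE_1(f_n)\to 0$, localizes to $M\times K$ with $K$ compact, replaces $u$ by a bounded $v\in\calD$ agreeing with $u_T$ on $K\cup M$, and then uses the algebraic identity
\[
\int_{X\times X - d} f_n(x)\,(\tilde v(x)-\tilde v(y))^2\,J(\rmd x,\rmd y)
\;=\;\calE^{(b)}(v f_n, v)-\tfrac12\,\calE^{(b)}(f_n, v^2)
\]
together with the $\calE_1$-weak convergence $v f_n\to 0$ and $\calE_1$-convergence $f_n\to 0$ to drive the right-hand side to zero. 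In effect, the paper \emph{re-proves} the smoothness of $\mu^{(b)}$ on $\calD$ in situ.

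Your approach instead separates the two issues cleanly: a cutoff $\chi$ handles the passage from $\calD_{loc}^*$ to $\calD$, and the cited smoothness result from \cite{FukushimaOT-94} handles the polar set. This is shorter and more transparent, at the cost of importing an external fact (which, as you note, is standard: the full energy measure $\mu_{\langle w\rangle}$ of $w\in\calD$ is smooth, and since it decomposes as a sum of non-negative pieces each piece is smooth as well). The paper's argument is more self-contained and shows explicitly \emph{why} $\mu^{(b)}$ vanishes on polar sets, which is perhaps pedagogically valuable given that the surrounding section is building up the theory of $\Gamma$ from scratch.
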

\begin{proof}
 As $J$ is a Radon measure and thus inner regular it suffices to show the claim for $M$ compact. As $M$ is compact with $cap (M)=0$, there is a sequence $f_n \in \calD \cap C_c(X)$ with $f_n = 1$ on $M$, $f_n \geq 0$ and $\calE_1(f_n, f_n) \rightarrow 0$. For any $v\in L^\infty \cap \calD$ we then have by the results on algebraic structure above that $ \calE_1 (v f_n)$ is bounded. In particular, $v f_n $ contains $\calE_1$ weakly converging subsequences. As $f_n$ converges to $0$ in $L^2$ and $v$ is bounded, the sequence $(v f_n)$ converges to $0$ in $L^2$ and we infer that $v f_n$ itself converges $\calE_1$ weakly to $0$. After these preparations we can now proceed as follows: By a standard approximation result we can replace
 \[ \sint{M \times X - d} (\tilde{u}(x) - \tilde{u}(y))^2 \: J(\rmd x, \rmd y) < \infty \]
 by the integral
 \[ \sint{M \times K - d} (\tilde{u}_T(x) - \tilde{u}_T(y))^2 \: J(\rmd x, \rmd y) \]
 with $K$ compact and $u_T := u \wedge T \vee (-T)$. We now choose $v \in \calD \cap L^\infty$ with $v = u_T$ on $K \cup M$. Then
 \begin{align*}
 \sint{M \times K - d} (\tilde{u}_T(x) - \tilde{u}_T(y))^2 \: J(\rmd x, \rmd y) &= \sint{M \times K - d} (\tilde{v}(x) - \tilde{v}(y))^2 \: J(\rmd x, \rmd y) \\
 &\leq \sint{X \times X - d} f_n(x) (\tilde{v}(x) - \tilde{v}(y))^2 \: J(\rmd x, \rmd y) \\
 &= \calE^{(b)}(v f_n, v) - \frac{1}{2} \calE^{(b)} (f_n, v^2).
 \end{align*}
 As discussed above, $(f_n)$ tends to $0$ with respect to $\calE_1$ and $(v f_n)$ converges $\calE_1$ weakly to $0$ and we see that the right hand side of the last inequality tends to zero. This finishes the proof.
\end{proof}

For $u, v \in \calD_{loc}^*$ and $f$ measurable and defined q.e.\ we then obtain
\begin{footnotesize}
 \[ \sint{X \times X} f(x, y) \: \rmd \Gamma(u, v) = \int_X f(x) \: \rmd \mu^{(c)}(u, v) + \sint{X \times X - d} f(x, y) (\tilde{u}(x) - \tilde{u}(y)) (\tilde{v}(x) - \tilde{v}(y)) \: J(\rmd x, \rmd y). \]
\end{footnotesize}

Given this, we can note the following version of the Leibniz rule.

\begin{theorem}[Leibniz rule]\label{Leibniz}
 Let $u, v \in \calD_{loc}^*$ with $u \cdot v \in \calD_{loc}^*$ be given. Then the following holds:
 \begin{itemize}
 \item [(a)] The equality
  \[ \sint{X \times X} f(x, y) \: \rmd \Gamma(u \cdot v, w) = \sint{X \times X} f(x, y) \tilde{u}(x) \: \rmd \Gamma(v, w) + \sint{X \times X} f(x, y) \tilde{v}(y) \: \rmd \Gamma(u, w) \]
  holds for all $f$ measurable and defined q.e., whenever (at least) two of the integrals exist.
 \item[(b)] The equation
  \[ \int_X \: \rmd \mu^{(d)}(u \cdot v, w) = \int_X \tilde{u} \: \rmd \mu^{(d)}(v, w) + \int_X \tilde{v} \: \rmd \mu^{(d)}(u, w) \]
  holds, whenever (at least) two of the integrals exist.
 \end{itemize}
\end{theorem}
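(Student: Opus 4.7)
The plan is to split the measure $\Gamma(u\cdot v, w)$ according to its two constituent pieces, handle the jump and strongly local parts by completely different means, and then recover (b) from (a) by specialising the test function and exploiting the symmetry of $J$.

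For the jump part, the core observation is the purely algebraic identity
\[ \tilde{u}(x)\tilde{v}(x) - \tilde{u}(y)\tilde{v}(y) = \tilde{u}(x)\bigl(\tilde{v}(x) - \tilde{v}(y)\bigr) + \tilde{v}(y)\bigl(\tilde{u}(x) - \tilde{u}(y)\bigr), \]
which is just adding and subtracting $\tilde u(x)\tilde v(y)$. Multiplying both sides by $f(x,y)(\tilde w(x)-\tilde w(y))$ and integrating against $J$ over $X\times X-d$ splits the jump contribution to $\int f\, \rmd\Gamma(uv,w)$ into exactly the jump contributions of $\int f(x,y)\tilde u(x)\,\rmd\Gamma(v,w)$ and $\int f(x,y)\tilde v(y)\,\rmd\Gamma(u,w)$, as prescribed by (a). The hypothesis $u\cdot v\in\calD_{loc}^*$ together with the Cauchy--Schwarz inequality for $J$ gives the finiteness needed to rearrange the integrals on any compact piece; the ``two out of three'' clause in the statement is simply what lets one carry this Fubini-style splitting through on all of $X\times X$.

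For the strongly local part one invokes the classical Leibniz identity $\rmd\mu^{(c)}(uv,w)=\tilde u\,\rmd\mu^{(c)}(v,w)+\tilde v\,\rmd\mu^{(c)}(u,w)$ known for $u,v\in\calD\cap L^\infty$ from \cite[Lemma~3.2.5]{FukushimaOT-94}. To transfer this to the present setting, I would fix a relatively compact open $G$, use the definition of $\calD_{loc}$ to choose $u',v'\in\calD$ agreeing with $u,v$ on $G$, truncate them to bounded functions via $u'_T,v'_T$ (their product is then in $\calD\cap L^\infty$ by Theorem~\ref{thm:algebraic_structure}), and apply the classical identity on $G$. Strong locality of $\mu^{(c)}$ and Lemma~\ref{lem:3.5} let one pass $T\to\infty$ and then patch the local identities together into a global one. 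Adding the strongly local and the jump contributions, and reading off the result against the definition of $\Gamma$, yields (a).

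For (b) I would take $f\equiv 1$ in (a). The term $\int \tilde u(x)\,\rmd\Gamma(v,w)$ equals $\int\tilde u\,\rmd\mu^{(c)}(v,w)+\int\tilde u(x)(\tilde v(x)-\tilde v(y))(\tilde w(x)-\tilde w(y))\,J(\rmd x,\rmd y)=\int\tilde u\,\rmd\mu^{(d)}(v,w)$ by the very definition of $\mu^{(b)}$. The term $\int \tilde v(y)\,\rmd\Gamma(u,w)$ is trickier because of the asymmetric occurrence of $y$; here I use that $J$ is symmetric and that $(\tilde u(x)-\tilde u(y))(\tilde w(x)-\tilde w(y))$ is symmetric in $(x,y)$, so the substitution $x\leftrightarrow y$ replaces $\tilde v(y)$ by $\tilde v(x)$ without changing the integral, producing $\int\tilde v\,\rmd\mu^{(b)}(u,w)$ and hence $\int\tilde v\,\rmd\mu^{(d)}(u,w)$. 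The main obstacle in the whole argument is really a bookkeeping one: ensuring that these manipulations preserve the ``two of three integrals exist'' hypothesis rather than producing an ill-defined $\infty-\infty$, which is handled by first performing everything against a nonnegative compactly supported test function $f$ in (a), then using monotone convergence to let the support exhaust $X$ and reach $f\equiv 1$ in (b).
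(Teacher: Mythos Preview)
Your treatment of the jump part and of part (b) matches the paper (the paper merely says ``direct consequence of (a)'' for (b), so your symmetry argument for the $\tilde v(y)$ term is a correct elaboration of what is meant).

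For the strongly local part, the outline is in the right spirit but the invocation of Lemma~\ref{lem:3.5} is misplaced. That lemma gives convergence of $\mu^{(*)}(u_T)$ for a \emph{single} function under a uniform $\mu^{(b)}$-bound; it says nothing about convergence of the bilinear expression $\mu^{(c)}(u'_T v'_T, w)$ or of the products $\tilde u'_T\, \rmd\mu^{(c)}(v'_T,w)$ as $T\to\infty$ when $u',v'$ are possibly unbounded, which is exactly the case that needs work. The paper avoids any limit passage here: from the bounded case and the truncation property of $\mu^{(c)}$ it obtains the identity \eqref{eq:LeibnizSL} directly as an equality of measures restricted to each set $F_n:=\{|\tilde u|\le n,\ |\tilde v|\le n\}$, hence on $F=\bigcup_n F_n$. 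The genuinely missing ingredient in your sketch is then to show that the exceptional set $F^c$ is negligible for $\mu^{(c)}$. The paper handles this with a capacity argument: the estimate $\capp(\{|w|>n\})\le \calE_1(w)/n^2$ for $w\in\calD$, applied to elements of $\calD$ that agree locally with $u$ and $v$, gives $\capp(F^c\cap K)=0$ for every compact $K$ and hence $\capp(F^c)=0$; since $\mu^{(c)}$ does not charge sets of capacity zero, the identity extends to all of $X$. Once this is in place, no $T\to\infty$ limit is required at all.
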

\begin{proof}
 (a) We consider the strongly local and the jump part separately.

 \smallskip

 For the strongly local part and bounded $u$ and $v$ this is well known (see e.g.\ the discussion in \cite{Sturm-94b}). With the truncation property we can then conclude for $u, v$ with $u \cdot v \in \calD_{loc}^*$ that
 \begin{equation}\label{eq:LeibnizSL}
 \mu^{(c)}(u v, w) = \tilde{u} \mu^{(c)}(v, w) + \tilde{v} \mu^{(c)}(u, w)
 \end{equation}
 on $F_n := \{ | \tilde{u} | \leq n \text{ and } | \tilde{v} | \leq n \}$ for all $n \in \NN$ and hence on $F := \bigcup_{n=1}^\infty F_n$. By \cite[Lemma 2.1.6]{FukushimaOT-94} $\capp(\{ |w| > n \}) \leq \frac{\calE_1(w)}{n^2}$ for every $w \in \calD$. As both $u$ and $v$ agree on compact sets with elements from $\calD$ we can then infer that $\capp(F^c \cap K) = 0$ for every compact $K \subset X$. As $\capp$ is a Choquet capacity $\capp(F^c) = 0$ follows. Now, the equality (\ref{eq:LeibnizSL}) follows on the full set $X$, as $\mu^{(c)}$ does not charge sets of capacity zero, see \cite[Lemma 3.2.4.]{FukushimaOT-94}.

 The non-local part can be treated by simple algebraic manipulations involving
 \[ (u \cdot v)(x) - (u \cdot v)(y) = u(x) \cdot (v(x) - v(y)) + v(y) \cdot (u(x) - u(y)). \]

 \smallskip

 (b) This is a direct consequence of (a).
\end{proof}

To apply the Leibniz rule we need that the product of the two functions $u,v \in \calD_{loc}^* $ belongs again to this space. Here, we discuss sufficient conditions for this (compare Theorem \ref{thm:algebraic_structure} as well.)

\begin{proposition}\label{thm:products}
 \begin{enumerate}
 \item[(a)] Let $u \in \calD_{loc}^* \cap L_{loc}^\infty(X)$ and $v \in \calD_{loc}^* \cap L^\infty(X)$. Then $u v \in \calD_{loc}^* \cap L_{loc}^\infty(X)$.
 \item[(b)] Let $u \in \calD$ and $v \in \calD_{loc}^* \cap L^\infty(X)$ be given such that $\mu^{(d)}(v)$ is absolutely continuous with respect to $m$ with bounded density. Then $u v \in \calD$.
 \item[(c)] Let $u \in \calD_{loc}^*$ and $v \in \calD_{loc}^* \cap L^\infty(X)$ be given such that $\mu^{(d)}(v)$ is absolutely continuous with respect to $m$ with bounded density. Then $u v \in \calD_{loc}^*$.
 \end{enumerate}
\end{proposition}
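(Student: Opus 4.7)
The plan is to exploit the pointwise decomposition
\[ (u(x)v(x) - u(y)v(y))^2 \leq 2 u(x)^2 (v(x) - v(y))^2 + 2 v(y)^2 (u(x) - u(y))^2 \]
for the jump part, together with the Leibniz identity $\rmd\mu^{(c)}(uv) = u^2\,\rmd\mu^{(c)}(v) + 2uv\,\rmd\mu^{(c)}(u,v) + v^2\,\rmd\mu^{(c)}(u)$ combined with the Cauchy-Schwarz inequality for $\mu^{(c)}$ recalled in Section~\ref{Preliminaries}, which yields $\rmd\mu^{(c)}(uv) \leq 2u^2\,\rmd\mu^{(c)}(v) + 2v^2\,\rmd\mu^{(c)}(u)$. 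Parts (b) and (c) will then be reduced to (a) by $\calE_1$-approximation and localisation, respectively.

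For part (a), local boundedness of $uv$ is immediate. To show $uv \in \calD_{loc}$, I would fix a relatively compact open $G$, choose $u',v'\in\calD$ with $u'=u$ and $v'=v$ on $G$, and then truncate $u'$ at $\|u\|_{L^\infty(\overline G)}$ and $v'$ at $\|v\|_\infty$ using the Dirichlet property of $\calE$ to obtain $u'',v''\in\calD\cap L^\infty$ still agreeing with $u,v$ on $G$; Theorem~\ref{thm:algebraic_structure} then gives $u''v''\in\calD$, which is what is needed. The $\calD_{loc}^*$ condition would follow by applying the pointwise decomposition on $K \times X - d$ and bounding the two pieces by $\|u\|_{L^\infty(K)}^2\,\mu^{(b)}(v)(K)$ and $\|v\|_\infty^2\,\mu^{(b)}(u)(K)$, both finite by hypothesis.

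For part (b), I would choose $u_n \in C_c(X) \cap \calD \subset L^\infty \cap \calD$ with $u_n \to u$ in $\calE_1$ (possible by regularity). Part~(a) then gives $u_nv\in\calD_{loc}^*\cap L^\infty_{loc}$, and the compact support of $u_nv$ together with the theorem in Section~\ref{sec:locformdom} stating that compactly supported elements of $\calD_{loc}^*$ lie in $\calD$ yields $u_n v\in\calD$. The $\calE_1$-Cauchyness of $(u_n v)$ would be established by applying the pointwise decomposition (for $*=b$) and the Leibniz/Cauchy-Schwarz estimate (for $*=c$) to get
\[ \calE^{(*)}((u_n-u_m)v) \leq 2\int (u_n-u_m)^2\,\rmd\mu^{(*)}(v) + 2\|v\|_\infty^2\,\calE^{(*)}(u_n-u_m), \]
where the first term is dominated by $2\|\rho\|_\infty\|u_n-u_m\|_{L^2}^2$ thanks to the density hypothesis on $\mu^{(d)}(v)$ (the non-negative summands $\mu^{(b)}(v), \mu^{(c)}(v)$ inherit bounded densities). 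The $\calE^{(a)}$-contribution and the $L^2$-norm are controlled by $\|v\|_\infty$, and closedness of $\calE$ finishes the job.

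For part (c), localise (b). For each relatively compact open $G$, pick $u'\in\calD$ with $u'=u$ on $G$; part~(b) applied to $u'$ gives $u'v\in\calD$, and since $u'v=uv$ on $G$, we get $uv\in\calD_{loc}$. For the $\calD_{loc}^*$ bound on a compact $K$, I would take $G\supset K$ open and relatively compact with corresponding $u'$, and split $K\times X-d$ as $K\times G - d$ (where $uv=u'v$, so the integral is bounded by $\calE^{(b)}(u'v)<\infty$) and $K\times G^c$ (where $u(x)=u'(x)$ on $K$, so the pointwise decomposition bounds the first piece by $2\|\rho^{(b)}\|_\infty\|u'\|_{L^2}^2$ and the second by $2\|v\|_\infty^2\,\mu^{(b)}(u)(K)$, both finite). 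The main subtlety I anticipate is justifying the strongly local Leibniz estimate in (b) when $v$ only belongs to $\calD_{loc}^*$ rather than $\calD$; this is precisely handled by the extended Leibniz rule of Theorem~\ref{Leibniz}, whose hypothesis $(u_n-u_m)v\in\calD_{loc}^*$ is supplied by part~(a).
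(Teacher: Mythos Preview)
Your proposal is correct and for part (a) essentially matches the paper. For parts (b) and (c) you take slightly different routes that are worth noting.

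For (b), the paper approximates $u$ by its truncations $u_n := (u\wedge n)\vee(-n)$, obtains a \emph{uniform} bound $\calE(u_n v)\le 2C\,\calE_1(u)$ via the Leibniz rule applied to $\Gamma$ (Theorem~\ref{Leibniz}), and concludes $uv\in\calD$ from $u_n v\to uv$ in $L^2$ together with closedness/lower semicontinuity of the form. You instead approximate $u$ by $C_c(X)\cap\calD$ functions via regularity and show the sequence $(u_n v)$ is $\calE_1$-Cauchy. Both arguments use the same core estimate (the pointwise/Leibniz decomposition combined with the bounded-density hypothesis); the paper's choice avoids having to verify that $u_n v\in\calD$ in a separate step, since truncations stay in $\calD$ automatically, whereas your route needs the theorem that compactly supported elements of $\calD_{loc}^*$ lie in $\calD$.

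For (c), the paper's $\calD_{loc}^*$ estimate is a single line: on $K\times X-d$ the decomposition gives directly
\[
\int_{K\times X-d}\!\!((uv)(x)-(uv)(y))^2\,J
\;\le\;2\int_K \tilde u^2\,\rmd\mu^{(b)}(v)+2\|v\|_\infty^2\,\mu^{(b)}(u)(K),
\]
the first term being finite because $u\in L^2_{loc}(m)$ and $\mu^{(b)}(v)$ has bounded density. Your splitting $K\times X-d = (K\times G-d)\cup(K\times G^c)$ and the detour through $u'$ and $\calE^{(b)}(u'v)$ is correct but unnecessary; the direct estimate already works for the original~$u$.
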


\begin{proof}
 (a) From the algebraic properties, we see, that $u \cdot v \in \calD_{loc}$. For the rest see
 \begin{footnotesize}\begin{align*}
 \sint{K \times X - d} &((u \cdot v)(x) - (u \cdot v)(y))^2 \: J(\rmd x, \rmd y) = \sint{K \times X - d} u(x)^2 (v(x) - v(y))^2 + ...\\
 &\qquad ... + 2 u(x) v(y) (u(x) - u(y)) (v(x) - v(y)) + v(y)^2 (u(x) - u(y))^2 \: J(\rmd x, \rmd y) \\
 &\leq 2 \| u \|_{\infty, K}^2 \sint{K \times X - d} (v(x) - v(y))^2 \: J(\rmd x, \rmd y) + 2 \| v \|_\infty^2 \sint{K \times X - d} (u(x) - u(y))^2 \: J(\rmd x, \rmd y) < \infty.
 \end{align*}\end{footnotesize}

 (b) Let $u_n := u \wedge n \vee -n$. Let $C$ be a bound for $v^2$ and for the density $\rmd \mu^{(d)}(v, v) / \rmd m$. We then have
 \begin{align*}
 \calE(u_n v, u_n v) &= \calE^a(u_n v, u_n v) + \sint{X \times X} \:\rmd\Gamma(u_n v, u_n v) \\
 \intertext{and by Leibniz rule and Cauchy-Schwarz}
 &\leq C \calE^a(u_n, u_n) + 2 \sint{X \times X} u_n(x)^2 \:\rmd\Gamma(v, v) + 2 \sint{X \times X} v(y)^2 \:\rmd\Gamma(u_n, u_n) \\
 &\leq 2 C \calE_1(u_n, u_n) \leq 2 C \calE_1(u)
 \end{align*}
 uniformly in $n$. As obviously $(u_n v)$ converges to $u v$ in $L^2$ we conclude $u v \in \calD$ by closedness of the form (see e.g.\ \cite[Theorem VI.1.16]{Kato-66}).

 (c) From (b) we know, that $u v \in \calD_{loc}$. Now calculate (for details see (a) and (b))
 \begin{align*}
 \sint{K \times X - d} ((u \cdot &v)(x) - (u \cdot v)(y))^2 \: J(\rmd x, \rmd y) \leq 2 \sint{K \times X - d} u(x)^2 (v(x) - v(y))^2 \: J(\rmd x, \rmd y) + ... \\
 &\qquad ... + 2 \sint{K \times X - d} v(y)^2 (u(x) - u(y))^2 \: J(\rmd x, \rmd y) \\
 &\leq 2 \int_K u^2 \:\rmd\mu^{(b)}(v, v) + 2 \| v \|_\infty^2 \int_K \:\rmd\mu^{(b)}(u) < \infty. \qedhere
 \end{align*}
\end{proof}

The previous discussion naturally raises the question whether the product of $u\varphi$ belongs to $\calD$ whenever $u\in \calD $ and $\varphi\in \calD \cap C_0 (X)$. This does not need to be the case as we illustrate with both a non-local and a local example:

\begin{example}
 Let $X := [-1, 1]$ and
 \[ \calE(u, v) := \int_0^1 (u(x) - u(-x))(v(x) - v(-x)) |x|^{-5/2} \: \rmd x \]
 \[ \tilde{\calD} := \{u \in C(X) \mid \calE(u) < \infty \} \]
 By Fatou's lemma the form $\calE$ is closable. Let $(\calE, \calD)$ be its closure. By definition $(\calE, \calD)$ is then a regular Dirichlet form. Note that $\calE (w)$ vanishes whenever $w$ is even. Simple cut-off procedures then show that $v$ with $v(x) := |x|^{-1/4}$ belongs to $\calD$. Moreover, $u$ with
 $u(x) := x$ belongs to $\widetilde{\calD}\subset \calD$. Hence $u, v \in \calD$, but by a direct calculation one sees that the product $u v $ does not belong to $\calD$.
\end{example}

\begin{example}
 Define
 \[ u(x) := \begin{cases}
       |x|^{5/8} \cdot \sin(\frac{1}{|x|}) & |x| \in (0, \pi^{-1}) \\
       0 & \text{otherwise}
      \end{cases} \]
 \[ v(x) := (|x|^{-1/4} - 1)^+. \]
 Then direct calculations show that $u \in W^{1, 2}(\RR^3) \cap C_c(\RR^3)$ and $v \in W^{1, 2}(\RR^3)$, but $u \cdot v \notin W^{1, 2}(\RR^3)$ as the derivative of $u v$ does not belong to $L^2$.
\end{example}

\section{Intrinsic metrics}\label{Intrinsicmetric}

For strongly local Dirichlet forms the intrinsic metric is a powerful tool. It has been used in studying decay of heat kernels, the investigation of Harnack inequalities and to get 'good' cut-off functions in the study of spectral properties e.g.\ in \cite{BiroliM-95, BoutetdeMonvelLS-08, Sturm-94b, terElstRSZ-06}.
Our aim is to generalize this concept to non-local Dirichlet forms. This is done in this section.

\medskip

We begin with a short discussion of pseudo-metrics. A map $\varrho : X \times X \rightarrow [0,\infty]$ is called a \emph{pseudo-metric} if $\varrho(x,x)=0$, $\varrho (x,y) = \varrho (y,x)$ and $\varrho(x,y) \leq \varrho (x,z) + \varrho (z,y)$ for all $x,y,z\in X$. A consequence of the triangle inequality, which will be useful later on, is the estimate
\[ |\varrho (x,y) - \varrho(x',y')|\leq \varrho(x,x') +\varrho (y,y'). \]
We emphasize that $\varrho$ may not be continuous with respect to the original topology.

Whenever $\varrho$ is a pseudo-metric on $X$ and $A \subset X$ we can define
\[ \varrho_A(x) := \inf_{y \in A} \varrho(x, y). \]
If $\varrho$ is a pseudo-metric, then so is $\varrho \wedge T$ for any $T \geq 0$. One has
\[ (\varrho \wedge T)_A = \varrho_A \wedge T. \]
and the estimate
\[ |\varrho_A (x) \wedge T - \varrho_A (y) \wedge T| \leq \varrho (x,y) \]
holds for any $x, y \in X$. This estimate shows that, if $\varrho$ is continuous, then so is $\varrho_A\wedge T$ and $\varrho_A$.

\begin{definition}
 A pseudo-metric $\varrho : X \times X \rightarrow [0, \infty]$ is called an \emph{intrinsic metric} with respect to the Dirichlet form $\calE$ if there are two Radon measures $m_b$ and $m_c$ with $m_b + m_c \leq m$ such that for all $A\subset X$ and all $T > 0$ the function $\varrho_A$ defined above satisfies
 \begin{itemize}
 \item $\varrho_A \wedge T \in \calD_{loc}^* \cap C(X)$,
 \item $\mu^{(b)}(\varrho_A \wedge T) \leq m_b$,
 \item $\mu^{(c)}(\varrho_A \wedge T) \leq m_c$.
 \end{itemize}
\end{definition}

To illustrate this notion we consider the Dirichlet forms of the continuous and discrete Laplacians.

\begin{example}
Consider $X=\RR^d$, $d\geq 1$, with Lebesgue measure and
\[ \calE(u) := \int_{\RR^d} |\nabla u|^2 dx, \qquad \calD=W^{1,2}(\RR^d). \]
Then the standard Euclidean distance, $\varrho(x,y):=|x-y|$, is an intrinsic metric for $\calE$. Indeed, for $A\subset \RR^d$ and $T> 0$ the function $\varrho_A \wedge T$ is Lipschitz continuous and its gradient exists a.e.\ and equals $|\nabla (\varrho_A \wedge T)|=1$ on $\{\varrho_A <T\}$ and $=0$ on $\{\varrho_A \geq T\}$. Therefore the conditions in the definition are satisfies with $m_b=0$ and $m_c=$ Lebesgue measure.
\end{example}

\begin{example}\label{ex:disclapl}
Consider $X=\ZZ^d$, $d\geq 1$, with counting measure and
\[ \calE(u) := \sum_{|x-y|=1} |u(x)-u(y)|^2, \qquad \calD=\ell^2(\ZZ^d). \]
(Here $|x-y|$ denotes the distance induced from that in $\RR^d$.) We claim that $\varrho(x,y):=(1/\sqrt{2d})|x-y|$ defines an intrinsic metric for $\calE$ with $m_b=$ counting measure and $m_c=0$. To prove this, we note that if $|x-z|=1$ then $|\varrho_A(x) \wedge T - \varrho_A(z)\wedge T|\leq |\varrho_A(x) - \varrho_A(z)| \leq 1/\sqrt{2d}$. Since any $z$ has $2d$ neighbors $x$, we conclude that for any $z$, $\mu^{(b)}(\varrho_A\wedge T)(\{z\}) = \sum_{|x-z|=1} |\varrho_A(x) \wedge T - \varrho_A(z)\wedge T|^2 \leq 1$,
which proves the claim.
\end{example}

We emphasize that we require $\varrho_A \wedge T$ for an intrinsic metric $\rho$ to be continuous with respect to the topology generated by the underlying metric $d$, but we do not require $d_A \wedge T$ to be continuous with respect to the topology generated by $\varrho$. In general these two topologies do not coincide; see Example \ref{ex:topo} below.

Let us collect some simple properties of intrinsic metrics.

\begin{proposition}\label{distprop}
 Let $\varrho$ be an intrinsic metric and let $A\subset X$ be such that $\varrho_A(x) < \infty$ for all $x \in X$. Then $\varrho_A \in \calD_{loc}^* \cap C(X)$ and $\mu^{(d)}(\varrho_A) \leq m$.
\end{proposition}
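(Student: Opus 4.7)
The plan is to realize $\varrho_A$ as the pointwise monotone limit of the truncations $\varrho_A \wedge T$ as $T \to \infty$, and then invoke Lemma \ref{lem:3.5} to pass all of the required properties through to the limit. The hypothesis $\varrho_A(x) < \infty$ for all $x \in X$ is what makes this limit meaningful pointwise everywhere.

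First I would establish that $\varrho_A \in C(X)$. Fix $x_0 \in X$ and pick $T > \varrho_A(x_0)$. Since $\varrho_A \wedge T$ is continuous by the definition of intrinsic metric and equals $\varrho_A(x_0) < T$ at $x_0$, there is a neighborhood $U$ of $x_0$ on which $\varrho_A \wedge T < T$, i.e.\ on which $\varrho_A \wedge T$ and $\varrho_A$ coincide. Hence $\varrho_A$ is continuous at $x_0$. In particular $\varrho_A$ is locally bounded, so $\varrho_A \in L^\infty_{loc}(X)$.

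Next I would apply Lemma \ref{lem:3.5} with $u := \varrho_A$ and $m_1 := m_b$. The truncations $u_T = (u \wedge T) \vee (-T)$ equal $\varrho_A \wedge T$ (since $\varrho_A \geq 0$), which by the definition of intrinsic metric lie in $\calD_{loc}^*$ and satisfy $\mu^{(b)}(\varrho_A \wedge T) \leq m_b$ uniformly in $T$. The lemma therefore yields $\varrho_A \in \calD_{loc}^*$, together with $\mu^{(b)}(\varrho_A) \leq m_b$ and vague convergence $\mu^{(*)}(\varrho_A \wedge T) \to \mu^{(*)}(\varrho_A)$ for $*=a,b,c,d$.

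Finally, from the intrinsic metric condition $\mu^{(c)}(\varrho_A \wedge T) \leq m_c$ and the vague convergence just obtained, I can pass to the limit: for any nonnegative $f \in C_c(X)$,
\[
 \int f \, \rmd\mu^{(c)}(\varrho_A) = \lim_{T\to\infty}\int f \,\rmd\mu^{(c)}(\varrho_A \wedge T) \leq \int f \,\rmd m_c,
\]
which gives $\mu^{(c)}(\varrho_A) \leq m_c$. Adding the two bounds and using $m_b + m_c \leq m$ yields $\mu^{(d)}(\varrho_A) = \mu^{(b)}(\varrho_A) + \mu^{(c)}(\varrho_A) \leq m$, as required.

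There is no serious obstacle here; the only point that requires a moment's thought is the continuity of $\varrho_A$, since $\varrho$ itself need not be continuous with respect to the original topology. Finiteness of $\varrho_A$ everywhere is exactly what lets us pick, near each point, a truncation level $T$ that is not saturated, so that continuity transfers from $\varrho_A \wedge T$ to $\varrho_A$. The rest is a bookkeeping application of Lemma \ref{lem:3.5}.
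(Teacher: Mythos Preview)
Your proof is correct and follows essentially the same approach as the paper's: establish continuity from that of the truncations, then invoke Lemma~\ref{lem:3.5} to push the $\calD_{loc}^*$ membership and the energy-measure bounds from $\varrho_A \wedge T$ to $\varrho_A$. The paper is terser---it applies the lemma directly to $\mu^{(d)}$ rather than handling $\mu^{(b)}$ and $\mu^{(c)}$ separately---but the substance is the same.
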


\begin{proof}
The continuity of $\varrho_A$ follows from the continuity of $\varrho_A\wedge T$ for any $T$. By the definition of an intrinsic metric, we have $\varrho_A \wedge T \in \calD_{loc}^*$ and $\mu^{(d)}(\varrho_A \wedge T) \leq m_b+m_c=m$ for any $T>0$. Hence the assertion follows using Lemma \ref{lem:3.5}.
\end{proof}

The next definition is standard.
\begin{definition} \label{cutoff} Let $\varrho$ be an intrinsic metric. Let $E\subset X$ and $a > 0$ be given.
 \begin{itemize}
 \item[(a)] The \emph{cut-off function} associated to $E$ with range $a$ is given by
\[ \eta_{E, a}(x) := (1 - \varrho_E(x) / a)^+. \]
 \item[(b)] The \emph{intrinsic ball} around $E$ with radius $a$ is given by
  \[ B_r(E) := \{x \in X : \varrho_E(x) \leq r \}. \]
 \item[(c)] The \emph{intrinsic boundary} of a set $E$ is given by
 \[ A_r(E) := B_r(E) \cap B_r(E^c). \]
 \end{itemize}
\end{definition}

\begin{proposition}\label{cutoffprop}
Let $\varrho$ be an intrinsic metric, $E\subset X$ and $a>0$. Then $\eta_{E, a}\in \calD_{loc}^* \cap C(X)$ and $\mu^{(d)}(\eta_{E, a}) \leq (1/a^2) m$. Moreover, if $B_a(E)$ is relatively compact, then $\eta_{E, a} \in \calD \cap C_c(X)$.
\end{proposition}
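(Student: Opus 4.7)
The plan is to reduce everything to the hypotheses of the intrinsic metric, by observing the identity
\[
 \eta_{E,a}(x) \;=\; 1 - a^{-1}\bigl(\varrho_E(x) \wedge a\bigr),
\]
which follows from a one-line case distinction: if $\varrho_E(x)\le a$ then both sides equal $1 - \varrho_E(x)/a$, and if $\varrho_E(x)>a$ then both sides equal $0$. Applying the definition of intrinsic metric to $A=E$ and $T=a$ gives $\varrho_E\wedge a\in \calD_{loc}^*\cap C(X)$ with $\mu^{(b)}(\varrho_E\wedge a)\le m_b$ and $\mu^{(c)}(\varrho_E\wedge a)\le m_c$. Since constants lie in $\calD_{loc}$ and contribute $0$ to the jump integrand, $\eta_{E,a}$ belongs to $\calD_{loc}^*\cap C(X)$.

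Next I compute the energy measure. For the jump part one has directly
\[
 \bigl(\eta_{E,a}(x)-\eta_{E,a}(y)\bigr)^{2} = a^{-2}\bigl((\varrho_E\wedge a)(x)-(\varrho_E\wedge a)(y)\bigr)^{2},
\]
so integrating against $J$ gives $\mu^{(b)}(\eta_{E,a})=a^{-2}\mu^{(b)}(\varrho_E\wedge a)\le a^{-2}m_b$. For the strongly local part, bilinearity combined with the strong locality of $\mu^{(c)}$ (which yields $\mu^{(c)}(c,f)=0$ for any constant $c$, since $c$ is constant on $\supp f$) gives $\mu^{(c)}(\eta_{E,a})=a^{-2}\mu^{(c)}(\varrho_E\wedge a)\le a^{-2}m_c$. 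Summing and using $m_b+m_c\le m$ yields the desired bound
\[
 \mu^{(d)}(\eta_{E,a}) \;\le\; a^{-2}\,m.
\]

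For the last assertion, observe that $\eta_{E,a}(x)=0$ whenever $\varrho_E(x)\ge a$, so $\supp\eta_{E,a}\subset\overline{B_a(E)}$. If $B_a(E)$ is relatively compact, then $\eta_{E,a}\in C_c(X)$, and the theorem of Section \ref{Energy} stating that any element of $\calD_{loc}^*$ with compact support belongs to $\calD$ gives $\eta_{E,a}\in\calD\cap C_c(X)$.

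There is really no serious obstacle: the only point one has to be careful about is that the algebraic manipulations (constant plus scalar multiple) are carried out at the level of $\calD_{loc}^*$ with the \emph{extended} measures $\mu^{(b)}$ and $\mu^{(c)}$, rather than at the level of $\calD$. This is fine because $\mu^{(b)}$ is defined pointwise through the quasi-continuous representative and is manifestly invariant under additive constants, while $\mu^{(c)}$ retains its strong locality after the extension to $\calD_{loc}$ (so constants are again annihilated).
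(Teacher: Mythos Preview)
Your proof is correct and close in spirit to the paper's, but the route is slightly different and in one respect cleaner. The paper first invokes Proposition~\ref{distprop} to obtain $\varrho_E\in\calD_{loc}^*$ with $\mu^{(d)}(\varrho_E)\le m$, and then passes to $\eta_{E,a}$ via the normal contraction $t\mapsto(1-t/a)^+$, citing \cite[Thm.~3.2.2]{FukushimaOT-94} for the strongly local part and the Lipschitz bound $|\eta_{E,a}(x)-\eta_{E,a}(y)|\le a^{-1}|\varrho_E(x)-\varrho_E(y)|$ for the jump part. Your route via the affine identity $\eta_{E,a}=1-a^{-1}(\varrho_E\wedge a)$ goes directly back to the \emph{definition} of intrinsic metric with $T=a$, uses only bilinearity and strong locality for $\mu^{(c)}$, and produces exact equalities $\mu^{(*)}(\eta_{E,a})=a^{-2}\mu^{(*)}(\varrho_E\wedge a)$ rather than inequalities. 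A small bonus is that you avoid the hypothesis $\varrho_E(x)<\infty$ for all $x\in X$ that Proposition~\ref{distprop} carries, so your argument works even when the pseudo-metric $\varrho$ takes the value $+\infty$. Your appeal to the theorem in Section~\ref{Energy} (elements of $\calD_{loc}^*$ with compact support lie in $\calD$) for the final statement is exactly what is needed and makes the ``obvious'' in the paper's proof explicit.
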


\begin{proof} Since $\rho_E$ is continuous, $\eta_{E,a}$ is so as well. Moreover, $\rho_E$ belongs to $\calD_{loc}^*$ by Proposition \ref{distprop}, and as Dirichlet form, $\calE$ is compatible with cut-off procedures. Hence $\eta_{E,a}\in \calD_{loc}^*$. In order to show the claimed upper bound on $\mu^{(d)}(\eta_{E, a})$ we recall that $\mu^{(c)}(\eta_{E, a}) \leq (1/a^2) \mu^{(c)}(\rho_E)$ \cite[Theorem 3.2.2.]{FukushimaOT-94} Moreover, since $|\eta_{E,a}(x)-\eta_{E,a}(y)| \leq (1/a) |\rho_E(x)-\rho_E(y)|$ we have
$\mu^{(b)}(\eta_{E,a}) \leq (1/a^2) \mu^{(b)}(\rho_E)$. Therefore, the bound $\mu^{(d)}(\varrho_E) \leq m$ from Proposition \ref{distprop} implies the bound $\mu^{(d)}(\eta_{E, a}) \leq (1/a^2) m$. The last statement is obvious.
\end{proof}

\medskip

Before presenting different methods for finding an intrinsic metric in the next section, we exhibit two useful results about intrinsic metrics. They will play an important role later when we deal with spectral theory.

\begin{lemma}\label{lem:4.8}
 Let $\varrho$ be an intrinsic metric. Then
 \[ \sint{E \times X - d} \varrho^2(x, y) \: J(\rmd x, \rmd y) \leq m_b(E) \]
for any measurable set $E\subset X$.
\end{lemma}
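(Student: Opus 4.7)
The idea is to reduce to the defining bound $\mu^{(b)}(\varrho_A\wedge T)\le m_b$ applied with $A$ a singleton $\{y_n\}$ drawn from a dense sequence, combined with a partitioning trick. Morally we would like to set $A=\{z\}$ inside the integral over $z$ so that $\varrho_A(z)=0$ and $\varrho_A(x)=\varrho(x,z)$, but since $A$ must be fixed in the hypothesis, we approximate this diagonal choice by a $\delta$-net and let $\delta\to 0$.

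First I would remove the truncation by applying Lemma~\ref{lem:3.5} with $u=\varrho_A$ and $m_1=m_b$. The hypotheses are exactly those provided by the definition of an intrinsic metric, and $\varrho_A\in L^\infty_{loc}$ where finite by continuity of $\varrho_A\wedge T$. This yields $\varrho_A\in\calD_{loc}^*$ and
$$ \int_{F\times X-d}(\varrho_A(x)-\varrho_A(z))^2\,J(\rmd x,\rmd z)\le m_b(F) $$
for every measurable $F\subset X$ and every $A\subset X$.

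Now fix $\delta>0$ and pick a countable dense sequence $(y_n)\subset X$. The sets $U_n:=\{x\in X:\varrho(x,y_n)<\delta\}$ are open, since $\varrho_{\{y_n\}}\wedge(2\delta)\in C(X)$; by density of $(y_n)$ and continuity of $\varrho(\cdot,y_n)$ they cover $X$ up to a $J$-negligible set of points $\varrho$-isolated from every $y_n$ (negligibility follows from the previous display with $A=\{y_n\}$ by letting $T\to\infty$). Set $E_n:=U_n\setminus\bigcup_{k<n}U_k$; these are pairwise disjoint and measurable, and $\varrho(x,y_n)<\delta$ for every $x\in E_n$. Applying the previous display with $A=\{y_n\}$ and $F=E\cap E_n$ and summing over $n$, the right-hand side collapses to $m_b(E)$ by $\sigma$-additivity, yielding
$$ \int_{E\times X-d}\sum_n\One_{E_n}(x)\,(\varrho(x,y_n)-\varrho(z,y_n))^2\,J(\rmd x,\rmd z)\le m_b(E). $$

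For $x\in E_n$ and $z$ with $\varrho(x,z)\ge 2\delta$, the triangle inequality combined with $\varrho(x,y_n)<\delta$ forces $\varrho(z,y_n)\ge\varrho(x,z)-\delta$, and hence $|\varrho(x,y_n)-\varrho(z,y_n)|\ge\varrho(x,z)-2\delta$. Substituting this pointwise lower bound into the previous display gives
$$ \int_{E\times X-d}\bigl((\varrho(x,z)-2\delta)_+\bigr)^2\,J(\rmd x,\rmd z)\le m_b(E), $$
and the claim follows on letting $\delta\downarrow 0$ by monotone convergence, since $((r-2\delta)_+)^2\uparrow r^2$ for each $r\ge 0$. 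The main conceptual obstacle is simulating an integrand of the form $(\varrho_{A(z)}(x)-\varrho_{A(z)}(z))^2$ with a $z$-dependent $A(z)$ by a sequence of fixed-$A$ inequalities; the partition argument is precisely what permits this while keeping the right-hand side bounded by a single copy of $m_b(E)$ rather than a divergent sum.
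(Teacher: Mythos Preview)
Your proof is correct and follows the same core idea as the paper's---cover $E$ by intrinsic balls of small radius, use the triangle inequality to compare $\varrho(x,z)$ with $|\varrho_{\{\tilde x\}}(x)-\varrho_{\{\tilde x\}}(z)|$ for the ball center $\tilde x$, and pass to the limit---but your execution is tighter. The paper introduces three auxiliary parameters $\varepsilon,s,\delta$, first restricts to compact $E$ covered by \emph{finitely} many balls $B_\varepsilon(\{x_i\})$ with $x_i\in E$, and obtains an inequality of the form $\int_{\varrho>s}\varrho^2\,dJ\le (1+\delta+O(\varepsilon^2/s^2))\,m_b(E)$ before taking three successive limits and invoking inner regularity. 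You instead use a single parameter $\delta$, a countable partition (so no compactness reduction is needed), and the clean pointwise lower bound $((\varrho-2\delta)_+)^2$, finishing with one monotone-convergence step. Two small points of care: (i) your appeal to Lemma~\ref{lem:3.5} tacitly assumes $\varrho_{\{y_n\}}$ is finite; it is simpler to bypass the lemma and take $T\to\infty$ by monotone convergence directly in the inequality $\mu^{(b)}(\varrho_{\{y_n\}}\wedge T)\le m_b$, which works since the truncated integrand is nondecreasing in $T$; (ii) the sets $U_n$ actually cover \emph{all} of $X$ (for each $x$ the map $z\mapsto\varrho(x,z)=\varrho_{\{x\}}(z)$ is continuous with value $0$ at $z=x$, so the open set $\{z:\varrho(x,z)<\delta\}$ meets the dense sequence), so your caveat about a $J$-negligible exceptional set and its justification are unnecessary.
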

\begin{proof}
 Let $\varepsilon > 0$ and $s > 2 \varepsilon$ be arbitrary. We first consider sets $E$ with $E \subset B_\varepsilon(\tilde{x})$ for some $\tilde{x}$. Using the fact that for $x\in E$
 \[ \varrho(x,y) \leq \varrho(y,\tilde x) - \varrho(x,\tilde x) + 2 \varrho(x,\tilde x) \leq |\varrho(y,\tilde x) - \varrho(x,\tilde x) | + 2 \varepsilon \]
 we can estimate for every $\delta>0$
 \[ \sint{\substack{E \times X \\ \varrho(x, y) > s}} \varrho^2(x, y) \: J(\rmd x, \rmd y)
\leq (1 + \delta) \sint{E \times X - d} (\varrho(x, \tilde{x}) - \varrho(y, \tilde{x}))^2 \: J(\rmd x, \rmd y) + (1 + \frac{1}{\delta})\, 4 \varepsilon^2 \sint{\substack{E \times X \\ \varrho(x, y) > s}} \rmd J. \]
 The first term on the right side is controlled since by the definition of an intrinsic metric and by Lemma \ref{lem:3.5} we have
 \[ \sint{E \times X - d} (\varrho(x, \tilde{x}) - \varrho(y, \tilde{x}))^2 \: J(\rmd x, \rmd y) \leq m_b(E). \]
 In order to controll the second term on the right side we estimate for $x\in E$ and $y\in X$ with $\varrho(x,y)>s$
 \[ \varrho(y,\tilde x) - \varrho(x,\tilde x) \geq \varrho(y,x) - 2 \varrho(x,\tilde x) \geq s- 2\epsilon, \]
 which yields
 \[ \sint{\substack{E \times X \\ \varrho(x, y) > s}} \rmd J
 \leq \frac{1}{(s - 2 \varepsilon)^2} \int_{E \times X - d} (\varrho(x, \tilde{x}) - \varrho(y, \tilde{x}))^2 \: J(\rmd x, \rmd y). \]
 Putting these estimates together we infer that
 \begin{align*}
 \sint{\substack{E \times X \\ \varrho(x, y) > s}} \varrho^2(x, y) \: J(\rmd x, \rmd y)
 \leq \left(1 + \delta + \left(\frac{2 \varepsilon}{s - 2 \varepsilon}\right)^2 (1 + \frac{1}{\delta})\right) m_b(E).
 \end{align*}

 With this estimate at hand, we can now pass to arbitrary compact sets $E$. An arbitrary compact $E$ can be covered by finitely many disjoint sets $E_n$, each one being contained in an intrinsic ball of radius $\epsilon$. (Indeed, by compactness $E$ can be covered by finitely many of the balls $B_\epsilon(\{x\})$, $x\in E$; now make these disjoint by removing from the $n$-th set the union of the first $n-1$ as well as $E^c$.) In this way, the previous estimate extends to arbitrary compact $E$. Letting first $\varepsilon \to 0$, then $\delta \to 0$ and finally $s\to 0$ we obtain the desired estimate for all compact sets $E$. The general case follows from regularity.
\end{proof}

The next theorem, sometimes referred to as Rademacher Theorem, is well known for the usual Sobolev spaces on Euclidean space. For general strongly local Dirichlet forms it seems to be new. It has already proven useful in Stollmann's work on path spaces associated to Dirichlet forms \cite{Stollmann-09}. For related material in the context of some infinite dimensional spaces we refer to \cite{RoecknerSch-99, Schied-02}.

\begin{theorem}\label{thm:cool}
 Let $\varrho$ be an intrinsic metric. Then every $u: X \rightarrow \RR$ with $|u(x) - u(y)| \leq \varrho(x, y)$ satisfies $u \in \calD_{loc}^*$ and $\mu^{(d)}(u) \leq m$.
\end{theorem}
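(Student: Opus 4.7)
The plan is to verify the two defining properties of $\calD_{loc}^*$ together with the bound $\mu^{(d)}(u)\leq m$, handling the jump and strongly local contributions separately. As a preliminary reduction, I would observe that $u$ is continuous for the original topology (since $\varrho_{\{x_0\}}\wedge 1 \in C(X)$ vanishes at $x_0$, the Lipschitz bound $|u(\cdot)-u(x_0)|\leq \varrho(\cdot,x_0)$ forces continuity of $u$ at $x_0$) and hence $u\in L^\infty_{loc}(X)$, and that the truncation $u_T := (u\wedge T)\vee(-T)$ is again $1$-Lipschitz with respect to $\varrho$, so Lemma \ref{lem:3.5} reduces the whole theorem to the case $u\in L^\infty(X)$, to which I now specialize.

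The jump part I would handle directly: by the Lipschitz bound and Lemma \ref{lem:4.8}, for every compact $K\subset X$
\[
\sint{K \times X - d} (u(x)-u(y))^2\,J(\rmd x,\rmd y) \leq \sint{K \times X - d} \varrho(x,y)^2\,J(\rmd x,\rmd y) \leq m_b(K),
\]
which supplies both the integral condition in the definition of $\calD_{loc}^*$ and the bound $\mu^{(b)}(u)\leq m_b$.

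For the strongly local part I would invoke a McShane-type representation. Since $u$ is $1$-Lipschitz w.r.t.\ $\varrho$ and $\varrho(x,x)=0$, $u(x)=\inf_{y\in X}(u(y)+\varrho(x,y))$; and since $u$ is bounded, for any $T\geq 2\|u\|_\infty$ the truncated version $u(x)=\inf_{y\in D}(u(y)+\varrho(x,y)\wedge T)$ remains valid, where $D\subset X$ is any countable dense subset (the bound on $T$ preserves the inequality $u(x)\leq u(y)+\varrho(x,y)\wedge T$ even when $\varrho(x,y)>T$, continuity of $u$ and of $\varrho_{\{y\}}\wedge T$ gives the sup-attainment along $D$, and this simultaneously sidesteps the case $\varrho=\infty$). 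Each $f_y := u(y)+\varrho(\cdot,y)\wedge T$ differs from $\varrho_{\{y\}}\wedge T$ by a constant, so it lies in $\calD_{loc}^*\cap C(X)$ with $\mu^{(c)}(f_y)\leq m_c$ straight from the definition of an intrinsic metric. The truncation property of $\mu^{(c)}$ propagates this bound inductively to the finite infima $u_n := f_{y_1}\wedge\cdots\wedge f_{y_n}$, which are themselves $1$-Lipschitz w.r.t.\ $\varrho$ and so by the previous paragraph also satisfy $\mu^{(b)}(u_n)\leq m_b$. For a relatively compact open $G$ I would then pick $\phi\in\calD\cap C_c(X)$ with $\phi\equiv 1$ on $G$, expand $\calE_1(u_n\phi)$ via a Leibniz-type estimate, and control it uniformly in $n$ using $\|u_n\|_\infty\leq\|u\|_\infty$, the uniform bound $\mu^{(d)}(u_n)\leq m$ on $\supp\phi$, and the finiteness of $\mu^{(d)}(\phi)$. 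Combined with $L^2$ convergence $u_n\phi\to u\phi$ (via local uniform convergence and bounded support), weak compactness in $(\calD,\sqrt{\calE_1})$ then yields $u\phi\in\calD$, and hence $u\in\calD_{loc}$. Finally, for every non-negative $f\in C_c(G)$ the locality of $\mu^{(c)}$ gives $\int f\,\rmd\mu^{(c)}(u_n\phi)=\int f\,\rmd\mu^{(c)}(u_n)\leq \int f\,\rmd m_c$, and the weak lower semicontinuity of the non-negative quadratic form $v\mapsto\int f\,\rmd\mu^{(c)}(v)$ on $(\calD,\calE_1)$ transfers this bound to $\int f\,\rmd\mu^{(c)}(u)\leq\int f\,\rmd m_c$, whence $\mu^{(c)}(u)\leq m_c$ on $G$ and, letting $G$ vary, on all of $X$.

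The hard part will be the limit passage from $u_n$ to $u$ in the strongly local estimate: it requires first a careful uniform $\calE_1$-bound for the localized sequence $u_n\phi$ obtained via Leibniz (simultaneously using both the strongly local and the jump bounds on $u_n$), and then the invocation of weak lower semicontinuity of the quadratic forms $v\mapsto\int f\,\rmd\mu^{(c)}(v)$ in place of Proposition \ref{prop:continuity}, whose hypothesis of a fixed second argument is not directly applicable in a setting where both entries of $\mu^{(c)}$ would depend on $n$.
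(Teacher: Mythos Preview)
Your argument is correct and follows the same overall architecture as the paper's proof: reduce to bounded $u$, approximate $u$ by functions manufactured from the intrinsic metric that carry the uniform bounds $\mu^{(c)}\leq m_c$ and $\mu^{(b)}\leq m_b$, localize with a cut-off $\phi$, extract a weak $\calE_1$-limit, and handle the jump part directly via Lemma~\ref{lem:4.8}. The differences are in the implementation. The paper approximates from \emph{below} by finite maxima of shifted distances to super-level sets,
\[
u_n^* := \max_{m=1,\ldots,nM}\Bigl(\tfrac{m}{n}-\varrho_{\{u\geq m/n\}}\Bigr)^+,
\]
and localizes by taking $u_n^*\wedge M\phi$; you approximate from \emph{above} via the McShane formula $u=\inf_{y\in D}\bigl(u(y)+\varrho_{\{y\}}\wedge T\bigr)$ with finite infima, and localize by multiplying with $\phi$. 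Both yield uniformly bounded, $1$-Lipschitz approximants with $\mu^{(d)}\leq m$, so the uniform $\calE_1$-bound on the localized sequence goes through by the same Leibniz/product estimates. For the passage to the limit in $\mu^{(c)}$, the paper expands $0\leq\int f\,\rmd\mu^{(c)}(u_n-v)$ and invokes Proposition~\ref{prop:continuity} with the fixed second argument $v=u\phi$; your direct appeal to weak lower semicontinuity of the non-negative quadratic form $v\mapsto\int f\,\rmd\mu^{(c)}(v)$ (dominated by $\|f\|_\infty\calE_1$) is equivalent and sidesteps the explicit statement of Proposition~\ref{prop:continuity}. Neither route is materially shorter, but your McShane representation is perhaps a touch more transparent in explaining why the approximants inherit the energy-measure bounds.
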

\begin{proof}
 As a preliminary observation we note that for any relatively compact $G \subset X$, there exists  a compact $K\supset G$ with
\[ \int_{G\times K^c} dJ (x,y)< \infty.\;\:\; (*) \]
(Just use finiteness of $\calE(\phi)$ for $\phi \in C_c (X)\cap \calD$ with $\phi \equiv 1$ on $G$ and set $K$ to be the support of $\phi$.)

\medskip

Let us now turn to the actual proof. By Lemma \ref{lem:3.5} we can assume $0 \leq u \leq M$. Define
\[ u^*_n := \max_{m=1,\ldots,(n \cdot M)} \left( \frac{m}{n} - \varrho_{\{x \mid u(x) \geq \frac{m}{n} \}} \right)^+. \]
 Then $u^*_n$ belongs to $ \calD_{loc}^*\cap C (X)$ as it is a maximum of finitely many functions in $\calD_{loc}^*\cap C (X)$ and it converges to $u$ with respect to the supremum norm by assumption on the continuity property of $u$. Moreover, we have $\mu^{(c)}(u^*_n) \leq m_c$ by the truncation property and we have $\mu^{(b)}(u^*_n) \leq m_b$ by Lemma \ref{lem:4.8} (as $|u^*_n (x) - u^*_n (y)| \leq \varrho (x,y)$ by the very definition of $u^*_n$).

\medskip

We will show that $u$ belongs to $\calD_{loc}$ and that $\mu^{(d)}(u) \leq m$ on $G$. (This will then implicitely give that $u\in \calD_{loc}^*$ as well.) Let $G \subset X$ be open and relatively compact. For $\phi \in C_c(X) \cap \calD$ with $\phi \equiv 1$ on $G$ define $u_n := u^*_n \wedge M \phi$.

\medskip

\textit{Claim.} The functions $u_n$ belong to $\calD$ and $\calE_1(u_n)$ is bounded in $n$.

Proof of claim. It suffices to show uniform (in $n$) boundednes of $\mu^{(*)} (u_n) (X)$ for $*=a,b,c,d$:
Note that the $u_n$ are uniformly bounded by $M$ and have all support contained in the compact support of $\phi$. This easily gives the desired boundedness for $* = a$ and by the truncation property it also gives the desired boundedness for $* = c$. It remains to consider the case $* = b$.
 Choose a compact $K\subset X$ with $\supp \phi\subset K$ according to $(*)$ with
\[ \int _{K^c \times \supp \phi} d J (x,y) = C <\infty. \]
Then, we can estimate
\begin{footnotesize}
 \[ \sint{X \times X} (u_n(x) - u_n(y))^2 \:J(\rmd x, \rmd y) = \sint{K \times X} (u_n(x) - u_n(y))^2 \:J(\rmd x, \rmd y) + \sint{K^c \times X} (u_n(x) - u_n(y))^2 \:J(\rmd x, \rmd y). \]
\end{footnotesize}
We estimate the two terms on the right hand side. As $u_n$ is the minimum of $u_n^*$ and $M\phi$, we have
\[ (u_n (x) - u_n (y))^2 \leq 2 (u_n^* (x) - u_n^* (y))^2 + 2 (M \phi (x) - M \phi (y))^2 \]
and can hence bound the first term above by
\begin{small}
\[ 2 \sint{K \times X} (u_n^*(x) - u_n^*(y))^2 \:J(\rmd x, \rmd y) + 2 M^2 \sint{K \times X} (\phi(x) - \phi(y))^2 \:J(\rmd x, \rmd y) \leq 2 m(K) + 2 M^2 \calE (\phi) \]
\end{small}
independently of $n$. Moreover, $\supp u_n \subset \supp \phi \subset K$ and hence the second term can be bounded above by
\begin{align*}
 \int_{K^c \times X} (u_n (x) - u_n (y))^2 d J (x,y) &= \int_{K^c \times X} u_n (y)^2 d J(x,y) \\
 &= \int_{K^c \times \supp \phi} u_n (y)^2 dJ (x,y) \\
 &\leq M \int_{K^c \times \supp \phi} dJ (x,y) = M C
\end{align*}
independently of $n$. This finishes the proof of the claim.

\bigskip

 As $(u_n)$ is bounded with respect to $\calE_1$, it has a weakly $\calE_1$-converging subsequence. As $(u_n)$ obviously converges to $v = u \wedge M\phi$ in $L^2$ we infer $v\in \calD$ as well as weak $\calE_1$-convergence of $(u_n)$ to $v$. Since the relatively compact $G$ and $\phi$ with $\phi\equiv 1$ on $G$ are arbitary, this shows, in particular, that $u$ belongs to $\calD_{loc}$.

\smallskip

 It remains to show that $\mu^{(d)}(u) \leq m$ on $G$. First we have $\mu^{(b)}(u) \leq m_b$ by Lemma \ref{lem:4.8}, since $|u(x) - u(y)| \leq \varrho (x,y)$.

 Choose $\phi \in C_c(X) \cap \calD$ with $\phi \equiv 1$ on $G$. Define $u_n$, $v$ as before. Choose some test function $f \in C_c(G) \cap \calD$, $f \geq 0$.
 Since $u_n$ agrees locally with cut-off functions, we obtain by the truncation property
 \[ \int f \rmd \mu^{(c)}(u_n) \leq \int f \rmd m_c. \]
 Finally, we obviously have $0 \leq\int f \rmd \mu^{(c)} (u_n - v)$ and hence
 \[ 2 \int f \rmd \mu^{(c)}(u_n, v) - \int f \rmd \mu^{(c)}(v, v) \leq \int f \rmd \mu^{(c)}(u_n). \]
 As the $u_n$ converge $\calE_1$ weakly to $v$ we can now use Proposition \ref{prop:continuity} to obtain by putting these estimates together
 \begin{align*}
 \int f \rmd \mu^{(c)}(u) &\leq \int f \rmd \mu^{(c)}(v) = \lim_{n \rightarrow \infty} \left( 2 \int f \rmd \mu^{(c)}(u_n, v) - \int f \rmd \mu^{(c)}(v)\right) \\
  &\leq \liminf_{n \rightarrow \infty} \int f \rmd \mu^{(c)}(u_n) \leq \int f \rmd m_c. \\
 \end{align*}
 This gives $\mu^{(c)}(u) \leq m_c$ and the desired result.
\end{proof}

The theorem gives a stability property of intrinsic metrics.

\begin{corollary}\label{cor:IM_order}
 Let $\varrho_1$ be a pseudo-metric, $\varrho_2$ be an intrinsic metric, and $\varrho_1 \leq \varrho_2$. Then $\varrho_1$ is an intrinsic metric.
\end{corollary}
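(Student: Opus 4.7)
The plan is to reduce everything to the Rademacher-type Theorem~\ref{thm:cool} applied to the function $u = (\varrho_1)_A \wedge T$, viewed as a Lipschitz function with respect to the larger (already intrinsic) metric $\varrho_2$. The three conditions in the definition of an intrinsic metric — membership in $\calD_{loc}^* \cap C(X)$, and the bounds on $\mu^{(b)}$ and $\mu^{(c)}$ by $m_b$ and $m_c$ — will then fall out, with the two measure Radon measures $m_b$ and $m_c$ inherited from $\varrho_2$.

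First, I would fix an arbitrary $A\subset X$ and $T>0$ and set $u := (\varrho_1)_A \wedge T$. Using the general estimate for pseudo-metrics recalled at the start of Section~\ref{Intrinsicmetric}, together with $\varrho_1 \leq \varrho_2$, I obtain
\[
|u(x) - u(y)| = |(\varrho_1)_A(x)\wedge T - (\varrho_1)_A(y)\wedge T| \leq \varrho_1(x,y) \leq \varrho_2(x,y).
\]
Next I would verify continuity of $u$ with respect to the original topology of $X$. Since $\varrho_2$ is intrinsic, $(\varrho_2)_{\{y\}}\wedge T$ is continuous for every $y$, so $\varrho_2(x_n,y) \to 0$ whenever $x_n \to y$ in $X$; combined with the previous Lipschitz estimate this yields continuity of $u$. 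Hence $u \in C(X)$, which is the first required property.

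Finally, I would invoke Theorem~\ref{thm:cool} applied to the intrinsic metric $\varrho_2$ and the function $u$ satisfying $|u(x)-u(y)|\leq \varrho_2(x,y)$: this gives $u \in \calD_{loc}^*$. For the sharper bounds $\mu^{(b)}(u)\leq m_b$ and $\mu^{(c)}(u)\leq m_c$ (rather than just the combined bound $\mu^{(d)}(u)\leq m$ as stated), I would point to the proof of Theorem~\ref{thm:cool}, where $\mu^{(b)}(u)\leq m_b$ is obtained directly from Lemma~\ref{lem:4.8} (which only uses $|u(x)-u(y)|\leq \varrho_2(x,y)$), and $\mu^{(c)}(u) \leq m_c$ is established separately via the truncation property and the weak $\calE_1$-approximation argument. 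Since $A$ and $T$ were arbitrary, $\varrho_1$ satisfies the definition of an intrinsic metric with the same pair $(m_b,m_c)$ as $\varrho_2$.

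The only subtle point — and the one worth flagging — is the continuity of $u$ in the original topology, since pseudo-metrics need not a priori be continuous. This is resolved by the Lipschitz domination by $\varrho_2$; once continuity is in hand, the rest is essentially a direct appeal to Theorem~\ref{thm:cool} and Lemma~\ref{lem:4.8}, so there is no real obstacle beyond this transfer-of-regularity step.
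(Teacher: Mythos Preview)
Your argument is correct and follows the same route as the paper's proof: establish the Lipschitz estimate $|(\varrho_1)_A(x)\wedge T - (\varrho_1)_A(y)\wedge T| \leq \varrho_1(x,y) \leq \varrho_2(x,y)$ and then invoke Theorem~\ref{thm:cool}. Your version is in fact more careful than the paper's on two points the paper leaves implicit: the continuity of $(\varrho_1)_A\wedge T$ in the original topology (which is indeed needed, since the definition requires membership in $C(X)$ and Theorem~\ref{thm:cool} does not assert continuity), and the separate bounds $\mu^{(b)}\leq m_b$, $\mu^{(c)}\leq m_c$ rather than just $\mu^{(d)}\leq m$ (which the statement of Theorem~\ref{thm:cool} does not give, but its proof does).
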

\begin{proof} The triangle inequality and the assumption imply that $\varrho_1$ satisfies
\[ |\varrho_{1,A} (x) - \varrho_{1,A} (y)|\leq \varrho_{1} (x,y) \leq \varrho_2 (x,y). \]
Now the desired statement follows from the previous theorem.
\end{proof}

\section{Intrinsic metrics and sets of intrinsic-metric functions}\label{Intrinsic-metric-functions}

In this section we show that specifying an intrinsic metric is essentially equivalent to specifying a suitable set of (Lipschitz) continuous functions.

\medskip

We start by introducing the relevant sets of functions.
\begin{definition}
 A set $M \subset \calD_{loc}^* \cap C(X)$ is called \emph{set of intrinsic-metric functions} if there are two Radon measures $m_b$ and $m_c$ with $m_b + m_c \leq m$ such that the following conditions are satisfied.
 \begin{itemize}
 \item $0 \in M$.
 \item $\mu^{(b)}(f) \leq m_b$ and $\mu^{(c)}(f) \leq m_c$ for all $f \in M$,
 \item $f+c \in M $ and $-f \in M$ for all $c\in \RR$ and $f\in M$.
 \item $\sup\{f_i\} \in M$, whenever $f_i \in M$, $i \in \NN$ with $\sup\{f_i\}$ finite.
 \end{itemize}
 For such a set $M$ we define
 \[ \varrho^{(M)} (x,y) := \sup_{f \in M \,\text{and}\, f(y) = 0} f(x). \]
\end{definition}

\begin{remark}
 $\varrho^{(M)}$ is a pseudo-metric. Note also that $\varrho^{(M)}$ is the smallest pseudo-metric such that any $f\in M$ satisfies $|f(x)-f(y)| \leq \varrho^{(M)}(x,y)$ for any $x,y\in X$.
\end{remark}

The main result of this section is

\begin{theorem}\label{thm:3.9}
 If $\varrho$ is an intrinsic metric, then
 \[ M := \{f : X \rightarrow \RR \mid \; |f(x) - f(y)| \leq \varrho(x, y) \} \]
 is a set of intrinsic-metric functions with $\varrho^{(M)}=\varrho$.
 Conversely, if $M$ is an arbitrary set of intrinsic-metric functions, then $\varrho^{(M)}$ is an intrinsic metric.
\end{theorem}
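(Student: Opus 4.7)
My plan is to handle the two directions of the theorem separately: the forward direction is a bookkeeping verification using the Rademacher-type Theorem \ref{thm:cool}, while the converse direction requires reconstructing $\varrho^{(M)}_A\wedge T$ as an element of $M$ via a suitable approximation.

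For the forward direction, set $M := \{f : X\to \RR \mid |f(x) - f(y)| \leq \varrho(x,y)\}$. The inclusion $M \subset \calD_{loc}^*$ together with the bounds $\mu^{(b)}(f) \leq m_b$ and $\mu^{(c)}(f) \leq m_c$ follows directly from Theorem \ref{thm:cool} (whose proof in fact delivers the two bounds separately). For $M \subset C(X)$ I use that the continuity of $\varrho_{\{z\}}\wedge T$, together with $(\varrho_{\{z\}}\wedge T)(z) = 0$, forces $\varrho(x_n,z) \to 0$ whenever $x_n\to z$ in the original topology, so $|f(x_n)-f(z)| \to 0$. Containment of $0$, closure under translations and negation are trivial, and closure under countable suprema with finite values is inherited from the inequality $|\sup_i f_i(x) - \sup_i f_i(y)| \leq \varrho(x,y)$. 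For $\varrho^{(M)} = \varrho$, one direction is the Remark after the definition, and the other follows from the test functions $z\mapsto \varrho(z,y)\wedge T \in M$ as $T \to \infty$.

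For the converse direction, the goal is to place $\varrho^{(M)}_A \wedge T$ in $M$, which automatically yields the required regularity and measure bounds. A preliminary observation is that $M$ is closed under $f\mapsto f\wedge T$ and $f\mapsto f\vee(-T)$: since constants belong to $M$ (by translating $0$) and $M$ is closed under negation and countable suprema, writing $f\wedge T = -\sup\{-f,-T\}$ settles this. I then argue in two stages. In the first stage I fix $a\in X$ and show that $\psi_a := \varrho^{(M)}(\cdot,a)\wedge T$ belongs to $M$: by the above closure one has $\psi_a(x) = \sup\{g(x) : g\in M,\ g(a) = 0,\ g\leq T\}$. Picking a countable dense $D\subset X$ and, for each $d\in D$, a sequence $g_{d,n}$ in this class with $g_{d,n}(d)\to \psi_a(d)$, the countable supremum $\psi_a^* := \sup_{d,n} g_{d,n}$ belongs to $M$ (it is bounded by $T$) and agrees with $\psi_a$ on $D$. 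A continuity/density argument---assuming $g(x) > \psi_a^*(x) + \varepsilon$ for some competitor $g$, using continuity of $g$ and $\psi_a^*$ to transfer this strict inequality to a nearby $d\in D$, and contradicting $g(d)\leq \psi_a(d) = \psi_a^*(d)$---upgrades this to $\psi_a^* = \psi_a$ pointwise.

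In the second stage I write $\varrho^{(M)}_A\wedge T = \inf_{a\in A}\psi_a$. For each $d\in D$ I choose $a_{d,n}\in A$ with $\psi_{a_{d,n}}(d) \to \inf_{a\in A}\psi_a(d)$ and set $\chi := \inf_{(d,n)}\psi_{a_{d,n}}$; since $-\chi$ is a countable supremum in $M$ of functions taking values in $[-T,0]$, we get $\chi\in M$. By construction $\chi\geq \varrho^{(M)}_A\wedge T$ pointwise with equality on $D$. The main obstacle is upgrading equality from $D$ to all of $X$, since $\varrho^{(M)}_A\wedge T$ is not a priori continuous. I resolve this by a sandwich argument: given $z\in X$ and $\varepsilon>0$, choose $a_0\in A$ with $\psi_{a_0}(z) \leq (\varrho^{(M)}_A\wedge T)(z) + \varepsilon$; the continuity of $\psi_{a_0}$ then yields $(\varrho^{(M)}_A\wedge T)(d) \leq \psi_{a_0}(d) \leq (\varrho^{(M)}_A\wedge T)(z) + 2\varepsilon$ for $d$ in a neighbourhood of $z$ intersected with $D$; letting $d\to z$ via continuity of $\chi$ (together with $\chi = \varrho^{(M)}_A\wedge T$ on $D$) and then $\varepsilon\to 0$ gives $\chi(z) = (\varrho^{(M)}_A\wedge T)(z)$. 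Hence $\varrho^{(M)}_A\wedge T \in M$, and the axioms of $M$ deliver the required measure bounds.
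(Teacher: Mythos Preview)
Your proof is correct. The forward direction matches the paper's, which simply invokes Theorem \ref{thm:cool}; you fill in the remaining axiom checks (continuity, closure under sup, and $\varrho^{(M)}=\varrho$) explicitly.

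For the converse you take a genuinely different route. The paper first proves directly that $\varrho^{(M)}$ is continuous (a short contradiction using the sup-closure axiom), then establishes the representation $\varrho^{(M)}_A(x) = \sup\{f(x) : f\in M,\ f(A)=0\}$ in one line by testing with $g := (\varrho^{(M)}_A(x) - \varrho^{(M)}(\cdot,x))\vee 0 \in M$, and finally shows $\varrho^{(M)}_A \wedge T \in M$ by a \emph{single} countable-supremum approximation over a covering of $X$ by $\varrho^{(M)}$-balls of radius $1/n$. Your approach instead decomposes $\varrho^{(M)}_A \wedge T = \inf_{a\in A}(\varrho^{(M)}(\cdot,a)\wedge T)$ and runs two rounds of countable approximation via a dense set $D$: first a supremum to place each $\psi_a$ in $M$, then an infimum (via negation and sup-closure) to place the result in $M$. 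The paper's representation lemma is exactly the streamlining device you forego; it collapses your two stages into one and makes the sandwich argument in your Stage~2 unnecessary. On the other hand, your route never needs a separate continuity proof for $\varrho^{(M)}$, since continuity of $\varrho^{(M)}_A\wedge T$ drops out of membership in $M\subset C(X)$.
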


The first part of this theorem is a consequence of Theorem \ref{thm:cool}. The second part follows from the next two lemmas. We abbreviate $\varrho^{(M)}_A := (\varrho^{(M)})_A$ as defined in the previous section.

\begin{lemma} Let $M$ be a set of intrinsic-metric functions. Then
 \[ \varrho^{(M)}_A (x) = \sup_{f \in M \,\mathrm{and}\, f(A) = 0} f(x). \]
\end{lemma}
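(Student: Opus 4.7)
My plan is to split the asserted equality into two opposite inequalities and prove them separately, with essentially all the work concentrated in the $\leq$ direction.

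The direction $\sup_{f\in M,\ f(A)=0} f(x) \leq \varrho^{(M)}_A(x)$ is a direct definitional check: if $f\in M$ vanishes on $A$, then for every $y\in A$ the function $f$ is admissible in the supremum defining $\varrho^{(M)}(x,y)$, so $f(x) \leq \varrho^{(M)}(x,y)$; taking the infimum over $y\in A$ and then the supremum over $f$ gives the inequality.

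For the reverse inequality I will fix $\beta < \varrho^{(M)}_A(x)$ and construct an admissible $f$ with $f(x)\geq\beta$. Since the zero function is always admissible, I may assume $\beta \geq 0$. Because $\varrho^{(M)}(x,y)>\beta$ for every $y\in A$, each such $y$ yields some $f_y\in M$ with $f_y(y)=0$ and $f_y(x)>\beta$. I plan to truncate by setting $\tilde f_y := (f_y \vee 0) \wedge \beta$, which still lies in $M$ because the axioms (translation by constants, negation, and countable suprema) imply closure under pairwise $\max$ and $\min$ with constants; the truncation satisfies $0\leq \tilde f_y\leq \beta$, $\tilde f_y(y)=0$, and $\tilde f_y(x)=\beta$.

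The candidate witness will then be the pointwise envelope $F := \inf_{y\in A}\tilde f_y$: by construction $F(y')\leq\tilde f_{y'}(y')=0$ for every $y'\in A$ and $F\geq 0$, so $F|_A\equiv 0$, while $F(x)=\beta$ since every $\tilde f_y$ attains the value $\beta$ at $x$. The main obstacle will be to place $F$ itself in $M$, since the axioms guarantee only \emph{countable} closure operations. To deal with this I will use that the family $\{\tilde f_y\}_{y\in A}$ is uniformly bounded in $[0,\beta]$ and equi-Lipschitz with respect to $\varrho^{(M)}$ (indeed $|\tilde f_y(z)-\tilde f_y(y)|\leq \varrho^{(M)}(z,y)$), combined with the separability of $X$, to extract a countable subfamily $(y_n)\subset A$ whose infimum still vanishes on $A$ and takes value $\beta$ at $x$. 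Closure of $M$ under countable infima then produces an element of $M$ witnessing $\sup_{f\in M,\ f(A)=0}f(x)\geq\beta$, and letting $\beta\uparrow\varrho^{(M)}_A(x)$ finishes the proof.
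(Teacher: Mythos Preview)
Your argument is correct in outline but takes a different route from the paper. For the $\leq$ direction the paper does not assemble a witness as an infimum over $A$; it writes one down directly: with $T:=\varrho^{(M)}_A(x)$ the function $g:=(T-\varrho^{(M)}(\cdot,x))\vee 0$ vanishes on $A$ (because $\varrho^{(M)}(y,x)\ge T$ for every $y\in A$) and satisfies $g(x)=T$, so once $g\in M$ the inequality follows in one stroke, with no approximation $\beta\uparrow T$ and no infimum over a family. The membership $g\in M$ amounts to $\varrho^{(M)}(\cdot,x)\wedge T\in M$, i.e., the singleton case $A=\{x\}$ of the subsequent lemma, whose proof for singletons does not rely on the present statement. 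In effect, the paper trades your explicit countable-reduction argument for a forward reference to that singleton case.

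One step in your plan deserves sharpening: you justify the countable reduction by ``equi-Lipschitz with respect to $\varrho^{(M)}$'' combined with ``separability of $X$'', but these refer to two different topologies, and as stated they do not immediately produce a countable $(y_n)\subset A$ with $\inf_n\tilde f_{y_n}|_A=0$. The cleanest fix bypasses $\varrho^{(M)}$ entirely: each $\tilde f_y\in M\subset C(X)$ is continuous in the original topology, so for each $k$ the open sets $\{\tilde f_y<1/k\}$ (indexed by $y\in A$) cover $A$, and by the Lindel\"of property a countable subfamily already does; collecting these over all $k$ yields the desired countable family. Alternatively, first establish that $\varrho^{(M)}(\cdot,z)$ is continuous at $z$ in the original topology (the opening step of the next lemma, proved independently of this one); then a countable subset of $A$ dense in the original topology is automatically $\varrho^{(M)}$-dense, and your equi-Lipschitz bound finishes. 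With either fix your approach goes through.
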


\begin{proof} We denote the right side by $\varrho^{(M)}(x,A)$.\\
 ``$\geq$'': We have $\varrho^{(M)}(x, A) \leq \varrho^{(M)}(x, y)$ for all $y \in A$. \\
 ``$\leq$'': Let $T := \varrho^{(M)}_A (x) = \inf_{y \in A} \varrho^{(M)}(x, y)$. Then $g := (T - \varrho^{(M)}(\cdot, x)) \vee 0$
satisfies $g\in M$ and $g(A)=0$. Hence using $g$ in the supremum that defines $\varrho^{(M)}(x,A)$, we find
$\varrho^{(M)}(x,A)\geq g(x)=T = \varrho^{(M)}_A (x)$.
\end{proof}

\begin{lemma}
 Let $M$ be a set of intrinsic-metric functions. Then $\varrho^{(M)}$ is continuous and $\varrho^{(M)}_A \wedge T\in M$ for all $T \in \RR$. Moreover, if $\varrho^{(M)}_A$ takes only finite values, then $\varrho^{(M)}_A$ belongs to $M$.
\end{lemma}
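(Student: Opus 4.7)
My plan is to first establish that $\varrho^{(M)}_A\wedge T\in M$ for every $T\geq 0$, from which both the continuity of $\varrho^{(M)}$ and the remaining claim about $\varrho^{(M)}_A$ follow quickly. As a preparation, observe that $f\wedge T\in M$ for any $f\in M$ and $T\geq 0$: writing $f\wedge T=-\bigl((-f)\vee(-T)\bigr)$, one sees that $-f\in M$ by the negation axiom, the constant $-T\in M$ (since $0\in M$ and $-T=0+(-T)$), the finite supremum $(-f)\vee(-T)\in M$ by the countable-supremum axiom, and the outer negation is again allowed.

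For the core step I would exploit the separability of $X$ to rewrite $\varrho^{(M)}_A\wedge T$ as a countable supremum of elements of $M$. Fix a countable dense set $X_0=\{x_n\}\subset X$. Using the characterization $\varrho^{(M)}_A(x)=\sup\{f(x):f\in M,\ f(A)=0\}$ from the previous lemma, for each $n,k\in\NN$ pick $f_{n,k}\in M$ with $f_{n,k}(A)=0$ and $(f_{n,k}\wedge T)(x_n)\geq(\varrho^{(M)}_A(x_n)\wedge T)-1/k$, and set $h_{n,k}:=f_{n,k}\wedge T\in M$. Then $h:=\sup_{n,k}h_{n,k}$ is a countable supremum bounded by $T$, so the fourth axiom yields $h\in M$. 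By construction $h\leq\varrho^{(M)}_A\wedge T$ pointwise, while $h(x_n)\geq\varrho^{(M)}_A(x_n)\wedge T$ for every $x_n\in X_0$, hence equality holds on $X_0$. To lift equality to all of $X$, note that $\varrho^{(M)}_A$ is a supremum of continuous functions and therefore lower semicontinuous, and so is $\varrho^{(M)}_A\wedge T$; since $h\in M\subset C(X)$, approximating $x\in X$ by a sequence $x_{n_j}\to x$ in $X_0$ gives
\[ \varrho^{(M)}_A(x)\wedge T\leq\liminf_j\bigl(\varrho^{(M)}_A(x_{n_j})\wedge T\bigr)=\liminf_j h(x_{n_j})=h(x), \]
so $h=\varrho^{(M)}_A\wedge T\in M$.

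For the continuity of $\varrho^{(M)}$ I would use that the pseudo-metric $\varrho^{(M)}\wedge T$ has $(\varrho^{(M)}\wedge T)(\cdot,y)=\varrho^{(M)}_{\{y\}}\wedge T\in C(X)$ for every $y\in X$, and the triangle inequality recalled at the start of the section gives
\[ \bigl|(\varrho^{(M)}\wedge T)(x,y)-(\varrho^{(M)}\wedge T)(x',y')\bigr|\leq(\varrho^{(M)}\wedge T)(x,x')+(\varrho^{(M)}\wedge T)(y,y'), \]
whose right-hand side tends to $0$ as $(x',y')\to(x,y)$ in the original topology. Hence $\varrho^{(M)}\wedge T$ is jointly continuous for every $T$. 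If $\varrho^{(M)}(x_0,y_0)<\infty$, then $\varrho^{(M)}$ coincides with $\varrho^{(M)}\wedge T$ in a neighborhood of $(x_0,y_0)$ for $T$ large enough, yielding local continuity; if $\varrho^{(M)}(x_0,y_0)=\infty$, the joint continuity of $\varrho^{(M)}\wedge T$ forces $\varrho^{(M)}(x,y)\geq(\varrho^{(M)}\wedge T)(x,y)\to T$ for every $T$, so $\varrho^{(M)}(x,y)\to\infty$.

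Finally, if $\varrho^{(M)}_A$ is finite everywhere, then $\varrho^{(M)}_A=\sup_{n\in\NN}(\varrho^{(M)}_A\wedge n)$ is a countable, finite-valued supremum of elements of $M$, so $\varrho^{(M)}_A\in M$ by the fourth axiom. I expect the main obstacle to be the reduction of the uncountable supremum defining $\varrho^{(M)}_A\wedge T$ to a countable one lying inside $M$; this is exactly where separability of $X$ and the lower semicontinuity of $\varrho^{(M)}_A$ enter, and all remaining pieces of the lemma are rather quick consequences of this reduction.
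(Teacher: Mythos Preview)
Your argument is correct and gives a genuinely different ordering of the steps compared to the paper. The paper first establishes continuity of $\varrho^{(M)}$ directly: it shows by contradiction that $y\mapsto\varrho^{(M)}(y,x)$ is continuous at $x$ (if not, pick $f_i\in M$ with $f_i(x)=0$, $f_i(y_i)>\varepsilon$, truncate by $2\varepsilon$, and observe that $\sup f_i\in M$ would be discontinuous), and then upgrades to joint continuity via the triangle inequality. Only afterwards does it prove $\varrho^{(M)}_A\wedge T\in M$, by covering $X$ with countably many \emph{intrinsic} balls $B_{1/n}(y_i)$ and approximating $u:=\varrho^{(M)}_A\wedge T$ from below by $u_n:=\sup_i v_i^n$ with $u-3/n\leq u_n\leq u$; this covering step needs the already established continuity of $\varrho^{(M)}$.

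You reverse the logic: you first prove $\varrho^{(M)}_A\wedge T\in M$ using only the a~priori lower semicontinuity of $\varrho^{(M)}_A$ (as a supremum of continuous functions) together with a countable dense set in the \emph{original} topology, and then obtain continuity of $\varrho^{(M)}$ as a free consequence via $\varrho^{(M)}_{\{y\}}\wedge T\in M\subset C(X)$ and the triangle inequality. Your route avoids the separate diagonal-continuity argument and does not need intrinsic balls at all; the paper's route, in exchange, isolates a pleasant self-contained fact (continuity at the diagonal) that explains conceptually why the supremum axiom forces continuity. Both approaches ultimately rely on separability of $X$.
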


\begin{proof}
We begin by proving that for any $x$, the function $y\mapsto \varrho^{(M)}(y,x)$ is continuous at $x$. Assume not, then there is an $\epsilon>0$ and a sequence $y_i \rightarrow x$ with $\varrho^{(M)}(y_i,x) > 2 \epsilon$. Then there are $f_i \in M$ with $f_i(x) = 0$ and $f_i(y_i)>\epsilon$. Since $M$ is compatible with truncations we may assume that $f_i \leq 2 \epsilon$. From this $f := \sup f_i \in M$ follows, and therefore $f(x) = 0$ and $f(y_i)>\epsilon$. This contradicts the continuity of $f$. Therefore $y\mapsto \varrho^{(M)}(y,x)$ is continuous at $x$.

From the inequality $|\varrho^{(M)}(x,y)-\varrho^{(M)}(x',y')| \leq \varrho^{(M)}(x,x')+\varrho^{(M)}(y,y')$ we infer that the $\rho^{(M)}$ is jointly continuous.

 Let $u := \varrho^{(M)}_A$ resp. $u := \varrho^{(M)}_A \wedge T$. For every $n \in \NN$ there is a countable set of points $y_i = y_i^n \in X$, $i \in \NN$, such that $\{ B_{1/n}(y_i) \}$ covers $X$. By the previous lemma, for every $i \in \NN$ there is a function $v_i = v_i^n \in M$ with $v_i (A) = 0$ and $v_i(y_i) \geq u(y_i) - 1/n$. Moreover, we know that $v_i(y) \leq u(y)$ for all $ y \in X$. From the definition of $B_r(x)$ and the remark above we can infer, that $v_i(y) \geq v_i(y_i) - 1/n$ for all $ y \in B_{1/n}(y_i)$ and $u(y) \leq u(y_i) + 1/n$ for all $ y \in B_{1/n}(y_i)$. In this way we get
 \[ u(y) \leq u(y_i) + 1/n \leq v_i(y_i) + 2/n \leq v_i(y) + 3/n \;\mbox{ for all }\; y \in B_{1/n}(y_i). \]
 Therefore $u_n := \sup v_i^n \in M$ and $u - 3/n \leq u_n \leq u$. (W.l.o.g. $u_n \leq T$ if needed.) This gives $u = \sup u_n \in M$.
\end{proof}

\begin{remark}
 We could set $M_0 := \{ f \in \calD_{loc}^* \cap C(X) \mid \mu^{(d)}(f) \leq m \}$ and would get $\varrho_0(x, y) := \sup \{ f(x) - f(y) \mid f \in \calD_{loc}^* \cap C(X), \mu^{(d)}(f) \leq m \}$, but, in general, this $M_0$ does not satisfy the fourth point in the definition, $\sup\{f_i\} \in M_0$, and, in general, $\varrho_0$ is not an intrinsic metric in our sense, as we shall discuss in Section \ref{sec:strongly_local}. Therefore the direct construction of cut-off functions done below seems not possible with this metric. This ``maximal'' intrinsic metric may be useful in some tasks, however, and we know that $\varrho_0 = \sup_\text{$M$ is set of i.m.f.} \varrho^{(M)}$.
\end{remark}

\section{The strongly local case}\label{sec:strongly_local}

In the previous sections we have introduced a concept of intrinsic metric for general Dirichlet forms. Of course, intrinsic metrics have been well studied in the case of strongly local forms. In this section we discuss the crucial difference between the local and the non-local case: For local Dirichlet forms (under an additional continuity asumption) there is a maximal intrinsic metric. For non-local Dirichlet forms there is in general not a maximal intrinsic metric.

\medskip

\begin{lemma}
 Let $\calE$ be local and let $\varrho_1$ and $\varrho_2$ be two intrinsic metrics. Then $\varrho_1 \vee \varrho_2$ is an intrinsic metric.
\end{lemma}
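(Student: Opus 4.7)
My plan is to verify the definition of intrinsic metric for $\varrho := \varrho_1 \vee \varrho_2$ directly, exploiting that locality of $\calE$ makes $\mu^{(b)} \equiv 0$ (so $\calD_{loc}^* = \calD_{loc}$) and that the truncation property of $\mu^{(c)}$ behaves well under finite maxima and minima. The candidate reference measures will be $m_b := 0$ and $m_c := m_c^1 \vee m_c^2$, the lattice supremum of the two Radon measures associated with the $\varrho_i$; since each $m_c^i \leq m$, so is $m_c$. That $\varrho$ is a continuous pseudo-metric is immediate from the triangle inequalities and the joint continuity of the $\varrho_i$ (the latter being a consequence of $|\varrho_i(x,y)-\varrho_i(x',y')|\leq \varrho_i(x,x')+\varrho_i(y,y')$ combined with the continuity of each $\varrho_{i,\{y\}}\wedge T$), and in particular $\varrho_A \wedge T$ is continuous for every $A\subset X$ and $T>0$.

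The key observation is the pointwise identity
\[ h_{y,T}(x) := \bigl(\varrho_1(x,y)\wedge T\bigr) \vee \bigl(\varrho_2(x,y)\wedge T\bigr) = \varrho(x,y)\wedge T. \]
By the truncation property of $\mu^{(c)}$ one gets $\mu^{(c)}(h_{y,T})\leq m_c$, while $h_{y,T}\in \calD_{loc}\cap C(X)$ (using stability of $\calD_{loc}$ under $\vee$). For arbitrary $A\subset X$, separability of $X$ provides a countable set $\{y_n\}\subset A$ dense in $A$, and joint continuity of $\varrho$ gives $\varrho_A(x)\wedge T = \inf_n h_{y_n,T}(x)$. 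The finite minima $u_n := \min_{i\leq n} h_{y_i,T}$ therefore lie in $\calD_{loc}\cap C(X)$ with $0\leq u_n\leq T$ and $\mu^{(c)}(u_n)\leq m_c$ (iterated truncation for $\wedge$), and decrease pointwise to $u := \varrho_A\wedge T$.

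The last step, and the main obstacle, is to transfer these bounds from the $u_n$ to their countable infimum $u$. My plan is to adapt the cutoff-and-weak-compactness scheme from the proof of Theorem \ref{thm:cool}. For an open relatively compact $G\subset X$ and a cutoff $\phi\in \calD\cap C_c(X)$ with $\phi\equiv 1$ on $G$ and $0\leq\phi\leq 1$, after truncating local extensions of the $u_n$ to $[0,T]$ the algebraic structure of $\calD\cap L^\infty$ places $\phi u_n$ in $\calD\cap L^\infty$. A Leibniz-type computation together with $\mu^{(c)}(u_n)\leq m_c$ and $\|u_n\|_\infty\leq T$ produces a uniform $\calE_1$-bound on $\phi u_n$; a weakly $\calE_1$-convergent subsequence then has limit $\phi u$ (identified in $L^2$ by dominated convergence), showing $\phi u\in \calD$ and hence $u\in \calD_{loc} = \calD_{loc}^*$. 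For the measure estimate, testing against $f\in C_c(G)$, $f\geq 0$, I will combine the Cauchy--Schwarz type inequality $0\leq \int f\,\rmd\mu^{(c)}(\phi u_n-\phi u)$ with the vague continuity from Proposition \ref{prop:continuity} (applicable since the $\phi u_n$ are $L^\infty$-bounded and $\calE_1$-weakly convergent to $\phi u$) to deduce
\[ \int f\,\rmd\mu^{(c)}(u) \leq \liminf_n \int f\,\rmd\mu^{(c)}(u_n) \leq \int f\,\rmd m_c. \]
Letting $f$ and $G$ vary yields $\mu^{(c)}(u)\leq m_c$, completing the verification.
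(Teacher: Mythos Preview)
Your proof is correct. The paper's own proof is a single line (``This is a direct consequence of the truncation property of $\mu^{(c)}$''), so the comparison is really one of explicitness rather than strategy: both arguments rest on the truncation identity for $\mu^{(c)}$, but you have unpacked what the paper leaves to the reader.

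The genuine work you do---and which the one-liner hides---is the passage from singleton distances to $\varrho_A\wedge T$ for arbitrary $A$. The truncation property immediately handles $h_{y,T}=(\varrho_{1,\{y\}}\wedge T)\vee(\varrho_{2,\{y\}}\wedge T)$ and finite minima thereof, but $(\varrho_1\vee\varrho_2)_A$ is \emph{not} simply $(\varrho_{1,A})\vee(\varrho_{2,A})$, so one really does need the countable-infimum step. Your cutoff--weak-compactness argument with Proposition~\ref{prop:continuity} is exactly the mechanism used in the proofs of Theorem~\ref{thm:cool} and Theorem~\ref{maximal-intrinsic-metric}; in effect you have specialised that machinery to the present situation rather than invoked it. This is entirely legitimate, and arguably more self-contained than the paper's pointer. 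If you wanted to shorten your write-up while staying rigorous, you could replace the last two paragraphs by the observation that $\{u\in\calD_{loc}\cap C(X):\mu^{(c)}(u)\le m_c\}$ is closed under pointwise limits of uniformly bounded monotone sequences---this is precisely what the proof of Theorem~\ref{maximal-intrinsic-metric} establishes, and the paper presumably expects the reader to recognise that the same argument applies here.
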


\begin{proof}
 This is a direct consequence of the truncation property of $\mu^{(c)}$.
\end{proof}

The following example shows, that this in general is not true for non-local Dirichlet forms.
\begin{example}
 Let $X = \{1, 2, 3\}$, $m$ be the counting measure on $X$ and $\calE(u, v) = 2(u(1) - u(2))^2 + 2(u(2) - u(3))^2$.
 \[ \varrho_1(x, y) := \begin{cases}
            1 & \text{if}\ x \neq y \text{ and } \{x, y\} \supset \{3\} \\
            0 & \text{otherwise}
           \end{cases} \]
 \[ \varrho_2(x, y) := \begin{cases}
            1 & \text{if}\ x \neq y \text{ and } \{x, y\} \supset \{1\} \\
            0 & \text{otherwise}
           \end{cases} \]
 $\varrho_1$ and $\varrho_2$ are both intrinsic metrics. However, one can see directly that $\varrho_1 \vee \varrho_2$ is not an intrinsic metric. Thus, there cannot be any intrinsic metric bigger than both $\varrho_1$ and $\varrho_2$ as otherwise $\varrho_1\vee \varrho_2$ had to be an intrinsic metric by
 Corollary \ref{cor:IM_order}.
\end{example}

On the other hand, in the strongly local case there is a maximal intrinsic metric (whenever a continuity condition is satisfied).

\begin{theorem}\label{maximal-intrinsic-metric}
 Let $\calE$ be local and assume that $\varrho(x, y) := \sup \{ u(x) - u(y) : u \in \calD_{loc} \cap C(X), \mu^{(c)}(u) \leq m \}$ is continuous. Then $\varrho$ is an intrinsic metric and any other intrinsic metric is (pointwise) smaller or equal to $\varrho$
\end{theorem}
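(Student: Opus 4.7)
The plan is to realize $\varrho$ as $\varrho^{(M)}$ for a suitable set $M$ of intrinsic-metric functions in the sense of Section \ref{Intrinsic-metric-functions} and then invoke Theorem \ref{thm:3.9} to conclude that $\varrho$ is an intrinsic metric; maximality will then be essentially a tautology. First I would check that $\varrho$ is a pseudo-metric: symmetry follows because the admissible class is closed under $u\mapsto -u$ (the strongly local energy measure is invariant under sign change), while the triangle inequality is immediate from writing $u(x)-u(y)=(u(x)-u(z))+(u(z)-u(y))$ for any admissible $u$ and taking suprema.

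I would then set $M:=\{u\in\calD_{loc}\cap C(X):\mu^{(c)}(u)\le m\}$, with $m_b:=0$ and $m_c:=m$; here I use that $\calE$ strongly local implies $J\equiv 0$ and hence $\calD_{loc}^*=\calD_{loc}$. The axioms $0\in M$, $f+c,-f\in M$ and the measure bound are obvious, and closure under \emph{finite} suprema follows from the truncation property of $\mu^{(c)}$ together with the lattice stability of $\calD_{loc}$. The delicate step, and the main obstacle, is closure under \emph{countable} suprema of finite value. Given $f_n\in M$ with $f:=\sup_n f_n$ finite, each $f_n$ is admissible in the sup defining $\varrho$, so $|f_n(x)-f_n(y)|\le\varrho(x,y)$, which passes to the limit; the assumed continuity of $\varrho$ then yields continuity of $f$. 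To see that $f\in\calD_{loc}$ with $\mu^{(c)}(f)\le m$, set $g_n:=f_1\vee\cdots\vee f_n\in M$, so $g_n\uparrow f$. Fix a relatively compact open $G$ and choose a cut-off $\phi\in C_c(X)\cap\calD$ with $\phi\equiv 1$ on $G$. Since $f$, hence each $g_n$, is uniformly bounded on the compact set $\supp\phi$, one can pick bounded representatives $w_n\in\calD\cap L^\infty$ of $g_n$ on $\supp\phi$. The Leibniz rule combined with the uniform bound $\mu^{(c)}(w_n)|_{\supp\phi}=\mu^{(c)}(g_n)|_{\supp\phi}\le m$ then gives $\calE(w_n\phi)\le C$ uniformly in $n$. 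Since $w_n\phi\to f\phi$ in $L^2$, a subsequence converges $\calE_1$-weakly to $f\phi\in\calD$, which exhibits $f$ on $G$ as the restriction of an element of $\calD$ and places $f$ in $\calD_{loc}$. The bound $\mu^{(c)}(f)\le m$ on $G$ follows by the lower-semicontinuity argument used in the proof of Theorem \ref{thm:cool}: for $h\in C_c(G)\cap\calD$, $h\ge 0$,
\[ \int h\,\rmd\mu^{(c)}(f)\le\liminf_n\int h\,\rmd\mu^{(c)}(w_n\phi)\le\int h\,\rmd m, \]
where the first inequality combines the positivity $0\le\mu^{(c)}(w_n\phi-f\phi)$ with Proposition \ref{prop:continuity} applied to the weak convergence $w_n\phi\to f\phi$.

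Once $M$ is known to be a set of intrinsic-metric functions, Theorem \ref{thm:3.9} produces an intrinsic metric $\varrho^{(M)}$, and the identity $\varrho^{(M)}=\varrho$ is immediate: any admissible $u$ in the definition of $\varrho$ yields $u-u(y)\in M$ with the same difference $u(x)-u(y)$, and conversely any $f\in M$ with $f(y)=0$ is admissible for $\varrho$. This shows that $\varrho$ itself is an intrinsic metric. For maximality, let $\varrho'$ be any intrinsic metric and fix $y\in X$, $T>0$; by the axioms of intrinsic metrics applied to $A=\{y\}$, the function $u:=\varrho'(\cdot,y)\wedge T$ lies in $\calD_{loc}\cap C(X)$ with $\mu^{(c)}(u)\le m$, hence is admissible in the sup defining $\varrho$, so $\varrho(x,y)\ge u(x)-u(y)=\varrho'(x,y)\wedge T$; letting $T\to\infty$ yields $\varrho\ge\varrho'$.
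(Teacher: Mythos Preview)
Your proposal is correct and follows essentially the same route as the paper: take $M=\{u\in\calD_{loc}\cap C(X):\mu^{(c)}(u)\le m\}$, verify it is a set of intrinsic-metric functions (with the countable-sup axiom established by the cut-off/weak-compactness/lower-semicontinuity argument of Theorem~\ref{thm:cool}, simplified by $J\equiv 0$), and then invoke Theorem~\ref{thm:3.9}. The paper's proof is terser, simply pointing to Theorem~\ref{thm:cool} for the key step and noting that maximality follows by plugging $\varrho'(\cdot,y)$ into the supremum; you spell out the details, and your maximality argument via $\varrho'(\cdot,y)\wedge T$ is the same idea made explicit.
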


\begin{proof}
 We first show that $\varrho$ is an intrinsic metric. Under the (somewhat stronger) assumption that $\varrho$ generates the original topology this is proven in \cite{Sturm-94b}, see the appendix of \cite{BoutetdeMonvelLS-08} for a somewhat alternative approach as well. An explicit proof, which is based on \cite{Sturm-94b} and assumes only continuity of $\varrho$,  can be found in \cite{Wingert-07}. For the convenience of the reader and as we have already gathered the necessary ingredients we include a sketch here.

 Choose $M := \{ u \in \calD_{loc} \cap C(X) \mid \mu^{(c)}(u) \leq m \}$. We prove for $f_i \in M$, $i \in \NN$ with $f_i \leq g$ for some function $g < \infty$ that we have $f := \sup f_i \in M$. First observe that $f$ is continuous, because $\varrho$ is, und thus $f \in L^\infty_{loc}$. By Lemma \ref{lem:3.5} we can assume $0 \leq f \leq M$. Define $u_n := \sup_{1 \leq i \leq n} f_i$. Then $f = u := \sup u_n$, $u_n$ is increasing and $\mu^{(c)}(u_n) \leq m$.

 Following the proof of Theorem \ref{thm:cool} we can now show that $u \in \calD_{loc}$ and
 \[ \mu^{(c)}(u) \leq m. \]
 The proof in fact simplifies because $J = 0$. We can then use the second part of Theorem \ref{thm:3.9} to conclude that $\varrho$ is an intrinsic metric.

 \smallskip

 The statement about maximality is clear (as for any intrinsic metric $\varrho'$ and each $x\in X$, the function $u(y) := \varrho' (x,y)$ can be plugged into the definition of $\varrho$).
\end{proof}

\section{Absolutely continuous jump measure}\label{Absolutely}

Here we take a look at a special situation that the Dirichlet form is purely non-local with a density. In this case, we can give a simple sufficient condition for a pseudo-metric to be intrinsic.

\medskip

We consider the following situation (S) described next:
\begin{itemize}
\item[(S)]
The function $j : X \times X \rightarrow [0,\infty]$ is measurable, symmetric (i.e., $j(x,y)=j(y,x)$ for all $x, y \in X$) and finite outside the diagonal,
 and the set $\tilde{\calD}$ of $u \in C_c(X)$ with
 \[ \int_{X \times X - d} (u(x) - u(y))^2 j(x, y) m(\rmd x) m(\rmd y) <\infty \]
 is dense in $C_c(X)$ with respect to the supremum norm.
\end{itemize}

 By standard Fatou-type arguments, the form
 \[ \calE(u) := \int_{X \times X - d} (u(x) - u(y))^2 j(x, y) \: m(\rmd x) m(\rmd y) \]
 defined for $u \in \tilde{\calD}$ is closable in $L^2(X,m)$. Let $\calD$ be the closure of $\tilde{\calD}$ with respect to $\calE_1$. Then, $(\calE, \calD)$ is a regular Dirichlet form on $L^2(X,m)$.

\begin{example}\label{ex:disclapl2}
The discrete Laplacian on $\ZZ^d$, discussed already in Example \ref{ex:disclapl}, falls into the situation (S) considered in this section. In that case, $j(x,y)=1$ if $|x-y|=1$ and $j(x,y)=0$ otherwise. In Subsection \ref{sec:graph} we will extend this example to general graphs.
\end{example}

\begin{example}\label{ex:fraclapl}
For $0<s<1$ we consider $\calE(u)=\|(-\Delta)^{s/2}u\|^2_{L^2(\RR^d)}$ for $u\in W^{s,2}(\RR^d)$, where $(-\Delta)^s$ is defined via the Fourier transform. By Plancharel's theorem we find that
\[ \calE(u) = a_{s,d} \iint_{\RR^d\times\RR^d} \frac{(u(x)-u(y))^2}{|x-y|^{d+2s}} \rmd x\rmd y \]
with $a_{s,d}^{-1} = 4 \int_{\RR^d} |z|^{-d-2s} \sin^2(z_d/2) \rmd z$. (The precise value of this constant is not important for us.) Hence we are again in situation (S) with $j(x,y) = a_{s,d} |x-y|^{-d-2s}$.
\end{example}

\smallskip

The next result is the converse to Lemma \ref{lem:4.8}.

\begin{theorem}\label{thm:converse}
 Assume the situation (S). Let $\varrho$ be a continuous pseudo-metric on $X$ such that
 \[ \int_{X - \{x\}} \varrho(x, y)^2 j(x, y) \: m(\rmd y) \leq 1 \]
 for almost all $x \in X$. Then $\varrho$ is an intrinsic metric.
\end{theorem}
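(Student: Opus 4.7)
In situation (S) the form is purely non-local, so $\mu^{(c)}\equiv 0$ and $k\equiv 0$, and I will show $\varrho$ is intrinsic with the choices $m_c=0$, $m_b=m$. Fix $A\subset X$ and $T>0$ and set $u:=\varrho_A\wedge T$. Continuity of $u$ is inherited from continuity of $\varrho$ via the Lipschitz bound $|u(x)-u(y)|\leq\varrho(x,y)$, and this same bound together with Fubini and the hypothesis of the theorem immediately yields
\[
\int_K\rmd\mu^{(b)}(u) \;\leq\; \int_K m(\rmd x)\int_{X-\{x\}}\varrho(x,y)^2 j(x,y)\, m(\rmd y)\;\leq\; m(K)
\]
for every compact $K$; hence $\mu^{(b)}(u)\leq m$ provided $u\in\calD_{loc}^*$. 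The essential task is therefore to prove $u\in\calD_{loc}$.

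\textbf{Approximation by core elements.} Let $G\subset X$ be relatively compact and open. By regularity pick $\phi\in C_c(X)\cap\calD$ with $\phi\equiv 1$ on $\bar G$ and $0\leq\phi\leq 1$, and set $v:=\phi u\in C_c(X)$. Since $v=u$ on $G$, it suffices to show $v\in\calD$. My plan is to produce a sequence $v_n\in\tilde\calD$ with $v_n\to v$ in $L^2$ and $\sup_n\calE(v_n)<\infty$. Since $\varrho_A=\varrho_{\bar A}$ by continuity of $\varrho$ and $\bar A$ is separable (the case $A=\emptyset$ reduces to $v=T\phi\in\calD$), fix a countable dense subset $\{y_i\}\subset\bar A$ and put
\[
u_n := \min_{1\leq i\leq n}\bigl(\varrho(\cdot,y_i)\wedge T\bigr),\qquad v_n := \phi\, u_n.
\]
Each $u_n$ is continuous, bounded by $T$, and satisfies $|u_n(x)-u_n(y)|\leq\varrho(x,y)$ (as a finite minimum of $1$-Lipschitz functions with respect to $\varrho$), and $u_n\downarrow u$ pointwise.

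\textbf{Closing the argument.} Expanding $v_n(x)-v_n(y)=\phi(x)\bigl(u_n(x)-u_n(y)\bigr)+u_n(y)\bigl(\phi(x)-\phi(y)\bigr)$ and applying $(a+b)^2\leq 2a^2+2b^2$, the Lipschitz bound on $u_n$, the standing hypothesis, and $\calE(\phi)<\infty$ yield
\[
\calE(v_n)\leq 2\int_{\supp\phi}\phi(x)^2\, m(\rmd x) + 2T^2\calE(\phi) \leq 2m(\supp\phi) + 2T^2\calE(\phi),
\]
uniformly in $n$; since $v_n\in C_c(X)$, this places $v_n$ in $\tilde\calD\subset\calD$. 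Dominated convergence ($|v_n|\leq T\phi\in L^2$) gives $v_n\to v$ in $L^2$, and $\calE_1$-boundedness produces a weakly $\calE_1$-convergent subsequence whose limit must coincide with the $L^2$-limit $v$, so $v\in\calD$. The main obstacle is exactly this upgrade from ``$v$ continuous, compactly supported, finite energy'' to ``$v\in\calD$''; the energy bound alone is almost tautological given the hypothesis of the theorem, but one must actually exhibit an approximating sequence inside the core. Writing $\varrho_A\wedge T$ as a countable monotone infimum of translated truncated pseudo-distances $\varrho(\cdot,y_i)\wedge T$ makes this possible, because the finite minima (multiplied by the cutoff $\phi$) are honest elements of $\tilde\calD$ by the hypothesis.
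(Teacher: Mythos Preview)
Your argument is correct, but it works harder than necessary, and the ``main obstacle'' you identify is not actually an obstacle at all. In situation (S) the core $\tilde\calD$ is \emph{by definition} the set of all $u\in C_c(X)$ with $\calE(u)<\infty$; there is no further upgrade required from ``continuous, compactly supported, finite energy'' to membership in $\calD$. In particular, your own function $v=\phi u$ already lies in $\tilde\calD$: the very computation you perform for $v_n$ works verbatim with $u$ in place of $u_n$, since $u=\varrho_A\wedge T$ satisfies the same Lipschitz bound $|u(x)-u(y)|\le\varrho(x,y)$ and the same pointwise bound $0\le u\le T$. Thus the whole approximation by finite minima over a countable dense subset of $\bar A$ and the subsequent weak-$\calE_1$-compactness step can be deleted.

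The paper's proof exploits exactly this: it simply records that $\varrho_A\wedge T\wedge\varphi\in\tilde\calD\subset\calD$ for any $\varphi\in\calD\cap C_c(X)$ (choosing $\varphi\ge T$ on $G$ to make it agree with $\varrho_A\wedge T$ there), so the $\calD_{loc}$ step is a one-liner. The only genuine difference is cosmetic: the paper truncates with a minimum $u\wedge\varphi$, while you multiply by a cutoff $\phi u$ and therefore need the Leibniz-type splitting. Both land directly in $\tilde\calD$; neither requires an approximation.
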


\begin{proof}
 By
 \[ | \varrho_A(x) \wedge T - \varrho_A(y) \wedge T | \leq \varrho(x, y), \]
 we obtain
 \[ \sint{E \times X - d} (\varrho_A \wedge T (x) - \varrho_A \wedge T (y))^2 j(x, y) \: \rmd x \rmd y \leq \sint{E \times X - d} \varrho(x, y)^2 j(x, y) \: \rmd x \rmd y \leq \int_E \: \rmd x \]
 and hence $\rmd \mu^{(d)}(\varrho_A\wedge T) \leq \rmd x$.
Thus, it remains to show that $\varrho_A \wedge T \in \calD_{loc}$. For this we note that $\varrho_A \wedge T \wedge \varphi \in \widetilde{D} \subset \calD$ for any $\varphi \in \calD \cap C_c(X)$. This gives the desired statement.
\end{proof}

\section{Dirichlet forms with finite jump size}\label{Finite}

In this section we introduce the jump size of a Dirichlet form with respect to a given intrinsic metric. As can be expected, there is no big difference between Dirichlet forms with finite jump size and local Dirichlet forms. Dirichlet forms with infinite jump size, however, will be much more difficult to handle (see the later sections).\medskip

\begin{definition}
 Let $\varrho$ be an intrinsic metric. We define the \emph{jump size} of $\calE$ with respect to $\varrho$ by
 \[ \inf \{ t \geq 0 \mid J(\{ (x, y) \in X \times X - d \mid \varrho(x, y) > t\}) = 0\}. \]
\end{definition}

To illustrate this notion, let us consider the following

\begin{example}
For the discrete Laplacian on $\ZZ^d$ from Examples \ref{ex:disclapl} and \ref{ex:disclapl2} we know that
$j(x,y)=1$ if $|x-y|=1$ and $j(x,y)=0$ otherwise, and we can take $\varrho(x,y)=(1/\sqrt{2d})|x-y|$. Therefore, the jump size with respect to this intrinsic metric is $s=1/\sqrt{2d}$.
\end{example}

\begin{example}
Assume that $\rho$ is a translation-invariant (i.e., $\rho(x,y)=\sigma(x-y)$) intrinsic metric for the fractional Laplacian $(-\Delta)^s$, $0<s<1$, from Example \ref{ex:fraclapl}. If $\sigma$ is unbounded, then the jump size is infinite. (Indeed, if $\sigma$ is unbounded, then $\int_{\sigma(z)>t} |z|^{-d-2s} \rmd z>0$ for any $t>0$, and hence $\iint_{\sigma(x-y)>t} |x-y|^{-d-2s} \rmd x\rmd y = \infty$ for any $t>0$.) We refer to Subsection \ref{sec:fraclapl} for a discussion of intrinsic metrics for $(-\Delta)^s$.
\end{example}

\begin{remark}
 Note that the jump size is $0$ if $J \equiv 0$, i.e., if $\calE$ is local. The converse holds as well provided $\varrho$ separates points.
\end{remark}

For later use we note the following statement. Recall that the cut-off function $\eta_{E,a}$ was introduced in Definition \ref{cutoff}.

\begin{proposition}\label{hilfsaussage}
 Let $s$ be the jump size of $\calE$, let $E\subset X$ and $a>0$ be given and set $\eta:=\eta_{E,a}$.
 Then $a^2 \mu^{(d)}(\eta) \leq \One_{A_{s+a}(E)} m$.
\end{proposition}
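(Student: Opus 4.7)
The plan is to leverage Proposition \ref{cutoffprop}, which already gives the estimate $\mu^{(d)}(\eta) \leq (1/a^2)\, m$; the task is then to upgrade this by showing that the measure $\mu^{(d)}(\eta)$ is actually supported in $A_{s+a}(E)$. Since $A_{s+a}(E)^c = \{\varrho_E > s+a\} \cup \{\varrho_{E^c} > s+a\}$, and both sets are open by continuity of $\varrho_E$ and $\varrho_{E^c}$ (cf.\ Proposition \ref{distprop}), it suffices to show that both $\mu^{(c)}(\eta)$ and $\mu^{(b)}(\eta)$ vanish on these two open sets.

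For the strongly local part $\mu^{(c)}(\eta)$, I would invoke strong locality: on the open set $\{\varrho_E > s+a\} \subset \{\varrho_E > a\}$ the function $\eta$ is identically $0$, so $\mu^{(c)}(\eta)$ vanishes there. On the open set $\{\varrho_{E^c} > s+a\}$, I note that any point $x$ in this set cannot lie in $E^c$ (otherwise $\varrho_{E^c}(x) \leq \varrho(x,x) = 0$), hence $x \in E$, hence $\varrho_E(x) = 0$, hence $\eta(x) = 1$; so $\eta$ is constant equal to $1$ on $\{\varrho_{E^c} > s+a\}$ and again $\mu^{(c)}(\eta)$ vanishes there by strong locality.

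For the jump part $\mu^{(b)}(\eta)$, I would use the explicit integral representation and show that $(\eta(x)-\eta(y))^2 = 0$ for $J$-a.e.\ $(x,y)$ with $x$ in either of the two open sets. Fix for example $x$ with $\varrho_E(x) > s+a$. For $J$-a.e.\ such $(x,y)$ we have $\varrho(x,y) \leq s$ by the very definition of the jump size. Combined with the triangle-like estimate $\varrho_E(y) \geq \varrho_E(x) - \varrho(x,y)$, this yields $\varrho_E(y) > a$, so $\eta(y) = 0 = \eta(x)$. The case $\varrho_{E^c}(x) > s+a$ is symmetric: a neighbor $y$ with $\varrho(x,y) \leq s$ satisfies $\varrho_{E^c}(y) > a > 0$, which by the argument above forces $y \in E$ and thus $\eta(y) = 1 = \eta(x)$.

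Combining these two vanishing statements with the bound from Proposition \ref{cutoffprop} gives $a^2 \mu^{(d)}(\eta) \leq \One_{A_{s+a}(E)}\, m$, as claimed. There is no real obstacle here: the gap ``$s+a$ versus $a$'' is exactly what the triangle inequality needs to propagate the constancy of $\eta$ from $x$ to its $J$-neighbors, and the continuity of the distance functions ensures that strong locality can be applied on honest open sets.
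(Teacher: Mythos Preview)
Your proof is correct and follows essentially the same idea as the paper's: both arguments show that $\mu^{(d)}(\eta)$ is supported in $A_{s+a}(E)$ and bounded by $a^{-2}m$ there. The paper does this in a single direct computation against a test function $\varphi\in C_c(X)$, restricting the jump integral to $A_{a+s}(E)\times X$ and the strongly local integral to $A_a(E)$ (without spelling out why), then bounding the integrand by $(\varrho_E(x)-\varrho_E(y))^2$ and invoking $\mu^{(d)}(\varrho_E)\le m$. You instead decouple the two steps: the global bound is outsourced to Proposition~\ref{cutoffprop}, and the support restriction is justified carefully by treating the two open pieces of $A_{s+a}(E)^c$ separately via strong locality (for $\mu^{(c)}$) and the jump-size constraint $\varrho(x,y)\le s$ $J$-a.e.\ (for $\mu^{(b)}$). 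Your organization is a bit more modular and makes explicit precisely the step the paper glosses over; the paper's version is shorter because it folds everything into one chain of inequalities. One minor remark: your appeal to Proposition~\ref{distprop} for continuity of $\varrho_E$ carries a finiteness hypothesis that need not hold, but the openness of $\{\varrho_E>s+a\}$ follows already from the continuity of $\varrho_E\wedge T$ guaranteed by the definition of an intrinsic metric.
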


\begin{proof}
 Let $\varphi \in C_c(X)$ and $\varphi \geq 0$. Note that $a \eta = (a - \rho_E)^+$. Hence,
 a direct calculation gives
 \begin{align*}
 a^2 \int_X \varphi \: \rmd \mu^{(d)}(\eta) & = a^2 \sint{X \times X - d} \varphi(x) (\eta(x) - \eta(y))^2 \: J(\rmd x, \rmd y) + a^2 \int_X \varphi \: \rmd \mu^{(c)}(\eta)\\
  &= \sint{A_{a+s}(E) \times X - d} \varphi(x) ((a - \varrho_E(x))^+ - (a - \varrho_E(y))^+)^2 \: J(\rmd x, \rmd y) \\
  & \quad + \sint{A_a(E)} \varphi \: \rmd \mu^{(c)}(\varrho_E) \\
  &\leq \sint{A_{a+s}(E) \times X - d} \varphi(x) (\varrho_E(x) - \varrho_E(y))^2 \: J(\rmd x, \rmd y) + \sint{A_{a+s}(E)} \varphi \: \rmd \mu^{(c)}(\varrho_E) \\
  &= \int_{A_{a+s} (E)} \varphi \rmd \mu^{(d)}(\varrho_E)\\
  &\leq \sint{A_{a+s}(E)} \varphi \: \rmd m.
 \end{align*}
 In the last step, we used that $\varrho$ is an intrinsic metric.
\end{proof}

\section{Measure perturbations and generalized eigenfunctions}\label{Measure}

We will be concerned with perturbations of Dirichlet forms by measures. Let
\[ \calM_0 := \{\nu : \calB \rightarrow [0, \infty] \mid \nu \text{ $\sigma$-additiv, } \nu \ll \capp \} \]
be the set of positiv measures which charge no set of capacity zero. For measures $\nu \in \calM_0$ we define
\[ \calD(\nu) := \{u \in \calD \mid \tilde{u} \in L^2(X, \nu)\} \]
and
\[ \nu(u, v) := \int_X \tilde{u} \tilde{v} \: \rmd \nu. \]
We emphasize that we shall use the notation $\nu(u)=\nu(u,u)$ in accordance with our convention about quadratic forms.

This makes sense as $\tilde{u}$ is defined quasi-everywhere. Let
\[ \calM_1 := \{\nu \in \calM_0 \mid \exists q < 1, C_q \geq 0 : \nu(u) \leq q \calE(u) + C_q \| u \|^2 \;\forall u \in \calD \}. \]
Then $\calE + \nu^+ - \nu^-$ is a closed form for $\nu^+ \in \calM_0$ and $\nu^- \in \calM_1$ and there is an associated selfadjoint operator $H$; see, e.g., \cite[Theorem VIII.16]{ReedS-72}.

Starting from this section we shall take a look at the perturbed form
\[ h := \calE + \nu := \calE + \nu^+ - \nu^-\]
with $\nu^+ \in \calM_0$ and $\nu^- \in \calM_1$.
The form domain of the perturbed form will be denoted by $\calD (h) = \calD \cap \calD (\nu_+)$. This space gives rise to a local space $\calD_{loc}^\ast (h)$ in the way discussed above.

Recall that in Subsection \ref{sec:locformdom} we have given sense to $\calE(u,\phi)$ for $u\in \calD_{loc}^*$ and $\varphi\in \calD$ with compact support. In a similar way, the expression $h(u, \varphi)$ is meaningful for $u\in \calD_{loc}^* (h)$ and $\varphi\in \calD (h)$ with compact support.

\begin{definition}
 A function $u \in \calD_{loc}^* (h) \setminus \{ 0 \}$ is called a \emph{generalized eigenfunction} corresponding to the \emph{generalized eigenvalue} $\lambda \in \RR$ if $h(u, \varphi) = \lambda(u, \varphi)$ for all $\varphi \in \calD (h)$ with compact support.
\end{definition}

\section{The ground state representation}\label{Ground}

This section deals with the ground state representation. This is an old topic, going back (at least) to Jacobi, and we refer to the papers cited in this paragraph for historical remarks. Recently, this representation has been investigated for strongly local Dirichlet forms in \cite{LenzSV}. In a purely non-local situation very similar to the one considered in this paper (and, in fact, even allowing for a non-linearity) it has been derived in \cite{FS}, extending previous special cases in \cite{FLS,FSW}. Here, we generalize this formula to our context with both a local and a non-local part and we provide a simple proof along the lines of \cite{LenzSV}. In fact, we give two version of the result with slightly different assumptions. One version can be thought of to work for the infimum of the spectrum and the other version works for any point in the spectrum.

\bigskip

The next theorem gives an effective bound on the infimum of the spectrum by representing the form. It requires that the generalized eigenfunction has a fixed sign.

\begin{theorem}
 Let $h = \calE + \nu$ with $\nu^+ \in \calM_0$, $\nu^- \in \calM_1$ and $\calE$ a regular Dirchlet form. Let $u$ be a generalized eigenfunction to the eigenvalue $\lambda$ with $u \neq 0$ q.e.\ and $u^{-1} \in \calD_{loc}^*$. Then the formula
 \[ h(\phi, \psi) - \lambda (\phi,\psi) = \sint{X \times X} u(x) u(y) \: \rmd \Gamma(\phi u^{-1}, \psi u^{-1}) \]
 holds true for all $\phi, \psi \in \calD (h) $ with $\phi u^{-1}, \psi u^{-1} \in \calD_{loc}^* (h)$ and $\phi \psi u^{-1} \in \calD_c (h) $. If $u^{-1} \in \calD_{loc}^* (h) \cap L_{loc}^\infty$ the formula holds true for all $\phi, \psi \in \calD (h) \cap L_c^\infty$.
\end{theorem}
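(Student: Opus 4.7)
The plan is to reduce the statement to a pointwise algebraic identity that expresses $\calE(\phi,\psi)$ in terms of $\int u(x)u(y)\,\rmd\Gamma(\phi u^{-1},\psi u^{-1})$ modulo a correction term of the form $\calE(u,\phi\psi u^{-1})$, and then to absorb the correction by testing the eigenfunction equation against $\phi\psi u^{-1}$. This follows the scheme of \cite{LenzSV} in the strongly local case and of \cite{FS} in the purely non-local case; the novelty here is only that both parts of the energy measure occur simultaneously.

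Writing $f:=\phi u^{-1}$ and $g:=\psi u^{-1}$, a direct four-term expansion verifies the pointwise identity
\[ (\phi(x)-\phi(y))(\psi(x)-\psi(y)) = u(x)u(y)(f(x)-f(y))(g(x)-g(y)) + (u(x)-u(y))\bigl((\phi\psi u^{-1})(x)-(\phi\psi u^{-1})(y)\bigr). \]
Integrating against $J(\rmd x,\rmd y)$ handles the non-local contribution. For the strongly local part, two applications of the Leibniz rule of Theorem \ref{Leibniz}, once to $\phi=f\cdot u$, $\psi=g\cdot u$ and once to $\phi\psi u^{-1}=fg\cdot u$, combine algebraically to give $\rmd\mu^{(c)}(\phi,\psi) = u^2\,\rmd\mu^{(c)}(f,g) + \rmd\mu^{(c)}(u,\phi\psi u^{-1})$. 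Summing the jump and strongly local pieces via the definition of $\Gamma$ and incorporating the killing measure via $u\cdot(\phi\psi u^{-1}) = \phi\psi$ q.e.\ yields the master identity
\[ \calE(\phi,\psi) = \sint{X\times X} u(x)u(y)\,\rmd\Gamma(\phi u^{-1},\psi u^{-1}) + \calE(u,\phi\psi u^{-1}). \]

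Since $\phi\psi u^{-1}\in\calD_c(h)$ is an admissible test function, the eigenfunction equation together with $\nu(u,\phi\psi u^{-1}) = \int\phi\psi\,\rmd\nu = \nu(\phi,\psi)$ and $(u,\phi\psi u^{-1}) = (\phi,\psi)$ gives $\calE(u,\phi\psi u^{-1}) = \lambda\,(\phi,\psi) - \nu(\phi,\psi)$. Substituting this into the master identity delivers $h(\phi,\psi)-\lambda\,(\phi,\psi) = \int u(x)u(y)\,\rmd\Gamma(\phi u^{-1},\psi u^{-1})$, which is the claim.

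For the second version of the statement, the remaining task is to check the membership assumptions of the first version from $u^{-1}\in\calD_{loc}^*(h)\cap L_{loc}^\infty$ and $\phi,\psi\in\calD(h)\cap L_c^\infty$: Proposition \ref{thm:products}(a) places $\phi u^{-1}$ and $\psi u^{-1}$ in $\calD_{loc}^*\cap L_{loc}^\infty$ with compact support, and since $u^{-1}$ agrees on that support with an element of $\calD\cap L^\infty$, Theorem \ref{thm:algebraic_structure} shows $\phi\psi u^{-1}\in\calD\cap L_c^\infty$; membership in $\calD(\nu_+)$ then follows from the q.e.\ bound $|\widetilde{\phi\psi u^{-1}}|^2 \leq \|\psi u^{-1}\|_\infty^2\,\tilde\phi^2$ and $\phi\in\calD(\nu_+)$. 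The main technical obstacle throughout is justifying the absolute convergence of the integrals that appear when the Leibniz rule is used to split the various terms; this is exactly what the $\calD_{loc}^*(h)$ hypotheses, the compact support of $\phi\psi u^{-1}$, and the Cauchy--Schwarz inequality for $\mu^{(*)}$ are designed to supply.
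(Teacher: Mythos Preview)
Your argument is correct and follows the same overall scheme as the paper: test the eigenfunction equation against $\phi\psi u^{-1}$ and combine with an algebraic identity expressing $\calE(\phi,\psi)-\calE(u,\phi\psi u^{-1})$ as $\int u(x)u(y)\,\rmd\Gamma(\phi u^{-1},\psi u^{-1})$. The difference is in how that identity is derived. The paper works uniformly with $\Gamma$ and the Leibniz rule (Theorem~\ref{Leibniz}), first using $\Gamma(u,1)=0$ to relate $\Gamma(u,u)$ and $\Gamma(u,u^{-1})$, and then applying Leibniz repeatedly; it also normalizes $\lambda=0$ and $k=0$ at the outset. You instead split off the jump part and verify the four-term pointwise identity directly, reserving Leibniz for the strongly local part, and you keep track of the killing measure explicitly via $u\cdot(\phi\psi u^{-1})=\phi\psi$. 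Your route is a bit more elementary and makes the non-local computation transparent; the paper's route is more uniform across both parts of the form. For the second assertion your verification of the membership hypotheses via Proposition~\ref{thm:products}(a) and Theorem~\ref{thm:algebraic_structure} is more explicit than the paper's one-line appeal to Theorem~\ref{thm:products}, but amounts to the same thing.
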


Here, if $F$ is a space of functions on $X$, $F_c$ denotes the subset of elements in $F$ with compact support.

\begin{proof}
 We follow the argument given in \cite{LenzSV}. Without loss of generality we assume $\lambda = 0$ and $k = 0$. The Leibniz rule gives
\[ 0 = \Gamma(u, 1) = \Gamma(u, u u^{-1}) = u^{-1}(x) \Gamma (u,u) + u(y) \Gamma (u, u^{-1}). \]
 Using the fact that $u$ is a generalized eigenfunction, the Leibniz rule and the preceeding formula we can calculate
 \begin{align*}
 h(\phi,\psi) &= \calE (\phi,\psi) + \nu (\phi, \psi)\\
 &=  \calE(\phi, \psi) + \nu(\phi \psi u^{-1}, u) \\
 &= \calE(\phi, \psi) - \calE(\phi \psi u^{-1}, u) \\
 &= \calE(\phi, \psi) - \sint{X \times X} u(x) u(x)^{-1} \: \rmd \Gamma(\phi \psi u^{-1}, u) \\
 &= \calE(\phi, \psi) + \sint{X \times X} u(x) u(y) \: \rmd \Gamma(\phi \psi u^{-1}, u^{-1}) \\
 &= \calE(\phi, \psi) + \sint{X \times X} u(x) u(y) \phi(x) \: \rmd \Gamma(\psi u^{-1}, u^{-1}) + \sint{X \times X} u(x) \psi(y) \: \rmd \Gamma(\phi, u^{-1}) \\
 &= \sint{X \times X} u(x) u(y) \phi(x) \: \rmd \Gamma(u^{-1}, \psi u^{-1}) + \sint{X \times X} u(x) \: \rmd \Gamma(\phi, \psi u^{-1}) \\
 &= \sint{X \times X} u(x) u(y) \: \rmd \Gamma(\phi u^{-1}, \psi u^{-1}).
 \end{align*}
 This gives the first statement. The last statement then follows by Theorem \ref{thm:products}.
\end{proof}

 The argument given above can be modified to give the following results. There, we do not need the assumptions $u>0$ and $u^{-1} \in \calD_{loc}^*$ but then have stronger restrictions on $\phi$ and $\psi$.

\begin{theorem}
 Let $h = \calE + \nu$ with $\nu^+ \in \calM_0$ and $\nu^- \in \calM_1$. Let $u$ be a generalized eigenfunction to the eigenvalue $\lambda$. Then,
 \[ h(u \phi, u \psi) - \lambda (u \phi, u \psi) = \sint{X \times X} u(x) u(y) \: \rmd \Gamma(\phi, \psi) \]
 for all $\phi, \psi \in \calD (h) \cap L_c^\infty$ whenever $u \phi, u \psi, u \phi \psi \in \calD (h)$.
 In particular, the formula holds for all $\phi, \psi\in \calD (h) \cap L_c^\infty$ if  $u \in L_{loc}^\infty$.
\end{theorem}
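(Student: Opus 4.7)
The plan is to mimic the ground-state argument of the previous theorem, but to test the eigenfunction equation against $u\phi\psi$ rather than to divide by $u$. Since $\phi,\psi\in L^\infty_c$, the product $u\phi\psi$ is compactly supported, and by hypothesis it lies in $\calD(h)$; thus it is an admissible test function. The generalized eigenfunction equation then gives
\[
 h(u,u\phi\psi) = \lambda(u,u\phi\psi) = \lambda(u\phi,u\psi),
\]
so the left hand side of the desired identity is exactly $h(u\phi,u\psi)-h(u,u\phi\psi)$. The task is then to identify this difference with $\sint{X\times X} u(x)u(y)\,\rmd\Gamma(\phi,\psi)$.

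Next I would decompose $h=\calE^{(c)}+\calE^{(b)}+\calE^{(a)}+\nu$ and note that both $\calE^{(a)}$ and $\nu$ are of multiplicative form $\int \widetilde{(\cdot)}\widetilde{(\cdot)}\,\rmd\mu$, so evaluated on $(u\phi,u\psi)$ and on $(u,u\phi\psi)$ they produce the same integrand $\tilde u^2\tilde\phi\tilde\psi$ and cancel in the difference. Everything reduces to the $\mu^{(d)}=\mu^{(c)}+\mu^{(b)}$ contributions. For the strongly local part I apply Theorem~\ref{Leibniz}(b) iteratively (legitimate because all products in sight lie in $\calD_{loc}^*$); after cancellation of the cross-terms involving $\rmd\mu^{(c)}(u,\phi)$, $\rmd\mu^{(c)}(u,\psi)$ and $\rmd\mu^{(c)}(u,u)$ one is left with the clean identity
\[
 \rmd\mu^{(c)}(u\phi,u\psi) - \rmd\mu^{(c)}(u,u\phi\psi) \;=\; \tilde u^{\,2}\,\rmd\mu^{(c)}(\phi,\psi),
\]
which is precisely the diagonal part of $u(x)u(y)\,\rmd\Gamma(\phi,\psi)$.

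For the jump part the difference of integrands is purely algebraic: writing $A=\tilde u(x)$, $B=\tilde u(y)$, $P=\tilde\phi(x)$, $Q=\tilde\phi(y)$, $R=\tilde\psi(x)$, $S=\tilde\psi(y)$, one checks
\[
 (AP-BQ)(AR-BS)-(A-B)(APR-BQS) \;=\; AB\,(P-Q)(R-S).
\]
Integrating against $J(\rmd x,\rmd y)$ yields exactly the off-diagonal part of $\sint{X\times X} u(x)u(y)\,\rmd\Gamma(\phi,\psi)$. Summing the strongly local and jump contributions reconstitutes the right hand side of the theorem.

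The principal obstacle is bookkeeping: verifying that every intermediate integral converges absolutely and that the products entering the Leibniz rule all belong to $\calD_{loc}^*$. Here the hypotheses $u\phi,u\psi,u\phi\psi\in\calD(h)$ together with the Cauchy--Schwarz inequality for the measures $\mu^{(*)}$ and the compactness of $\supp\phi$ and $\supp\psi$ supply the needed integrability. For the ``in particular'' clause, Proposition~\ref{thm:products}(a) gives $u\phi, u\psi, u\phi\psi \in \calD_{loc}^*$ once $u\in L^\infty_{loc}$, whereupon their compact support places them in $\calD$; a local representative of $u$ in $\calD(h)$ then yields $\nu^+$-integrability, so the auxiliary hypotheses are automatic.
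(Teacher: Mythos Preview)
Your proposal is correct and follows essentially the same route as the paper: test the eigenfunction equation against $u\phi\psi$ and then identify $h(u\phi,u\psi)-h(u,u\phi\psi)$ with the $\Gamma$-integral. The only cosmetic difference is that the paper carries out the whole computation in the $\Gamma$-formalism via repeated applications of the Leibniz rule (Theorem~\ref{Leibniz}(a)), whereas you split into the strongly local and jump parts, handling the latter by the bare algebraic identity $(AP-BQ)(AR-BS)-(A-B)(APR-BQS)=AB(P-Q)(R-S)$; this is exactly what the $\Gamma$-Leibniz rule encodes.
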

\begin{proof} Without loss of generality we can assume $k=0$ and $\lambda =0$. Using the Leibniz rule repeatedly we calculate
\begin{align*}
 \calE(u \phi, u \psi) + \nu(u \phi, u \psi) &= \int_X \: \rmd \mu^{(d)}(u \phi, u \psi) + \nu(u, u \phi \psi) \\
 &= \int_X u \: \rmd \mu^{(d)}(\phi, u \psi) + \int_X \phi \: \rmd \mu^{(d)}(u, u \psi) + \nu(u, u \phi \psi) \\
 &= \sint{X \times X} u(x) u(y) \: \rmd \Gamma(\phi, \psi) + \sint{X \times X} u(x) \psi(x) \: \rmd \Gamma(\phi, u) \\
 &\qquad + \int_X \: \rmd \mu^{(d)}(u, u \phi \psi) - \int_X u \psi \: \rmd \mu^{(d)}(u, \phi) + \nu(u, u \phi \psi) \\
 &= \sint{X \times X} u(x) u(y) \: \rmd \Gamma(\phi, \psi) + \calE (u,u\phi\psi) + \nu(u, u \phi \psi) \\
 &= \sint{X \times X} u(x) u(y) \: \rmd \Gamma(\phi, \psi).
 \end{align*}
 In the last step we used that $u$ is a generalized eigenfunction. This finishes the proof.
\end{proof}

\begin{remark}
 Let us note that the right hand side in the formula has the sub-Markovian property if $u \geq 0$. This observation plays a crucial role in the proof of spectral estimates for the perturbed form $h$ in \cite{FLS}.
\end{remark}

An immediate consequence of the first theorem of this section is the following Allegretto-Piepenbrink-type result.

\begin{corollary}
Let $h = \calE + \nu$ with $\nu^+ \in \calM_0$ and $\nu^- \in \calM_1$ and $\calE$ a regular Dirchlet form. Let $u\geq 0 $ be a generalized eigenfunction to the eigenvalue $\lambda$ with $u^{-1} \in \calD_{loc}^* \cap L_{loc}^\infty$. Then $h\geq \lambda$.
\end{corollary}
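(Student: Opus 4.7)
The plan is to reduce the corollary directly to the first theorem of the section, specialized to $\phi=\psi$, and then to upgrade the resulting inequality from a subspace of test functions to the full form domain by an approximation argument.

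First I would note that the hypothesis $u^{-1}\in \calD_{loc}^*\cap L_{loc}^\infty$ together with $u\geq 0$ places us exactly in the setting of the last clause of the first theorem of Section \ref{Ground}. Applying that theorem with $\phi=\psi\in\calD(h)\cap L^\infty_c$ gives
\[
 h(\phi)-\lambda\,(\phi,\phi) \;=\; \sint{X\times X} u(x)\,u(y)\, \rmd\Gamma(\phi u^{-1},\phi u^{-1}).
\]
By the very definition of $\Gamma(v,v)$ in Section \ref{Leibnizrule}, the measure $\Gamma(\phi u^{-1},\phi u^{-1})$ is the sum of the non-negative measure $\mu^{(c)}(\phi u^{-1})$ on the diagonal and a measure on $X\times X-d$ whose density against $J$ is the square $(\phi u^{-1}(x)-\phi u^{-1}(y))^2\ge 0$. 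Since $u\ge 0$ on $X$, the integrand $u(x)u(y)$ is non-negative as well, so the right hand side is $\ge 0$. Consequently
\[
 h(\phi) \;\geq\; \lambda \,\|\phi\|_{L^2}^2 \qquad \text{for all } \phi\in\calD(h)\cap L^\infty_c.
\]

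The remaining step is to extend this inequality to every $\phi\in\calD(h)$. For this I would appeal to a standard cut-off and truncation argument: given $\phi\in\calD(h)$, approximate it in the form norm $\sqrt{h_1}$ by the sequence $\phi_n:=((\phi\wedge n)\vee(-n))\cdot \chi_n$, where $\chi_n\in\calD\cap C_c(X)$ is a cut-off with $\chi_n\equiv 1$ on an exhaustion of $X$. The Dirichlet property together with Theorem \ref{thm:algebraic_structure} ensures that $\phi_n\in \calD\cap L^\infty_c$, while the fact that $\nu^+\in\calM_0$ (i.e.\ it charges no polar set) and $\nu^-\in\calM_1$ (relatively form-bounded with bound less than one) yields convergence $\phi_n\to\phi$ with respect to $\sqrt{h_1}$. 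Passing to the limit in $h(\phi_n)\ge \lambda\|\phi_n\|^2$ gives $h(\phi)\ge \lambda\|\phi\|^2$, which is the desired bound $h\ge \lambda$.

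The main obstacle I foresee is the density assertion that $\calD(h)\cap L^\infty_c$ is a form core for $h$. The $L^\infty$-truncation causes no difficulty (a normal contraction reduces $\calE$ and preserves membership in $L^2(\nu^+)$ because truncation only decreases $|\tilde u|$ q.e.), but multiplying by a compactly supported cut-off requires some care to control the product in the energy: one has to verify that the product $\phi\chi_n$ converges in $\sqrt{\calE_1}$ and also that $\int \widetilde{\phi\chi_n}^2\,\rmd\nu^+\to \int\tilde\phi^2\,\rmd\nu^+$ by dominated convergence (the quasi-continuous representatives are chosen consistently, and $|\widetilde{\phi\chi_n}|\le |\tilde\phi|$ q.e.). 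Once these approximation details are settled, the ground state representation together with the sign of $u$ gives the Allegretto–Piepenbrink bound immediately.
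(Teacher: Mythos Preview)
Your proposal is correct and follows essentially the same route as the paper: apply the ground state representation with $\phi=\psi$, observe that the right hand side is non-negative because $u\ge 0$ and $\Gamma(v,v)\ge 0$, and then pass from $\calD(h)\cap L^\infty_c$ to all of $\calD(h)$ by density. The paper dispatches the density step in a single line (``such $\phi$ are dense in the domain by regularity''), whereas you spell out the truncation/cut-off argument and flag the control of the $\nu^+$-part; this extra care is appropriate, but the underlying strategy is identical.
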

\begin{proof} By the first theorem of this section we have $h(\phi)\geq \lambda \|\phi\|^2$ for all $\phi \in \calD \cap L_c^\infty$. As such $\phi$ are dense in the domain by regularity, the statement follows.
\end{proof}

\section{A Caccioppoli-type inequality}\label{sec:caccio}
In this section we will estimate the energy measure of generalized eigenfunctions. For strongly local Dirichlet forms a version can be found in \cite{BoutetdeMonvelLS-08}. Our discussion is similar to the discussion therein.

\bigskip

\begin{theorem}\label{caccio}
 Let $\calE$ be a regular Dirichlet form, $\nu^+ \in \calM_0$ and $\nu^- \in \calM_1$ and $q\in (0,1)$ with $\nu^-(u) \leq q \calE (u) + C_q \| u \|^2$ be given and set $h = \calE + \nu_+ - \nu_-$. Then, for any $\lambda \in \RR$, there  exists a constant $C = C(\lambda, \nu^-)$ with
 \[ \int_X \eta^2 \: \rmd \mu^{(d)}(u) \leq C(\lambda, \nu^-) \left( \| u \eta \|^2 + \int_X \tilde{u}^2 \: \rmd \mu^{(d)}(\eta) \right) \]
 for any $u \in \calD_{loc}^*$, $\eta \in \calD \cap C_c(X)$ with $\eta u, \eta^2 u \in \calD$ and $h(u, u \eta^2) \leq \lambda(u, u \eta^2)$.
\end{theorem}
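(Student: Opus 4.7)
The plan is to apply the Leibniz rule to $\calE^{(d)}(u, u\eta^2)$ so that the quantity $\int \eta^2 \,\rmd\mu^{(d)}(u)$ emerges on the right-hand side, and then to use the generalized-eigenfunction inequality together with the form-boundedness of $\nu^-$ to close the estimate after controlling the cross terms by Cauchy--Schwarz and Young.

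The Leibniz rule (Theorem \ref{Leibniz}(a)) applied with $u\eta^2 = u \cdot \eta^2$ yields
\[
\calE^{(d)}(u, u\eta^2) = \int \tilde u(x)\, \rmd\Gamma(\eta^2, u) + \int \eta(y)^2\, \rmd\Gamma(u, u).
\]
Since $J$ is symmetric in $(x,y)$ and $\mu^{(c)}$ is strongly local, the second integral equals $\int \eta^2\, \rmd\mu^{(d)}(u)$, producing the key identity
\[
\int \eta^2\, \rmd\mu^{(d)}(u) = \calE^{(d)}(u, u\eta^2) - \int \tilde u(x)\, \rmd\Gamma(\eta^2, u).
\]

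For the cross term, a second application of Leibniz to $\eta^2 = \eta\cdot\eta$ converts it into $\int \tilde u(x)(\eta(x) + \eta(y))\, \rmd\Gamma(\eta, u)$. Its $\mu^{(c)}$-part equals $2\int u\eta\, \rmd\mu^{(c)}(u,\eta)$ and is controlled via Cauchy--Schwarz by $\epsilon \int \eta^2\, \rmd\mu^{(c)}(u) + \epsilon^{-1} \int u^2\, \rmd\mu^{(c)}(\eta)$. The $J$-part, after symmetrization in $x \leftrightarrow y$, becomes $\tfrac{1}{2}\int (u(x)^2 - u(y)^2)(\eta(x)^2 - \eta(y)^2)\, J(\rmd x, \rmd y)$; factoring $a^2 - b^2 = (a-b)(a+b)$ and applying Young's inequality together with $(a+b)^2 \leq 2(a^2+b^2)$ yields the analogous bound with $\mu^{(b)}$ in place of $\mu^{(c)}$. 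Combining,
\[
(1-\epsilon)\int \eta^2\, \rmd\mu^{(d)}(u) \leq \calE^{(d)}(u, u\eta^2) + \epsilon^{-1}\int u^2\, \rmd\mu^{(d)}(\eta).
\]
The eigenfunction inequality together with $\nu^+(u\eta) \geq 0$ gives $\calE(u, u\eta^2) \leq \lambda\|u\eta\|^2 + \nu^-(u\eta)$, and the form-bound $\nu^-(u\eta) \leq q\calE(u\eta) + C_q\|u\eta\|^2$ combined with the decomposition $\calE^{(d)} = \calE - \calE^{(a)}$ applied to both sides produces a cancellation of the killing contribution, yielding $\calE^{(d)}(u, u\eta^2) \leq (\lambda + C_q)\|u\eta\|^2 + q\, \calE^{(d)}(u\eta)$. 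A parallel Leibniz--symmetrization computation bounds $\calE^{(d)}(u\eta) \leq (1+\epsilon')\int \eta^2\, \rmd\mu^{(d)}(u) + (1 + 1/\epsilon')\int u^2\, \rmd\mu^{(d)}(\eta)$. Inserting this, the coefficient of $\int \eta^2\, \rmd\mu^{(d)}(u)$ on the right becomes $\epsilon + q(1+\epsilon')$; since $q < 1$ one may choose $\epsilon, \epsilon' > 0$ so small that this is strictly less than $1$, and absorbing the term to the left yields the Caccioppoli estimate with $C(\lambda, \nu^-)$ depending only on $\lambda$, $q$, and $C_q$.

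The main obstacle will be the jump portion of the cross term. Unlike in the local case no direct Leibniz-type cancellation is available, and the symmetrization of $u(x)(u(x) - u(y))(\eta(x)^2 - \eta(y)^2)$ into $\tfrac{1}{2}(u(x)^2 - u(y)^2)(\eta(x)^2 - \eta(y)^2)$ must be paired with a carefully placed Young-type inequality --- splitting the product of the four differences $(u(x)\pm u(y))(\eta(x)\pm \eta(y))$ in the appropriate way --- so that the remainder falls cleanly into pieces dominated by $\mu^{(b)}(u)$ (absorbable on the left) and $\mu^{(b)}(\eta)$ (kept on the right).
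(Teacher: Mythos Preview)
Your argument is correct and follows essentially the same route as the paper: expand $\calE(u,u\eta^2)$ and $\calE(u\eta)$ via the Leibniz rule, invoke the subsolution inequality together with the form bound $\nu^-(u\eta)\leq q\,\calE(u\eta)+C_q\|u\eta\|^2$, control the cross terms by Cauchy--Schwarz/Young, and absorb using $q<1$. The only cosmetic difference is that the paper merges the two cross terms into a single expression $\int u(x)\bigl(-\eta(x)+(2q-1)\eta(y)\bigr)\,\rmd\Gamma(u,\eta)$ before applying Cauchy--Schwarz, whereas you treat them separately and use the symmetrization $\tfrac12\int(u(x)^2-u(y)^2)(\eta(x)^2-\eta(y)^2)\,J$ for the jump part; both close in the same way.
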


\begin{proof}
 W.l.o.g. $k = 0$. The Leibniz rule from Subsection \ref{Leibnizrule} yields
 \[ \lambda \| u \eta \|^2 - \nu(u \eta) \geq \calE(u, u \eta^2)
 = \sint{X \times X} \eta^2(x) \: \rmd \Gamma(u) + \sint{X \times X} u(x) (\eta(x) + \eta(y)) \: \rmd \Gamma(u, \eta), \]
and, by assumption, we have
 \[ - \nu(u \eta) \leq q \calE(\eta u ) + C_q \| u \eta \|^2. \]
 Finally, Leibniz rule again shows
 \[ \calE(\eta u) = \sint{X \times X} \eta^2(x) \: \rmd \Gamma(u) + 2 \sint{X \times X} \tilde{u}(x) \eta(y) \: \rmd \Gamma(u, \eta) + \sint{X \times X} \tilde{u}(x)^2 \: \rmd \Gamma(\eta ). \]
 Let us now assume the last integral to be finite (otherwise the claim is still true). We now set
 \[ T:= (1-q) \sint{X \times X} \eta^2(x) \: \rmd \Gamma(u). \]
 Putting everything together we can estimate
 \begin{small}
 \begin{align*}
 T &\leq (\lambda + C_q) \| u \eta \|^2 + q \sint{X \times X} \tilde{u}(x)^2 \: \rmd \Gamma(\eta )
 + \sint{X \times X} \tilde{u}(x) (-\eta(x) + (2 q - 1) \eta(y)) \: \rmd \Gamma(u, \eta)\\
  &\leq (\lambda + C_q) \| u \eta \|^2 + (q+\frac{1}{4S}) \sint{X \times X} \tilde{u}(x)^2 \: \rmd \Gamma(\eta) + S \sint{X \times X} (-\eta(x) + (2 q - 1) \eta(y))^2 \: \rmd \Gamma(u ) \\
 &\leq (\lambda + C_q) \| u \eta \|^2 + (q+\frac{1}{4S}) \sint{X \times X} \tilde{u}(x)^2 \: \rmd \Gamma(\eta) + 4 S \max(q, 1-q)^2 \sint{X \times X} \eta(x)^2 \: \rmd \Gamma(u )
 \end{align*}
 \end{small}
 for all $S > 0$.
\end{proof}

The bound takes a simpler form if $\calE$ has finite jump size.

\begin{corollary} Assume the situation of the previous theorem.
 Let $u$ be a generalized eigenfunction for $h$ to the generalized eigenvalue $\lambda$. Assume further the jump size of $\calE$ to be $s < \infty$ and let $a > 0$ be such that $B_a(E)$ is relatively compact. Then
 \[ \int_E \: \rmd \mu^{(d)}(u) \leq C(\lambda, \nu^-) (1 + \frac{1}{a^2}) \| u \One_{B_{s+a}(E)} \|^2. \]
\end{corollary}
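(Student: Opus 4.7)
The plan is to apply the Caccioppoli inequality (Theorem \ref{caccio}) with the cut-off function $\eta := \eta_{E,a} = (1 - \varrho_E/\,a)^+$ introduced in Definition \ref{cutoff}. Since $\eta \equiv 1$ on $E$ and $\supp \eta \subset B_a(E)$, we have $\int_E \rmd\mu^{(d)}(u) \leq \int_X \eta^2 \rmd\mu^{(d)}(u)$, so it suffices to bound the right-hand side of the Caccioppoli estimate by a constant multiple of $\|u\One_{B_{s+a}(E)}\|^2$.

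To apply Theorem \ref{caccio} I first need to check its hypotheses. By Proposition \ref{cutoffprop}, the assumption that $B_a(E)$ is relatively compact yields $\eta \in \calD\cap C_c(X)$. Proposition \ref{hilfsaussage} gives the crucial bound $a^2\mu^{(d)}(\eta) \leq \One_{A_{s+a}(E)}m$; in particular, $\mu^{(d)}(\eta)$ is absolutely continuous with respect to $m$ with density bounded by $1/a^2$. Since $u\in\calD_{loc}^*(h)$, we may pick $v\in\calD(h)$ agreeing with $u$ on a neighborhood of $\supp\eta$; then $\eta u = \eta v$ and $\eta^2 u = \eta^2 v$, and Proposition \ref{thm:products}(b) applied (iteratively, using also part (c) to handle $\eta^2$) shows $\eta u, \eta^2 u \in \calD$. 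Compactness of $\supp\eta$ gives that these products also lie in $\calD(\nu^+)$, hence in $\calD(h)$. The equality $h(u,u\eta^2) = \lambda(u,u\eta^2)$ (and hence the required inequality) is then just the generalized eigenfunction property applied to the compactly-supported test function $u\eta^2 \in \calD(h)$.

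Theorem \ref{caccio} now delivers
\[
\int_X \eta^2 \rmd\mu^{(d)}(u) \;\leq\; C(\lambda,\nu^-)\Bigl(\|u\eta\|^2 + \int_X \tilde u^2 \rmd\mu^{(d)}(\eta)\Bigr).
\]
The first term on the right is controlled by $\|u\eta\|^2 \leq \|u\One_{B_a(E)}\|^2 \leq \|u\One_{B_{s+a}(E)}\|^2$ since $\supp\eta\subset B_a(E)\subset B_{s+a}(E)$. For the second term, Proposition \ref{hilfsaussage} gives
\[
\int_X \tilde u^2 \rmd\mu^{(d)}(\eta) \;\leq\; \frac{1}{a^2}\int_{A_{s+a}(E)} \tilde u^2 \rmd m \;\leq\; \frac{1}{a^2}\|u\One_{B_{s+a}(E)}\|^2,
\]
using $A_{s+a}(E) \subset B_{s+a}(E)$. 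Combining the two estimates yields the claimed bound with factor $(1+1/a^2)$.

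The main obstacle is the bookkeeping in verifying that $u\eta$ and $u\eta^2$ lie in $\calD(h)$ rather than merely in $\calD_{loc}^*(h)$; this is what makes $u\eta^2$ a legitimate test function against $u$ in the generalized eigenfunction identity. The bounded-density property of $\mu^{(d)}(\eta)$ supplied by the finite-jump-size hypothesis (via Proposition \ref{hilfsaussage}) is exactly what allows Proposition \ref{thm:products}(b) to be invoked, so the finite jump size enters twice: once to make $\eta$ an admissible multiplier in the product results, and once, more visibly, to restrict the $\mu^{(d)}(\eta)$-integration from all of $X$ to the intrinsic annulus $A_{s+a}(E) \subset B_{s+a}(E)$.
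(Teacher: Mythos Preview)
Your proof is correct and follows exactly the same approach as the paper: apply Theorem~\ref{caccio} with $\eta=\eta_{E,a}$ and then invoke Proposition~\ref{hilfsaussage} to bound $\int \tilde u^2\,\rmd\mu^{(d)}(\eta)$ by $a^{-2}\|u\One_{A_{s+a}(E)}\|^2$. The paper's proof is a one-liner, so your careful verification of the hypotheses of Theorem~\ref{caccio} (in particular that $u\eta,\,u\eta^2\in\calD(h)$ via the bounded-density property of $\mu^{(d)}(\eta)$ and Proposition~\ref{thm:products}(b)) fills in details the paper simply leaves implicit.
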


\begin{proof}
 Apply the previous theorem with $\eta = \eta_{E, a}$ from Definition \ref{cutoff} and use Proposition \ref{hilfsaussage}.
\end{proof}

\section{A Shnol'-type inequality}

We first state a Shnol'-type inequality for non-local Dirichlet forms with not necessarily finite jump size. This inequality, together with a Weyl-type characterization of the spectrum will be our main tool in the next section for characterizing of the spectrum in terms of generalized eigenfunctions. The proof we present here mimics the proof in \cite{BoutetdeMonvelLS-08} for local Dirichlet forms.

\bigskip

Recall that $\eta_{E, a}$ has been introduced in Definition \ref{cutoff} above. Moreover, recall that $\calE_1 (u) = \calE (u) + \|u\|^2$.

\begin{theorem}\label{thm:shnol_inf}
 Let $h = \calE + \nu$ with $\nu^+ \in \calM_0$ and $\nu^- \in \calM_1$, and let $u$ be a generalized eigenfunction of $h$ corresponding to a generalized eigenvalue $\lambda\in\RR$. Let $\varrho$ be an intrinsic metric and let $E \subset X$, $a, s > 0$ such that $B_{2a + s} (E)$ is relatively compact. Put $\eta_1 := \eta_{E, a}$ and $\eta_2 := \eta_{A_{a+s} (E), a}$. Then there is a constant $C = C (\lambda,a,s,\nu_-)$ such that for any $v \in \calD(h)$
 \[ \bigl| (h - \lambda) (u \eta_1^2, v) \bigr|^2 \leq C \calE_1(v ) \left( \int_X \tilde{u}^2 \: \rmd \mu^{(b)}(\eta_1 ) + \int_X \tilde{u}^2 \: \rmd \mu^{(b)}( \eta_2) + \| u \One_{A_{2a+s}(E)} \|^2 \right). \]
\end{theorem}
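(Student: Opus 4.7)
My plan is to exploit the fact that $\eta_1^2 v$ has compact support contained in the relatively compact ball $B_{2a+s}(E)$ and belongs to $\calD(h)$ (via Proposition \ref{thm:products}). The generalized eigenfunction property then gives $(h-\lambda)(u, \eta_1^2 v) = 0$. Since the $k$, $\nu$, and $L^2$ contributions treat $u\eta_1^2$ tested against $v$ and $u$ tested against $\eta_1^2 v$ in the same way, they cancel in the difference $(h-\lambda)(u\eta_1^2, v) - (h-\lambda)(u,\eta_1^2 v)$, and one is left with
\[ (h-\lambda)(u\eta_1^2, v) = \calE^{(d)}(u\eta_1^2, v) - \calE^{(d)}(u, \eta_1^2 v). \]

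To unfold this commutator I apply the Leibniz rule in $\Gamma$-form (Theorem \ref{Leibniz}) to both products $u\cdot\eta_1^2$ and $\eta_1^2\cdot v$. Using the $(x,y)$-symmetry of the measure $\Gamma(u,v)$ the $\rmd\Gamma(u,v)$-contributions cancel, and the Leibniz identity $\rmd\Gamma(\eta_1^2,\cdot)=(\eta_1(x)+\eta_1(y))\rmd\Gamma(\eta_1,\cdot)$ leads to the manifest commutator shape
\[ (h-\lambda)(u\eta_1^2, v) = \int u(x)\bigl(\eta_1(x)+\eta_1(y)\bigr)\rmd\Gamma(\eta_1,v) - \int v(x)\bigl(\eta_1(x)+\eta_1(y)\bigr)\rmd\Gamma(\eta_1,u). \]

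The first summand is handled by a direct Cauchy-Schwarz for $\Gamma(\eta_1,v)$: it is bounded by $\calE^{(d)}(v)\cdot \int u^2(\eta_1(x)+\eta_1(y))^2\,\rmd\Gamma(\eta_1)$. Using $(\eta_1(x)+\eta_1(y))^2\leq 4$ on the non-local part yields the contribution $4\int u^2\,\rmd\mu^{(b)}(\eta_1)$, and the strongly local part is absorbed into $(4/a^2)\|u\One_{A_{2a+s}(E)}\|^2$ via the intrinsic-metric bound $\mu^{(c)}(\eta_1)\leq (1/a^2)\One_{A_a(E)}m$ from Proposition \ref{cutoffprop}. This already accounts for the $\mu^{(b)}(\eta_1)$ and $\|u\One\|^2$ parts on the right-hand side.

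The main obstacle is the second summand, in which $u$ occupies a slot normally reserved for a test function while having only locally finite energy; moreover, the jumps encoded in $J$ may be arbitrarily long, so the commutator is not a priori localized to $\supp\eta_1$ alone. This is the reason for the second cutoff $\eta_2$, chosen to enlarge $\eta_1$ so as to capture the $J$-mass escaping from $\supp\eta_1$. After a further Cauchy-Schwarz that extracts a factor $\|v\|^2$ from the $v$-side (using $\mu^{(d)}(\eta_1)\leq m$), the remaining $u$-dependent integral is controlled via the Caccioppoli-type inequality of Theorem \ref{caccio} applied with cutoff $\eta_2$. This application is legitimate since $\supp\eta_2\subset A_{2a+s}(E)$ is relatively compact, $u\eta_2$ and $u\eta_2^2$ lie in $\calD$ by Proposition \ref{thm:products} (as $\mu^{(d)}(\eta_2)\leq (1/a^2) m$ has bounded density), and the eigenfunction property gives $h(u,u\eta_2^2)=\lambda(u,u\eta_2^2)$. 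Caccioppoli then produces a bound involving $\|u\eta_2\|^2 + \int u^2\,\rmd\mu^{(d)}(\eta_2)$; splitting the last term, the strongly local contribution $\int u^2\,\rmd\mu^{(c)}(\eta_2)$ is again absorbed into $\|u\One_{A_{2a+s}(E)}\|^2$ since $\supp\mu^{(c)}(\eta_2)\subset A_{2a+s}(E)$, while $\int u^2\,\rmd\mu^{(b)}(\eta_2)$ survives as the second explicit summand on the right-hand side. Combining all estimates yields the stated inequality with $C=C(\lambda,a,s,\nu^-)$.
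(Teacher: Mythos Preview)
Your reduction to the commutator and your treatment of the first summand are correct and match the paper. The gap is in the second summand
\[
\int_{X\times X} v(x)\bigl(\eta_1(x)+\eta_1(y)\bigr)\,\rmd\Gamma(\eta_1,u).
\]
The Cauchy--Schwarz you describe, putting $v(x)$ against $\rmd\Gamma(\eta_1)$ and the rest against $\rmd\Gamma(u)$, leaves you with
\[
\int_{X\times X}\bigl(\eta_1(x)+\eta_1(y)\bigr)^2\,\rmd\Gamma(u)\ \leq\ 4\int_X \eta_1^2\,\rmd\mu^{(d)}(u).
\]
This quantity is \emph{not} localized to the annulus $A_{a+s}(E)$: since $\eta_1\equiv 1$ on $E$, it dominates $\int_E \rmd\mu^{(d)}(u)$. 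But $\eta_2=\eta_{A_{a+s}(E),a}$ vanishes on the deep interior of $E$ (namely where $\varrho_{E^c}>2a+s$), so Caccioppoli with cutoff $\eta_2$ only controls $\int \eta_2^2\,\rmd\mu^{(d)}(u)$ and says nothing about $\int \eta_1^2\,\rmd\mu^{(d)}(u)$. If instead you used Caccioppoli with $\eta_1$, the right-hand side would contain $\|u\eta_1\|^2\geq\|u\One_E\|^2$, which is not bounded by $\|u\One_{A_{2a+s}(E)}\|^2$. Either way, the desired inequality does not follow. (Your description of $\eta_2$ as an ``enlargement'' of $\eta_1$ is also off: $\eta_2$ is a cutoff for the annular set $A_{a+s}(E)$, not for a larger ball.)

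The missing idea is to split the second summand according to $\varrho(x,y)\leq s$ versus $\varrho(x,y)>s$; this is precisely the role of the free parameter $s>0$, which your argument never uses. On the short-range piece the constraint $\eta_1(x)\bigl(\eta_1(x)-\eta_1(y)\bigr)\neq 0$ together with $\varrho(x,y)\leq s$ forces $x\in A_{a+s}(E)$, so after Cauchy--Schwarz one obtains $\int_{A_{a+s}(E)}\rmd\mu^{(d)}(u)$, which \emph{is} controlled by Caccioppoli with $\eta_2$ (producing the $\int u^2\,\rmd\mu^{(b)}(\eta_2)$ and $\|u\One_{A_{2a+s}(E)}\|^2$ terms). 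On the long-range piece one uses Lemma~\ref{lem:4.8} to bound $\int_{\varrho>s} v(x)^2\,\rmd J\leq s^{-2}\|v\|^2$, while the $u$-side gives $\int u^2\,\rmd\mu^{(b)}(\eta_1)$. This splitting is what produces the two distinct $\mu^{(b)}$-terms in the statement.
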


\begin{proof}
 For convenience we omit the tilde on $u$ and $v$. Using the fact that $u$ is a generalized eigenfunction and Leibniz rule, we compute
 \begin{align*}
 \bigl(h - \lambda \bigr)(u \eta_1^2, v)
 & = \bigl(h - \lambda \bigr)(u \eta_1^2, v) - \bigl(h - \lambda \bigr)(u, \eta_1^2v) \\
 & = \int_X \: \rmd \mu^{(d)}(\eta_1^2 u, v) - \int_X \: \rmd \mu^{(d)}(u,\eta_1^2 v) \\
 &= \int_X u \: \rmd \mu^{(d)}(\eta_1^2, v) - \int_X v \: \rmd \mu^{(d)}(u, \eta_1^2) \\
 &= \sint{X \times X} \eta_1(x) ( u(x) + u(y) ) \: \rmd \Gamma(v, \eta_1)
 - \sint{X \times X} \eta_1(x) ( v(x) + v(y) ) \: \rmd \Gamma(u, \eta_1).
 \end{align*}
 The first term can be estimated with CSI by
 \[ 2 \calE(v)^{1/2} \left( \int_X u^2 \: \rmd \mu^{(d)}(\eta_1) \right)^{1/2}. \]
 We split the integration in the second term into the two regions $\varrho(x, y) \leq s$ and $\varrho(x, y) > s$. In order to bound the first part we use CSI and Proposition \ref{cutoffprop} and obtain
 \begin{align*}
 \left| \sint{\rho(x,y)\leq s} \eta_1(x) ( v(x) + v(y) ) \: \rmd \Gamma(u, \eta_1) \right|
 & \leq 2 \left( \int_{A_{a+s}(E)} \: \rmd \mu^{(d)}(u) \right)^{1/2} \left( \int_X v^2 \: \rmd \mu^{(d)}(\eta_1) \right)^{1/2} \\
 & \leq \frac{2}{a} \left(\int_{A_{a+s}(E)} \: \rmd \mu^{(d)}(u)\right)^{1/2} \left(\int_X v^2 \: \rmd m \right)^{1/2}.
 \end{align*}
 This is controlled by the Caccioppoli inequality, Theorem \ref{caccio}. The remaining term is given by
 \[ R := \sint{\varrho(x, y) > s} \eta_1(x) ( v(x) + v(y) ) \: \rmd \Gamma(u, \eta_1). \]
Using Cauchy-Schwarz and sorting the terms we can then estimate
\begin{align*}
 R^2 &\leq 4 \sint{\varrho(x, y) > s} v(x)^2 \: J(\rmd x, \rmd y) \cdot \sint{\varrho(x, y) > s} \eta_1(x)^2 (u(x) - u(y))^2 \: \rmd \Gamma(\eta_1, \eta_1) \\
 &\leq \frac{16}{s^2} \sint{\varrho(x, y) > s} v(x)^2 \varrho(x, y)^2 \: J(\rmd x, \rmd y) \cdot \sint{\varrho(x, y) > s} u(x)^2 \: \rmd \Gamma(\eta_1, \eta_1) \\
 \intertext{and with Lemma \ref{lem:4.8}}
 &\leq \frac{16}{s^2} \| v \|^2 \int_X u^2 \: \rmd \mu^{(d)}(\eta_1 ).
 \qedhere
\end{align*}
As $\mu^{(c)}$ is local, its contribution to $\mu^{(d)}$ can be estimated by the $L^2$-norm. This gives the desired statement.
\end{proof}

The bound takes a simpler form if $\calE$ has finite jump size.

\begin{corollary}\label{shnolcor}
Assume that $\calE$ has finite jump size $s < \infty$. Let $\varrho$ an intrinsic metric, $E \subset X$, $a>0$ such that $B_{2a+s}(E)$ is relatively compact. Put $\eta := \eta_{E, a}$. Let $u$ be a generalized eigenfunction for $h$ to the generalized eigenvalue $\lambda$ and let $v \in \calD(h)$. Then
\[ \bigl| h - \lambda \bigr| (u \eta^2, v) \leq C(a, q) \calE_1(v)^{1/2} \| \One_{A_{2s+2a}(E)} u \|. \]
\end{corollary}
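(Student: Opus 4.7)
The plan is to derive the corollary as a direct consequence of Theorem \ref{thm:shnol_inf} by using the finite jump size hypothesis together with Proposition \ref{hilfsaussage} to bound the two integrals $\int_X \tilde u^2 \, d\mu^{(b)}(\eta_i)$ appearing on the right-hand side in terms of the $L^2$-norm of $u$ localized to an intrinsic annulus around $E$.

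First I would apply Theorem \ref{thm:shnol_inf} with $\eta_1 = \eta_{E,a}$ and $\eta_2 = \eta_{A_{a+s}(E),a}$, which is legitimate since $B_{2a+s}(E)$ is relatively compact. Since $\mu^{(b)} \leq \mu^{(d)}$ as measures, Proposition \ref{hilfsaussage} (applied to each cut-off function with the same jump size $s$) gives
\[ a^2 \mu^{(b)}(\eta_1) \leq \One_{A_{s+a}(E)}\, m, \qquad a^2 \mu^{(b)}(\eta_2) \leq \One_{A_{s+a}(A_{a+s}(E))}\, m. \]
The corresponding integrals against $\tilde u^2$ are therefore controlled by $a^{-2}$ times the squared $L^2$-norm of $u$ on these two sets.

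Next I would verify the set inclusions $A_{s+a}(E) \subset A_{2a+2s}(E)$ and $A_{s+a}(A_{a+s}(E)) \subset A_{2a+2s}(E)$. The first is immediate from monotonicity of $A_r$ in $r$. For the second, suppose $x \in A_{s+a}(A_{a+s}(E))$; then there exists $y \in A_{a+s}(E) = B_{a+s}(E) \cap B_{a+s}(E^c)$ with $\varrho(x,y) \leq s+a$. Picking points $z_1 \in E$ and $z_2 \in E^c$ with $\varrho(y,z_i) \leq a+s$ and applying the triangle inequality yields both $\varrho(x,E) \leq 2(a+s)$ and $\varrho(x,E^c) \leq 2(a+s)$, i.e.\ $x \in A_{2a+2s}(E)$. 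Since also $A_{2a+s}(E) \subset A_{2a+2s}(E)$, all three terms on the right-hand side of the bound from Theorem \ref{thm:shnol_inf} are controlled by $(\text{const}(a))\,\|\One_{A_{2a+2s}(E)} u\|^2$.

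Substituting these bounds and taking a square root gives the corollary with a constant that is a polynomial in $a^{-1}$ times the constant of Theorem \ref{thm:shnol_inf} (whose dependence on $s$, $\lambda$, and $\nu^-$ is absorbed into what the statement abbreviates as $C(a,q)$). I do not anticipate a genuine obstacle; the one place where some care is needed is the inclusion $A_{s+a}(A_{a+s}(E)) \subset A_{2a+2s}(E)$, where one has to use both halves of the definition of the intrinsic boundary (the $B_r(E)$ part and the $B_r(E^c)$ part) rather than merely the triangle inequality for $\varrho_E$.
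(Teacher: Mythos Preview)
Your proposal is correct and follows exactly the approach the paper indicates: the paper's entire proof reads ``This follows from the previous theorem and Proposition \ref{hilfsaussage},'' and your write-up simply fills in the details of how Proposition \ref{hilfsaussage} converts the $\mu^{(b)}$-integrals into localized $L^2$-norms and how the resulting annuli fit inside $A_{2a+2s}(E)$. Your verification of the inclusion $A_{s+a}(A_{a+s}(E)) \subset A_{2a+2s}(E)$ is a useful detail that the paper leaves implicit.
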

\begin{proof} This follows from the previous theorem and Proposition \ref{hilfsaussage}.
\end{proof}

\section{The spectrum and generalized eigenfunctions}

Using the Shnol'-type inequality we shall prove that under certain conditions a generalized eigenvalue will be in the spectrum. First, we recall a Weyl-type criterion for the spectrum of a self-adjoint, semibounded operator.

\begin{lemma}[\cite{Stollmann-01}]
 Let $h$ be a closed form, bounded from below by a constant $C$, and let $H$ be the associated self-adjoint operator. Then the following assertions are equivalent:
 \begin{enumerate}
 \item[(i)] $\lambda \in \sigma(H)$.
 \item[(ii)] There are $u_n \in \calD(h)$ with $\|u_n\| = 1$ and
 \[ \sup_{\substack{v \in \calD(h) \\ h_{C+1}(v, v) \leq 1}} \bigl|(h - \lambda) \bigr(u_n, v)| \rightarrow 0 \; (n \rightarrow \infty). \]
 \end{enumerate}
\end{lemma}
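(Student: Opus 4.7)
The plan is to reduce the statement to the standard Weyl criterion for a bounded self-adjoint operator on $L^2$, by identifying the topological dual of $(\calD(h), \sqrt{h_{C+1}})$ with $L^2$ via a functional-calculus isomorphism. First I would introduce $K := H + C + 1 \geq 1$, which is self-adjoint with bounded inverse on $L^2$, satisfies $\calD(K^{1/2}) = \calD(h)$, and realizes the energy form as $h_{C+1}(u,v) = (K^{1/2}u, K^{1/2}v)$. Setting $\mu := \lambda + C + 1$ and introducing the bounded self-adjoint operator $S := I - \mu K^{-1}$ on $L^2$, a short computation using $(u,v) = (K^{-1}K^{1/2}u, K^{1/2}v)$ yields the key identity
\[ (h - \lambda)(u, v) = (S K^{1/2} u, K^{1/2} v) \quad \text{for all } u, v \in \calD(h). \]
Since $K^{1/2}$ is a bijective isometry from $(\calD(h), \sqrt{h_{C+1}})$ onto $L^2$, taking the supremum over $v$ immediately gives $\sup\{ |(h-\lambda)(u,v)| : v \in \calD(h),\, h_{C+1}(v) \leq 1 \} = \|S K^{1/2} u\|$.

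For the direction $(i) \Rightarrow (ii)$ I would invoke the classical Weyl criterion for the self-adjoint operator $H$: if $\lambda \in \sigma(H)$, there exist $u_n \in \calD(H) \subset \calD(h)$ with $\|u_n\| = 1$ and $\|(H-\lambda) u_n\| \to 0$. For such vectors $(h-\lambda)(u_n, v) = ((H-\lambda) u_n, v)$, and combining Cauchy--Schwarz with the bound $\|v\|^2 \leq h_{C+1}(v)$ (which follows from the lower bound $h(v) \geq -C\|v\|^2$) yields
\[ \sup_{v \in \calD(h),\, h_{C+1}(v) \leq 1} |(h-\lambda)(u_n, v)| \leq \|(H-\lambda) u_n\| \to 0. \]

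For the converse $(ii) \Rightarrow (i)$ I would argue by contraposition. If $\lambda \notin \sigma(H)$ then $\mu \notin \sigma(K)$, and since $S = g(K)$ for the bounded continuous function $g(t) = 1 - \mu/t$ on $[1,\infty)$, elementary functional calculus gives $0 \notin \sigma(S)$, so $S$ is boundedly invertible on $L^2$. The identity from the setup then produces, for any $u \in \calD(h)$,
\[ \|u\| = \|K^{-1/2} S^{-1}\, (S K^{1/2} u)\| \leq \|K^{-1/2}\|\, \|S^{-1}\|\, \|S K^{1/2} u\|, \]
so any sequence with $\|u_n\| = 1$ would force $\|S K^{1/2} u_n\|$ to be bounded away from zero, contradicting (ii). The only step that requires a bit of care is the Riesz-type identity relating the supremum in (ii) to $\|S K^{1/2} u\|$; once this reformulation is in place, both implications collapse into one-line norm estimates, so there is no real obstacle to overcome.
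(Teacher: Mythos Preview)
The paper does not give its own proof of this lemma; it is quoted verbatim from \cite{Stollmann-01} and used as a black box. There is therefore nothing to compare against on the level of argument.

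Your proof is correct. The identity $(h-\lambda)(u,v) = (SK^{1/2}u, K^{1/2}v)$ with $S = I - \mu K^{-1}$ is easily checked, and the identification of the supremum with $\|SK^{1/2}u\|$ via the unitary $K^{1/2}\colon (\calD(h),\sqrt{h_{C+1}})\to L^2$ is the natural Riesz-representation step. For $(i)\Rightarrow(ii)$ your appeal to the classical Weyl sequence in $\calD(H)$ and the bound $\|v\|^2\le h_{C+1}(v)$ is fine. For $(ii)\Rightarrow(i)$ the only point worth a word is that $\mu\notin\sigma(K)$ indeed gives $0\notin\sigma(S)$: since $g(t)=1-\mu/t\to 1$ as $t\to\infty$ and $|g(t)|=|t-\mu|/t$ is bounded below on any bounded part of $\sigma(K)\subset[1,\infty)$ not containing $\mu$, one has $\inf_{\sigma(K)}|g|>0$, so $S=g(K)$ is boundedly invertible. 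Everything else is a one-line norm estimate, exactly as you say.
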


Combined with the Shnol' type results of the previous section (Theorem \ref{thm:shnol_inf} and Corollary \ref{shnolcor}), this lemma gives the following two results.

\begin{corollary}\label{cor:corozwei}
 If $u$ is a generalized eigenfunction for $h$ corresponding to $\lambda$, and if there exists an increasing sequence $E_n \subset X$ such that $B_{2a+s}(E_n)$ is relatively compact and
 \[ \frac{\int_X \tilde{u}^2 \: \rmd \mu^{(b)}(\eta_1) + \int_X \tilde{u}^2 \: \rmd \mu^{(b)}(\eta_2) + \| u \One_{A_{2a+s}(E_n)} \|^2}{\| \One_{E_n} u \|^2} \rightarrow 0 \]
 for some $a, s > 0$ and $\eta_1$, $\eta_2$ chosen as in Theorem \ref{thm:shnol_inf}, then $\lambda \in \sigma(H)$.
\end{corollary}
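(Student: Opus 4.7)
The plan is to apply the Weyl-type criterion of the preceding lemma with the normalized test sequence
\[
w_n := \frac{u\eta_{1,n}^2}{\|u\eta_{1,n}^2\|}, \qquad \eta_{1,n}:=\eta_{E_n,a},
\]
which is exactly the choice suggested by Theorem \ref{thm:shnol_inf}. As a preliminary step I would verify that $u\eta_{1,n}^2 \in \calD(h)$: since $B_a(E_n)\subset B_{2a+s}(E_n)$ is relatively compact, Proposition \ref{cutoffprop} gives $\eta_{1,n} \in \calD \cap C_c(X)$ with $\mu^{(d)}(\eta_{1,n})\leq a^{-2} m$. Because $u\in\calD_{loc}^*(h)$ locally agrees with a function in $\calD$ on the compact set $\supp \eta_{1,n}$, Proposition \ref{thm:products} then yields $u\eta_{1,n}^2 \in \calD$, and membership in $\calD(\nu^+)$ follows from the boundedness and compact support of $\eta_{1,n}^2$ together with $\nu^+\in\calM_0$.

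Next I would apply Theorem \ref{thm:shnol_inf} to obtain, for every $v\in\calD(h)$,
\[
|(h-\lambda)(u\eta_{1,n}^2, v)|^2 \leq C\,\calE_1(v)\,R_n,
\]
where $R_n$ denotes precisely the numerator appearing in the hypothesis of the corollary. To match the supremum in the Weyl criterion, which runs over $v$ with $h_{C_0+1}(v,v)\leq 1$ for a lower bound $C_0$ of $h$, I would use the relative bound $\nu^-(v)\leq q\calE(v)+C_q\|v\|^2$ with $q<1$ to estimate
\[
h_{C_0+1}(v,v)\geq (1-q)\calE(v) + (C_0+1-C_q)\|v\|^2.
\]
Choosing $C_0$ large enough, this gives $\calE_1(v)\leq (1-q)^{-1} h_{C_0+1}(v,v) \leq (1-q)^{-1}$ uniformly in $v$.

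For the denominator, the identity $\eta_{1,n}\equiv 1$ on $E_n$ yields $\eta_{1,n}^2\geq \One_{E_n}$ pointwise, hence $\|u\eta_{1,n}^2\| \geq \|u\One_{E_n}\|$. Dividing the Shnol' estimate by $\|u\eta_{1,n}^2\|^2$ and taking the supremum over admissible $v$, the expression inside the supremum of the Weyl criterion is dominated by
\[
C'\,\frac{R_n}{\|u\One_{E_n}\|^2},
\]
which tends to zero by the hypothesis of the corollary. The Weyl lemma then immediately gives $\lambda\in\sigma(H)$.

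The main obstacle I anticipate is the bookkeeping in the first step: one must confirm that $u\eta_{1,n}^2$ is a genuine element of $\calD(h)$, not merely of $\calD_{loc}^*(h)$, so that both sides of the Shnol' bound and the pairing in the Weyl criterion are well-defined. This rests on the compactness of $\supp \eta_{1,n}$ combined with the product results of Proposition \ref{thm:products} and the finiteness of $\nu^+$ on compact sets implicit in $\nu^+\in\calM_0$. Apart from this technicality, the argument is a clean assembly of the Shnol' estimate, the Weyl criterion, and the trivial lower bound $\eta_{1,n}^2\geq \One_{E_n}$.
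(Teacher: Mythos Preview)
Your approach is exactly the one the paper has in mind: it states the corollary as an immediate consequence of combining the Weyl-type lemma with Theorem~\ref{thm:shnol_inf}, without spelling out details, and your test sequence $w_n=u\eta_{1,n}^2/\|u\eta_{1,n}^2\|$ together with the pointwise bound $\eta_{1,n}^2\geq\One_{E_n}$ is precisely what makes this work.

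Two small imprecisions are worth fixing. First, $\nu^+\in\calM_0$ alone does \emph{not} guarantee finiteness on compact sets; the correct reason $u\eta_{1,n}^2\in L^2(\nu^+)$ is that $u\in\calD_{loc}^*(h)$, so on the compact $\supp\eta_{1,n}$ the function $u$ agrees with some $v\in\calD(h)\subset L^2(\nu^+)$, whence $|u\eta_{1,n}^2|\leq|\tilde v|\in L^2(\nu^+)$. Second, you cannot ``choose $C_0$ large enough'': $C_0$ must be a lower bound of $h$. The estimate you need follows anyway: from $h_{C_0+1}(v)\leq 1$ one gets $\|v\|^2\leq 1$ and $h(v)\leq 1$, and then $(1-q)\calE(v)\leq h(v)+\nu^-(v)-\nu^+(v)+ \nu^+(v) \leq h(v)+C_q\|v\|^2\leq 1+C_q$, so $\calE_1(v)$ is uniformly bounded.
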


\begin{corollary}\label{cor:coroeins}
 Let the jump size of $\calE$ be $s < \infty$. If $u$ is a generalized eigenfunction of $h$ corresponding to $\lambda$, and if there exists an increasing sequence $E_n \subset X$ such that $B_{2a+s}(E_n)$ is relatively compact and $\frac{\| \One_{A_{2s+2a}(E_n)} u \|}{\| \One_{E_n} u \|} \rightarrow 0$ for some $a > 0$, then $\lambda \in \sigma(H)$.
\end{corollary}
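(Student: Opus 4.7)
The plan is to apply the Weyl-type lemma stated just above (from \cite{Stollmann-01}) with a sequence of Weyl-type singular sequences built by truncating the generalized eigenfunction $u$ with cut-off functions attached to the sets $E_n$. More precisely, I would set $\eta_n := \eta_{E_n,a}$ and take as test functions
\[ u_n := \frac{u \eta_n^2}{\|u \eta_n^2\|}. \]
The relative compactness of $B_{2a+s}(E_n)$ ensures that $\eta_n$ has compact support inside a relatively compact intrinsic ball, so $u\eta_n^2$ is compactly supported and lies in $\calD_{loc}^{*}(h)$ (by Proposition \ref{thm:products} applied to $u\in\calD_{loc}^{*}$ and $\eta_n\in\calD\cap C_c(X)$ together with the fact that $\mu^{(d)}(\eta_n)\leq (1/a^2)m$ has a bounded density). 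Hence $u_n\in\calD(h)$ and the normalization is well-defined provided $\|\One_{E_n} u\|>0$ for $n$ large, which is implicit in the assumption $\|\One_{A_{2s+2a}(E_n)}u\|/\|\One_{E_n}u\|\to 0$.

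Next, for any $v\in\calD(h)$ with $h_{C+1}(v,v)\leq 1$ (where $-C$ is the lower bound for $h$), I would first bound $\calE_1(v)$. Since $h\geq -C$ gives $\|v\|^2\leq 1$, and since $\nu^-(v)\leq q\calE(v)+C_q\|v\|^2$ yields $(1-q)\calE(v)\leq h(v)+C_q\|v\|^2\leq 1+C_q$, one obtains $\calE_1(v)\leq K$ for some constant $K=K(C,q,C_q)$. Now apply Corollary \ref{shnolcor} to the unnormalized function $u\eta_n^2$:
\[ |(h-\lambda)(u\eta_n^2,v)| \leq C(a,q)\,\calE_1(v)^{1/2}\,\|\One_{A_{2s+2a}(E_n)}u\| \leq C(a,q)\,K^{1/2}\,\|\One_{A_{2s+2a}(E_n)}u\|. \]
Dividing by $\|u\eta_n^2\|$ and using that $\eta_n\equiv 1$ on $E_n$ (so $\eta_n^2\geq\One_{E_n}$ and hence $\|u\eta_n^2\|\geq\|\One_{E_n}u\|$), one gets
\[ \sup_{\substack{v\in\calD(h)\\ h_{C+1}(v,v)\leq 1}} |(h-\lambda)(u_n,v)| \leq C(a,q)\,K^{1/2}\,\frac{\|\One_{A_{2s+2a}(E_n)}u\|}{\|\One_{E_n}u\|}, \]
which tends to $0$ by assumption. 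The Weyl-type lemma then delivers $\lambda\in\sigma(H)$.

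The only nontrivial point is verifying that $u\eta_n^2$ really lies in $\calD(h)$, in particular in the form domain of the positive measure perturbation $\nu^+$; this uses compactness of $\supp(u\eta_n^2)\subset B_a(E_n)$ and the product/cut-off machinery of Section \ref{Energy} (in particular the extension lemma following the definition of $\calD_{loc}^*$, which asserts that compactly supported elements of $\calD_{loc}^*$ belong to $\calD$, together with the observation that $u$ is quasi-continuous and $\eta_n^2$ is bounded so that the $\nu^+$-integrability carries over from a cut-off of $u$ already in $\calD(h)$). Everything else is essentially a bookkeeping reduction of Corollary \ref{shnolcor} into the normalized form required by the Weyl criterion, so I expect the proof to be only a few lines once these membership verifications are recorded.
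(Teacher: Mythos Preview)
Your proposal is correct and is exactly the approach the paper has in mind: the corollary is stated there as an immediate consequence of the Weyl-type lemma combined with Corollary \ref{shnolcor}, without further details. Your write-up in fact supplies more of the bookkeeping (the bound on $\calE_1(v)$ from $h_{C+1}(v)\leq 1$, the lower bound $\|u\eta_n^2\|\geq\|\One_{E_n}u\|$, and the membership $u\eta_n^2\in\calD(h)$ via Proposition \ref{thm:products}) than the paper itself.
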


The case of infinite jump size is somehow more difficult to handle than the case of finite jump size, because we do not know anything about $\mu^{(b)}$ for a general Dirichlet form. In the next section we will give examples for a 'good' and a 'bad' $\mu^{(b)}$. For the case of finite jump size we will  precise the result here.

We will assume the following condititions (C):
\begin{itemize}
 \item[(C)] There is an intrinsic metric with finite jump size $s$. Let $a > 0$, $E_1 \subset X$, $k := 2s+2a$, $E_{n+1} := B_k(E_n)$, $F_1 := E_1$, $F_{n+1} := E_{n+1} \setminus E_n$ and $A_k(E_n) \subset F_{n+1} \cup F_n$ such that $E_n$ is compact and
\[ m(F_n) e^{- \gamma n} \rightarrow 0 \]
for all $\gamma > 0$. We set
\[ w(x) := w_n := \begin{cases} (n \sqrt{m(F_n)})^{-1} &: m(F_n) \neq 0 \\ 0 &: m(F_n) = 0 \end{cases} \]
for $x \in F_n$ and $w(x) := 0$ for $x \notin E_\infty := \bigcup_n E_n$. Clearly $w \in L^2(X, m)$.
\end{itemize}

\begin{theorem}\label{thm:geneigspec}
 Assume (C). If $u$ is a generalized eigenfunction corresponding to $\lambda$ and $wu \in L^2(X, m)$, then $\lambda \in \sigma(H)$.
\end{theorem}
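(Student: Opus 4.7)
The plan is to argue by contradiction: assume $\lambda \notin \sigma(H)$ and show that this forces $\|\One_{E_n} u\|^2$ to grow at least geometrically in $n$, contradicting $wu\in L^2(X,m)$ in conjunction with the subexponential growth of $m(F_n)$.

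First I would introduce the shorthand $a_n := \|\One_{F_n} u\|^2$ and $S_n := \|\One_{E_n} u\|^2 = \sum_{j=1}^n a_j$. By the inclusion $A_k(E_n)\subset F_n\cup F_{n+1}$ from (C), one has $\|\One_{A_k(E_n)} u\|^2 \leq a_n+a_{n+1}$. Each set $B_{2a+s}(E_n)$ is relatively compact since it sits inside $B_k(E_n)=E_{n+1}$, which is compact. Hence Corollary \ref{cor:coroeins} is applicable to every subsequence of $(E_n)$. Applying its contrapositive to all subsequences and ruling out the trivial case $u\equiv 0$ on $E_\infty$ (in which case the statement holds vacuously since the hypothesis $wu\in L^2$ can be replaced by the stronger conclusion one needs), we obtain
\[
\liminf_{n\to\infty}\frac{a_n+a_{n+1}}{S_n} \;\geq\; \liminf_{n\to\infty}\frac{\|\One_{A_k(E_n)}u\|^2}{\|\One_{E_n}u\|^2} \;=:\;\delta\;>\;0.
\]
Thus there is $N$ with $a_n+a_{n+1}\geq \delta S_n$ for all $n\geq N$.

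Next I would deduce exponential growth of $S_n$. Writing $a_n = S_n-S_{n-1}$, the previous inequality gives $S_{n+1}=S_n+a_{n+1}\geq (1+\delta)S_n - a_n$, which rearranges to
\[
S_{n+1}\;\geq\;\delta S_n + S_{n-1}\;\geq\;(1+\delta)\,S_{n-1}.
\]
Iterating this over every second index yields $S_n\geq C\rho^n$ for large $n$, where $\rho:=\sqrt{1+\delta}>1$ and $C>0$.

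The final paragraph would derive a contradiction from $wu\in L^2(X,m)$. Unpacking the definition of $w$, this condition reads
\[
\sum_{n\,:\,m(F_n)>0}\frac{a_n}{n^2\,m(F_n)} \;=\; \|wu\|^2\;<\;\infty,
\]
so in particular $a_n/(n^2 m(F_n))\to 0$. Pick any $\gamma\in(0,\log\rho)$; the hypothesis $m(F_n)e^{-\gamma n}\to 0$ from (C) gives $m(F_n)\leq e^{\gamma n}$ for all large $n$, and thus $a_n = o(n^2 e^{\gamma n})$. A routine summation then shows $S_n = o(n^2 e^{\gamma n}) = o(\rho^n)$, contradicting the lower bound $S_n\geq C\rho^n$.

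The main obstacle is the combinatorial step that turns the lower bound $a_n+a_{n+1}\geq\delta S_n$ into genuine exponential growth of $S_n$; the trick of using the two-step recursion $S_{n+1}\geq (1+\delta)S_{n-1}$ (rather than a one-step recursion, which the inequality does not directly give) is what makes the quantitative comparison with the subexponential bound on $m(F_n)$ work. A minor subsidiary point is to justify that ruling out the trivial case $u\equiv 0$ on $E_\infty$ does not lose generality, and to confirm that Corollary \ref{cor:coroeins}, formulated for existence of some increasing sequence, can indeed be applied to subsequences of the given $(E_n)$.
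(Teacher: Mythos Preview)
Your argument is correct and follows the same route as the paper: argue by contradiction, use the contrapositive of Corollary~\ref{cor:coroeins} to get $a_n+a_{n+1}\geq \delta S_n$ for large $n$, deduce geometric growth of $S_n$, and contradict this with the bound $a_n = o(n^2 m(F_n))$ coming from $wu\in L^2$ together with $m(F_n)e^{-\gamma n}\to 0$. In fact your write-up is considerably more explicit than the paper's, which simply states that $u(N+1)+u(N)>\varepsilon^2\sum_{n=1}^N u(n)$ together with $\sum w_n^2 u(n)<\infty$ contradicts the subexponential growth of $m(F_n)$, leaving the reader to supply the recursion and the comparison you spell out.

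One remark: your parenthetical about the case $u\equiv 0$ on $E_\infty$ is not right; nothing is vacuous there, since $wu\in L^2$ is then trivially satisfied and one would still owe the conclusion $\lambda\in\sigma(H)$. The paper does not address this degenerate case either, so you are not missing anything relative to the original; it is tacitly assumed (or follows from unstated hypotheses on $E_\infty$) that $u$ does not vanish identically on $E_\infty$.
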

\begin{proof}
 Assume $\lambda \notin \sigma(H)$. Then by Corollary \ref{cor:coroeins} there is an $N$ and an $\varepsilon > 0$ such that
 \[ \frac{\| \One_{A_{2s+2a}(E_{n})} u \|}{\| \One_{E_{n}} u \|} > \varepsilon \]
 for alle $n > N$. Define $u(n) := \| \One_{F_n} u \|^2$. Then we know that $u(N+1) + u(N) > \varepsilon^2 \sum_{n=1}^N u(n)$ and $\sum_{n=1}^\infty w_n^2 u(n) = c < \infty$. But this contradicts $m(F_n) e^{- \gamma n} \rightarrow 0 \;\forall \gamma > 0$.
\end{proof}

For reverse results -- finding generalized eigenfunctions for values in the spectrum -- we cite \cite{BoutetdeMonvelS-03b}:
\begin{theorem}
 Let $H$ be a self-adjoint, semi-bounded operator with ultra-contrac\-tive semigroup on an $L^2$-Hilbert space and let $w \in L^2$, $w > 0$. Then for spectrally almost all $\lambda \in \sigma(H)$ there exists a function $u$ with $wu \in L^2$ and
 \[ (Hf, u) = \lambda (f, u) \]
 for all $f \in \calD(H)$ with $w^{-1} f \in L^2$ and $w^{-1} Hf \in L^2$.
\end{theorem}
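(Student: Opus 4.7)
The plan is to implement the approach of \cite{BoutetdeMonvelS-03b} based on the nuclear (Gelfand-Berezanskii-Maurin) spectral theorem. The combination of $w \in L^2$ and ultracontractivity is precisely what produces the Hilbert-Schmidt/trace-class operator needed to feed the nuclear expansion and guarantee the pointwise existence of generalized eigenfunctions on the spectrum.

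First I would establish that for each $t > 0$ the operator $T := W e^{-tH} W$ — where $W$ denotes multiplication by $w$ — is trace class on $L^2$. Ultracontractivity $e^{-tH/2} : L^2 \to L^\infty$ together with self-adjointness yields (after composition) an essentially bounded symmetric integral kernel $p_t(x,y)$ for $e^{-tH}$; combined with $w \in L^2$, the kernel $w(x) p_t(x,y) w(y)$ of $T$ lies in $L^2(X \times X, m \otimes m)$, and $\int w(x)^2 p_t(x,x) \,\rmd m$ is finite by estimating $p_t(x,x) \leq \|p_{t/2}(x,\cdot)\|_{L^2}^2 \leq C$, giving $T$ trace class.

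Next I would introduce the rigged Hilbert space $\calH_+ \hookrightarrow L^2 \hookrightarrow \calH_-$ adapted to $w$: $\calH_+$ is the space of $f \in L^2$ with $w^{-1} f \in L^2$ under the norm $\|f\|_+^2 := \|f\|_{L^2}^2 + \|w^{-1} f\|_{L^2}^2$, and $\calH_-$ is its anti-dual, naturally identified with measurable $u$ satisfying $wu \in L^2$ via the pairing $\langle f, u\rangle := \int f u \,\rmd m$ (well defined because $fu = (w^{-1}f)(wu) \in L^1$). Then I would invoke the direct integral decomposition $L^2 \cong \int^\oplus_{\sigma(H)} \calK_\lambda \,\rmd\rho(\lambda)$, with $H$ realized as fiberwise multiplication by $\lambda$ and $e^{-tH}$ as multiplication by $e^{-t\lambda}$. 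The trace-class property of $T = (We^{-tH/2})(We^{-tH/2})^*$ makes $We^{-tH/2}$ Hilbert-Schmidt, which, by the standard nuclear/Berezanskii construction, produces for $\rho$-a.e.\ $\lambda$ a bounded evaluation map $\calH_+ \to \calK_\lambda$, $f \mapsto \hat f(\lambda)$, compatible with the direct integral decomposition. Transposing this evaluation and applying Riesz representation yields, for $\rho$-a.e.\ $\lambda$ and each unit $\xi \in \calK_\lambda$, a function $u_\lambda \in \calH_-$ — that is, $w u_\lambda \in L^2$ — such that $(f, u_\lambda) = \langle \hat f(\lambda), \xi\rangle_{\calK_\lambda}$ for every $f \in \calH_+$.

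Finally, for $f \in \calD(H)$ satisfying $w^{-1} f, w^{-1} Hf \in L^2$ (i.e., both $f$ and $Hf$ lie in $\calH_+$), the fiber realization of $H$ gives $\widehat{Hf}(\lambda) = \lambda \hat f(\lambda)$ for $\rho$-a.e.\ $\lambda$, and pairing with $u_\lambda$ produces $(Hf, u_\lambda) = \lambda (f, u_\lambda)$. The exceptional null set can be made independent of $f$ by running the construction along a countable dense family in the separable space $\calH_+ \cap \calD(H)$ and using linearity; since $\rho$ shares null sets with the scalar spectral measure of $H$, the statement holds for spectrally almost every $\lambda \in \sigma(H)$. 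The main obstacle is the measurable-selection step producing honest functions $u_\lambda$ (not mere abstract functionals) with $w u_\lambda \in L^2$; this is precisely where the Hilbert-Schmidt/trace-class property of $We^{-tH}W$, hence ultracontractivity, enters essentially.
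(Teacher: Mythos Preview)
The paper does not give its own proof of this theorem; it is stated as a citation from \cite{BoutetdeMonvelS-03b}. Your proposal correctly reconstructs the argument of that reference: ultracontractivity together with $w\in L^2$ makes $We^{-tH/2}$ Hilbert--Schmidt (hence $We^{-tH}W$ trace class), which is precisely the input for the nuclear (Gelfand--Berezanskii--Maurin) expansion in the rigging $\calH_+\hookrightarrow L^2\hookrightarrow\calH_-$ determined by $w$, yielding the generalized eigenfunctions $u_\lambda$ with $wu_\lambda\in L^2$.
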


\textbf{Problem.} It is not clear whether, in general, a function $u$ as in the previous theorem is a generalized eigenfunction in our sense.

\section{Applications and examples}

In this section we discuss various situations in which our results can be applied.

\medskip

\subsection{Strongly local forms}

This situation includes the Laplace operator on subsets of Euclidean space as well as Laplace Beltrami operators on Riemannian manifolds or quantum graphs. As it has been investigated, for instance, in \cite{BoutetdeMonvelS-03b, BoutetdeMonvelLS-08, LenzSV} we refrain from further discussion.

\smallskip

\subsection{Dirichlet forms on graphs}\label{sec:graph}

We consider an undirected graph $(X,E)$ with a countable set of vertices $X$ and a set of edges $E$. We assume that there are no multiple edges, i.e., any edge is uniquely characterized by two vertices. Then, vertices $x, y \in V$ are neighbors, written as $x \sim y$, whenever there is an edge connecting them. The degree of a vertex is $deg (x) := |\{y \in V : x \sim y\}|$. We do not assume that this quantity is finite. A path of length $n$ between $x, y \in V$ consists of vertices $x_0, x_1, \ldots, x_n$ with $x_0 = x$, $x_n = y$ such that $x_i$ and $x_{i+1}$ are neighbors for all $i=0, \ldots, n-1$. We assume that for any two vertices there is a path of finite length between them. The graph distance $d_g$ between two vertices is then defined to be the length of the smallest path between these vertices. The graph distance induces the discrete topology.

Let now $m$ be a measure on $X$ of full support, i.e., $m$ is a map from $V$ to $(0,\infty)$. Let $j: E\longrightarrow (0,\infty)$ given with
\[ m'(x) := \sum_{x\sim y} j(x, y) < \infty \]
for every $x\in V$. Extend $j$ by zero to $X\times X$ and define
\[ \calE_{comp}(u, v) := \sum_{X\times X} j(x, y) (u(x) - u(y)) (v(x) - v(y)) \]
for $u, v \in C_c(X)$ and define $\calE$ to be its closure. Then $\calE$ is a regular Dirichlet form. We note that this construction includes the discrete Laplacian on $\ZZ^d$ that we discussed in Example \ref{ex:disclapl}. For a further detailed study of various (spectral) aspects of this Dirichlet form we refer to \cite{HK,KL1,KL2}.

Clearly, we see that
\[ \rmd \mu^{(b)}(u)(x) = \sum_{y \in X} j(x, y) (u(x) - u(y))^2. \]
We set
\begin{align*}
 M_1 &:= \{u : X \rightarrow \RR \mid (u(x) - u(y))^2 \leq \min \left( 1, \frac{m(x)}{m'(x)} \right) \;\forall x \sim y \} \\
 M_2 &:= \{u : X \rightarrow \RR \mid (u(x) - u(y))^2 \leq \min \left( 1, \frac{m(x)}{j(x, y) \cdot deg(x)} \right) \;\forall x \sim y \}
\end{align*}
Note that for $x\sim y$ and $u\in M_1$ or $u\in M_2$ the estimate $|  u(x) - u(y)| \leq 1$ holds.
 It is not hard to see that $\mu^d (u) (x) \leq m(x)$ for any $x\in X$ and $u\in M_1$ or $u\in M_2$. Thus,
 $M_1$ and $M_2$ are sets of intrinsic metric functions and $\varrho_1 := \varrho_{M_1}$ and $\varrho_2 := \varrho_{M_2}$ are intrinsic metrics. We have
\begin{align*}
 \min \left( 1, \inf_{x \in V} \sqrt{\frac{m(x)}{m'(x)}} \right) d_g \leq \varrho_1 \leq d_g
 \intertext{and}
 \min \left( 1, \inf_{x \in V} \sqrt{\frac{m(x)}{j(x, y) \cdot d(x)}} \right) d_g \leq \varrho_2 \leq d_g.
\end{align*}
The jump size of $\varrho_{1,2}$ is at most $1$.

\smallskip

We note that the topology generated by these intrinsic metrics is not necessarily the discrete topology:
\begin{example}\label{ex:topo}
 A graph with $\varrho_1$ and $\varrho_2$ not generating the discrete topology: \\
 Let
 \[ V := \{a_1, a_2\} \cup \bigcup_{n \in \NN} \{b_n, c_n\} \qquad E := \bigcup_{\substack{n \in \NN \\ i \in \{1, 2\}}} \{\{a_i, b_n\}, \{b_n, c_n\}\} \]
 \[ m \equiv 1 \]
 \[ j(a_i, b_n) = \frac{1}{n^2} \qquad j(b_n, c_n) = n. \]
Thus, $a_1$ and $a_2$ are connected to each $b_n$ and $c_n$ is connected to $b_n$ only and
 $m' (b_n)= n + 2/n^2$, $j(a_i, b_n)= 1/n^2$. In particular, a function in $M_1$ can between $a_i$ and $b_n$ change its value by not more than $1/n$. As this holds for all $n$ we infer that  $\varrho_1(a_1, a_2) = 0$. As $deg (a_i)=\infty$ we also have $\varrho_2(a_1, b_1) = 0$.
\end{example}
\smallskip

Even if the topology generated by $\varrho_j$ is not the original topology, our main theorems about the spectrum applies now (see also \cite{Kirsch-07} 7.1 for a special case)

\begin{theorem}
 Let $H$ be the operator associated to $\calE + \nu$ defined as in Section~\ref{sec:caccio}. Assume that the corresponding semigroup is ultra-contractive. Let $w$ be defined as in Theorem \ref{thm:geneigspec} and the assumptions of that theorem be fullfilled. Then:
 \begin{enumerate}
 \item[(i)] For every generalized eigenvalue $\lambda$ with generalized eigenfunction $u$ with $wu \in L^2$ we have $\lambda \in \sigma (H)$.
 \item[(ii)] For spectrally almost every $\lambda \in \sigma (H)$ there is a generalized eigenfunction $u$ to $\lambda$ with $wu \in L^2$.
 \end{enumerate}
\end{theorem}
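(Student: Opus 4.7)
For part (i) the work has essentially been done: condition (C) is assumed to hold, and this subsection has produced intrinsic metrics $\varrho_{1}$ and $\varrho_{2}$ with jump size at most $1$. Fixing one of them, part (i) is an immediate application of Theorem \ref{thm:geneigspec}.

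For part (ii), the strategy is to invoke the converse Shnol' theorem recalled from \cite{BoutetdeMonvelS-03b} (whose hypothesis of ultra-contractivity is in force by assumption), which produces, for spectrally almost every $\lambda\in\sigma(H)$, a function $u$ with $wu\in L^{2}$ and $(Hf,u)=\lambda(f,u)$ for every $f\in\calD(H)$ satisfying $w^{-1}f, w^{-1}Hf\in L^{2}$. The remaining content of (ii) is to identify this $u$ as a generalized eigenfunction in the sense of Section \ref{Measure}, i.e.\ to show $u\in\calD_{loc}^{\ast}(h)$ together with $h(u,\varphi)=\lambda(u,\varphi)$ for every finitely supported $\varphi\in\calD(h)$. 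This is precisely the gap flagged in the \textbf{Problem} at the close of the previous section, and it is where the explicit graph structure becomes essential.

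I would handle the identification in two steps. First, to obtain $u\in\calD_{loc}^{\ast}(h)$, one needs $\sum_{y}j(x,y)(u(x)-u(y))^{2}<\infty$ at each vertex $x$; this is extracted from the weak equation by testing against suitably localized $f$'s and using the finiteness $m'(x)<\infty$ of the weighted degree. Second, the operator identity is converted into the form identity by summation by parts: using the symmetry $j(x,y)=j(y,x)$, the double sum defining $\calE(\varphi,u)$ rearranges to $(H\varphi,u)$ whenever it converges absolutely, and hence $h(u,\varphi)=(H\varphi,u)=\lambda(\varphi,u)$ for every finitely supported $\varphi\in\calD(H)$ with $w^{-1}\varphi, w^{-1}H\varphi\in L^{2}$. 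Absolute convergence comes from the Cauchy--Schwarz estimate pairing $w^{-1}H\varphi\in L^{2}$ against $wu\in L^{2}$.

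The main obstacle is to ensure the admissible class of $\varphi$'s is rich enough to cover all finitely supported $\varphi\in\calD(h)$. In the locally finite case this is straightforward: $H\varphi$ is finitely supported whenever $\varphi$ is, so both side conditions become trivial and every $\varphi\in\calD(h)\cap C_{c}(X)$ is of the required form. In general, I would proceed by a truncation/exhaustion along the compact sets $E_{n}$ of condition (C), showing that an arbitrary $\varphi\in\calD(h)\cap C_{c}(X)$ is an $\calE_{1}$-limit of functions in the admissible class, and passing to the limit in $(H\varphi_{n},u)$ via dominated convergence controlled by $wu\in L^{2}$. Nonvanishing of $u$ is clear from $wu\neq 0$, which completes the identification and hence~(ii).
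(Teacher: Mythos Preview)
Part (i) is fine and matches the paper.

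For part (ii) you have correctly located the issue: one must upgrade the $u$ produced by the abstract converse theorem to a generalized eigenfunction in the sense of Section~\ref{Measure}, and in particular establish $u\in\calD_{loc}^{\ast}$. The gap is in your Step~1. The weak equation $(Hf,u)=\lambda(f,u)$ is \emph{linear} in $u$; testing against a localized $f$ (say $\delta_{x}$) yields at best a relation of the type $\sum_{y}j(x,y)u(y)=\text{finite}$, not the quadratic bound $\sum_{y}j(x,y)u(y)^{2}<\infty$ that membership in $\calD_{loc}^{\ast}$ requires. The finiteness $m'(x)<\infty$ does not bridge this: Cauchy--Schwarz goes the wrong way, and a Caccioppoli-type argument (testing against $u$ times a cutoff) already presupposes $u\in\calD_{loc}^{\ast}$.

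The paper's proof sidesteps this entirely by not using the weak equation for this step. It argues directly from the hypothesis $wu\in L^{2}$:
\[
\sum_{y}j(x,y)\,u(y)^{2}\ \le\ \Bigl(\sup_{y}\frac{j(x,y)}{w(y)^{2}}\Bigr)\,\|wu\|_{L^{2}}^{2},
\]
and notes that the supremum is finite for each fixed $x$ (the finite jump size forces all neighbours of $x$ to lie in finitely many of the shells $F_{m}$, on each of which $j(x,\cdot)/w(\cdot)^{2}$ is bounded). This single estimate yields $u\in\calD_{loc}^{\ast}$ immediately; the paper then dismisses the remaining verifications --- your Step~2 and the discussion of admissible test functions --- as routine in the discrete setting. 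Your treatment of those points is more explicit than the paper's, but the key missing ingredient in your proposal is this direct use of $wu\in L^{2}$.
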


\begin{proof}
 Because of the easy situation, we only have to show, that
 \[ \sum_{y \in X} j(x, y) u(y)^2 < \infty \qquad \forall x \in X, \]
 i.e., that $u \in \calD_{loc}^*$. This follows easily in view of
 \[ \sum_{y \in X} w(y)^2 u(y)^2 < \infty \]
 and the boundedness of $j(x, y) / w(y)^2$.
\end{proof}

\subsection{Exponentially decaying jumping kernel}

Let $X = \RR^d$ and
\[ \calE'(u, v) := \sint{X \times X - d} (u(x) - u(y)) (v(x) - v(y)) j(x, y) \: \rmd x \rmd y \]
\[ \calD(\calE') := \{ u \in L^2(X) \mid \calE(u) < \infty \} \]
with some measurable, symmetric jump kernel $j \geq 0$. Then $\calE$ is a Dirichlet form.

Now assume that $j$ satisfies
\[ j(x, y) \leq \begin{cases}
         C |x - y|^{- d - \alpha} & : |x - y| \leq 1 \\
         C e^{- \beta |x - y|} & : |x - y| > 1
        \end{cases} \]
with $C, \alpha, \beta > 0$ and $\alpha < 2$.
Then $C_c^\infty(X) \subset \calD (\calE')$, and the closure of the restriction of $\calE'$ to $C_c^\infty (X)$ is a regular Dirichlet form. We denote this form by $\calE$.

A particular case of such a Dirichlet form is the regular Dirichlet form associated with the pseudo-relativistic Hamiltonian
\[ \sqrt{- \Delta + m^2} - m. \]

Indeed, in this case
\[ j(x, y) = C |x - y|^{- d - 1} \Psi(m^{1/2} |x - y|) \]

where $\Psi = \Psi(r)$ is an explicit Bessel function (given by $\Psi (r) = \int_0^\infty s^{(n-1)/2} e^{-s/4-r^2/s} \: \rmd s$. The claimed bound $j$ now follows from standard facts about Bessel functions (see e.g.\ \cite[Thm 7.12]{LL}).

Returning to the general case, we see from the upper bound on $j$ that
\[ \int_{\RR^d - \{x\}} |x - y|^2 j(x, y) \: \rmd y \leq c^{-2} \]
for some $c > 0$ small enough independent of $x$. Therefore by Theorem \ref{thm:converse}, $\varrho(x, y) := c \cdot |x - y|$ is an intrinsic metric.

Let $E \subset \RR^d$ and let $\delta(x)$ be the euclidean distance of $x$ to the boundary of $E$. Then for $x \in \RR^d$ with $\delta(x) > \frac{a}{c} + 1$ we calculate
\begin{align*}
 \int_{\RR^d} (\eta_{E, a}(x) &- \eta_{E, a}(y))^2 j(x, y) \: \rmd y \leq \sint{|x-y| \geq \delta(x) - \frac{a}{c}} j(x, y) \: \rmd y \leq c_1 \int_{\delta(x) - \frac{a}{c}}^\infty r^{d-1} e^{- \beta r} \: \rmd r \\
 &\leq c_2 (1 + (\delta(x) - \frac{a}{c})^{d-1}) e^{- \beta (\delta(x) - \frac{a}{c})} \leq c_3 (1 + \delta(x)^{d-1}) e^{- \beta \delta(x)}
\end{align*}
with appropriately chosen constants $c_1, c_2, c_3$. This, together with the simple global bound $\rmd \mu^{(d)}(\eta_{E, a}) \leq \frac{1}{a^2} \rmd x$, shows that
\[ \rmd \mu^{(d)}(\eta_{E, a}) \leq c_4 (1 + \delta(x)^{d-1}) e^{- \beta \delta(x)} \:\rmd x. \]

Let $h = \calE + \nu$ as before and let $u$ be a generalized eigenfunction for $h$ corresponding to $\lambda$. Moreover, consider the surface integral $u(r) := \int_{K(r)} u(x)^2 \rmd o$ with $K(r) := \{ x \mid |x| = r \}$ and assume that
\[ \liminf_{N\to \infty} \int_0^\infty u(r) (1 + |N-r|))^{d-1} e^{-\beta |N-r|} \: \rmd r \left( \int_0^N u(r) dr \right)^{-1} = 0\]
Then Corollary \ref{cor:corozwei}, together with the estimate for $\rmd \mu^{(d)}(\eta_{E, a})$, implies $\lambda \in \sigma (H)$.

\subsection{$\alpha$-stable processes}\label{sec:fraclapl}

Let $X = \RR^d$. The fractional Laplacian $-(-\Delta)^{\alpha/2}$ is the infinitesimal generator of an $\alpha$-stable process for $0 < \alpha < 2$. As discussed in Example \ref{ex:fraclapl}, the corresponding Dirichlet form is (up to a constant) defined by
\[ \calE(u, v) := \sint{X \times X - d} (u(x) - u(y)) (v(x) - v(y)) |x-y|^{-d-\alpha} \: \rmd x \rmd y \]
\[ \calD := \{ u \in L^2(X) \mid \calE(u) < \infty \} \]

Here $c \cdot |x-y|$ is only an intrinsic metric for $c = 0$. But $c \cdot f(|x-y|)$ is an intrinsic metric for $f(x) = x^\beta \wedge x$, $0 < \beta < \alpha/2$ and an appropriate $c=c_\beta > 0$, as can be seen
from Theorem \ref{thm:converse} and the bound
\[ \int_0^\infty f(x)^2 x^{- 1 - \alpha} \:\rmd x < \infty. \]
We fix an intrinsic metric of this form.

If $x \in E \subset X$, then
\begin{align*}
 \int_{\RR^d} (\eta_{E, a}(x) &- \eta_{E, a}(y))^2 |x-y|^{- d - \alpha} \: \rmd y \leq \int_{\delta(x)}^\infty r^{d-1} r^{- d - \alpha} \: \rmd r \\
 &\leq \int_{\delta(x)}^\infty r^{- \alpha - 1} \: \rmd r = \frac{1}{\alpha} \delta(x)^{- \alpha},
\end{align*}
and thus $\rmd \mu^{(d)}(\eta_{E, c}) \leq \alpha^{-1} \delta(x)^{-\alpha} \:\rmd x$. If $x \notin E$, then $\rmd \mu^{(d)}(\eta_{E, c}) \leq \alpha^{-1} (\delta(x) - 1)^{-\alpha} \:\rmd x$ for $\delta(x) > 1$ by the same calculation. Thus, if $u$ is a generalized eigenfunction for $h=\calE+\nu$ corresponding to $\lambda$ with
\[ \frac{\int_0^{N-1} u(r) (N-r)^{-\alpha} \: \rmd r + \int_{N+2}^\infty u(r) (r-N-1)^{-\alpha} \: \rmd r + \int_{N-1}^{N+2} u(r) \: \rmd r}{\int_0^N u(r) dr} \stackrel{N \rightarrow \infty}{\longrightarrow} 0, \]
where $u(r) := \int_{K(r)} u(x)^2 do$, then $\lambda \in \sigma (H)$ by Corollary \ref{cor:corozwei}.

\begin{appendix}

\section{Proof of Proposition \ref{prop:continuity}}

We start with a lemma which is certainly well-known and is, in fact, a slight variant of \cite[Thm. IV.1.16]{Kato-66}.
\begin{lemma}\label{lem:weak_convergence}
 Let $h: \calD(h) \times \calD(h) \rightarrow \CC$ be a symmetric, sesquilinear, positive definite, closed form in some Hilbert space $\calH$. Let $u_n \in \calD(h)$ be a sequence, which is $h_1$ bounded and weakly convergent to $u$. Then $u \in \calD(h)$ and $u_n \rightarrow u$ $h_1$-weakly.
\end{lemma}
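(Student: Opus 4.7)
The plan is to exploit the fact that closedness of $h$ makes $(\calD(h), h_1)$ a Hilbert space in its own right, and then to deduce the claim from standard Hilbert space weak compactness.

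First I would observe that since $h$ is a symmetric, positive definite, closed form, the space $\calD(h)$ equipped with the inner product $h_1(\cdot,\cdot) = h(\cdot,\cdot) + \langle\cdot,\cdot\rangle_\calH$ is a Hilbert space, and the inclusion map $\iota : (\calD(h), \sqrt{h_1}) \hookrightarrow \calH$ is continuous (indeed a contraction). In particular, its adjoint identifies continuous linear functionals on $\calH$ with a subset of continuous linear functionals on $(\calD(h), h_1)$.

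Next, since $(u_n)$ is $h_1$-bounded and Hilbert spaces are reflexive, by the Banach--Alaoglu theorem there exists a subsequence $(u_{n_k})$ that converges $h_1$-weakly to some $v \in \calD(h)$. By the continuity of $\iota$, weak $h_1$-convergence implies weak convergence in $\calH$, so $u_{n_k} \to v$ weakly in $\calH$. Combined with the hypothesis that $u_n \to u$ weakly in $\calH$, uniqueness of weak limits in $\calH$ yields $u = v \in \calD(h)$.

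Finally, to upgrade the subsequential statement to a statement about the full sequence, I would argue by contradiction: if $(u_n)$ did not converge $h_1$-weakly to $u$, there would exist $\varphi \in \calD(h)$, $\varepsilon > 0$, and a subsequence $(u_{n_j})$ with $|h_1(u_{n_j} - u, \varphi)| \geq \varepsilon$ for all $j$. Applying the previous step to this subsequence produces a further subsequence $h_1$-weakly convergent to $u$, contradicting the lower bound. The main (very mild) obstacle is simply making sure the weak limit in $\calH$ coincides with the weak limit in the $h_1$-topology, which is precisely what the continuity of $\iota$ provides.
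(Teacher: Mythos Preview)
Your proof is correct and follows essentially the same structure as the paper's: extract an $h_1$-weakly convergent subsequence, identify its limit with $u$, and then pass from the subsequence to the full sequence by the standard sub-subsequence argument. The only minor variation is that you identify the limit via continuity of the inclusion $\iota:(\calD(h),\sqrt{h_1})\hookrightarrow\calH$, whereas the paper does so by testing against $(H+1)w$ for $w\in\calD(H)$ using the associated self-adjoint operator; your route is slightly more elementary but the two are equivalent in spirit.
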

\begin{proof}
 Because $u_n$ is $h_1$-bounded there is an $h_1$-weak convergent subsequence $v_n$. Let now $v$ be the $h_1$-weak limit of an $h_1$-weak converging subsequence $v_n$. Then for all $w \in \calD(H)$ we have
 \[ (v_n, (H + 1) w) = h_1(v_n, w) \rightarrow h_1(v, w) = (v, (H + 1) w). \]
 With $(v_n, (H + 1) w) \rightarrow (u, (H + 1) w)$ we conclude that $u = v$. As this holds for any $h_1$-weak converging subsequence, we infer the statement.
\end{proof}

Moreover, we recall the following Lemma (see \cite[Lemma 3.2.2.]{FukushimaOT-94}).
\begin{lemma} \label{lem:Fukushima-weak} Let $H$ be a Hilbert space with inner product $\langle \cdot,\cdot \rangle$. Let $(\cdot,\cdot)$ be a sequilinear form on $H$ with $0 \leq (\cdot,\cdot) \leq \langle \cdot,\cdot \rangle$. Then, $(u_n,v)\to (u,v)$ whenever $u_n\to u$ weakly in the sense of $\langle\cdot,\cdot\rangle$.
\end{lemma}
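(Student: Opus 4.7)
The plan is a two-step argument: represent the form by a Riesz vector depending on $v$, and then invoke weak convergence directly against that vector.

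The first step is to establish boundedness of the form. Since the sesquilinear form $(\cdot,\cdot)$ is (positive semidefinite, hence Hermitian), the Cauchy--Schwarz inequality gives $|(u,v)|^2 \leq (u,u)\,(v,v)$, and combined with the hypothesis $(u,u) \leq \langle u,u\rangle$ this yields
\[ |(u,v)| \leq \|u\|\, \|v\|, \]
where $\|\cdot\|$ is the norm induced by $\langle\cdot,\cdot\rangle$. In particular, for each fixed $v \in H$ the functional $u \mapsto (u,v)$ is a bounded linear functional on $H$. By the Riesz representation theorem there exists a vector $w_v \in H$ (depending only on $v$) such that
\[ (u,v) = \langle u, w_v\rangle \qquad \text{for all } u \in H. \]

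The second step is immediate: substituting $u = u_n$ and using the definition of weak convergence $u_n \to u$ (in the sense of $\langle\cdot,\cdot\rangle$) against the fixed test vector $w_v$ gives
\[ (u_n,v) \;=\; \langle u_n, w_v\rangle \;\longrightarrow\; \langle u, w_v\rangle \;=\; (u,v), \]
which is the desired conclusion. There is really no obstacle to overcome here; the lemma is essentially the standard fact that every bounded linear functional is sequentially weakly continuous, once one has passed from the sesquilinear form to its Riesz representer via the comparison with $\langle\cdot,\cdot\rangle$.
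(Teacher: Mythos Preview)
Your proof is correct. The paper does not actually supply its own proof of this lemma; it merely recalls the statement and cites \cite[Lemma 3.2.2]{FukushimaOT-94}, so there is no in-paper argument to compare against. Your route---Cauchy--Schwarz for the positive semidefinite form to get boundedness, then Riesz representation to convert $u\mapsto(u,v)$ into $u\mapsto\langle u,w_v\rangle$, then the definition of weak convergence---is exactly the standard argument one finds for this fact. One small remark: the implication ``positive semidefinite $\Rightarrow$ Hermitian'' that you use is valid because the lemma is stated for a sesquilinear form on a complex Hilbert space (as the appendix context makes clear); in a purely real setting one would need symmetry as an explicit hypothesis, but that is a quibble about the statement rather than your proof.
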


We now come to the proof of Proposition \ref{prop:continuity}.
\begin{proof}[Proof of Proposition \ref{prop:continuity}]
 As the $(u_n)$ converge weakly with respect to $\calE_1$, the sequence $(u_n)$ is bounded with respect to the energy norm. In particular, there exists a $C>0$ with $\calE(u_n, u_n) \leq C$ for all $n\in \NN$ and Cauchy Schwartz inequality yields:

 \[ |\int f d\mu^{(*)} (u_n, v) |^2 \leq \int |f|^2 d\mu^{(*)} (u_n,u_n) \int 1 d\mu^{(*)} (v) \leq \|f\|_\infty C \calE (v). \]
 It therefore suffices to show the desired convergence for $f$ from a dense set in $C_c(X)$. By regularity, such a set is given by the $C_c(X) \cap \calD$. By the same argument it suffices to show the convergence for $v$ from $\calD \cap L^\infty$ which is a dense set in $\calD$. Let now $f \in C_c (X)\cap \calD $ and $v \in \calD \cap L^\infty$ be given.

 By the assumptions and Theorem \ref{thm:algebraic_structure} $\calE_1(u_n v, u_n v)$ and $\calE_1(u_n f, u_n f)$ are bounded. We have also $L^2$-weak convergence of $u_n \rightarrow u$ by Lemma \ref{lem:Fukushima-weak}. From this $L^2$-weak convergence of $u_n v \rightarrow u v$ and $u_n f \rightarrow u f$ follows. Thus, and Lemma \ref{lem:weak_convergence} gives $\calE_1$-weak convergence, too. We thus have
 \[ (\sharp) \;\: u_n \to u,\;\: u_n v \rightarrow u v,\:\;\mbox{and}\;\: u_n f \rightarrow u f\;\:\mbox{ $\calE_1$-weakly}. \]

 As is well known and can also be seen by the Leibniz rule given above  we can now express the measure $\mu^{(*)}$ via the forms $\calE^{(*)}$ as
 \[ \int f \rmd \mu^{(a)}(u_n, v) = \calE^{(a)}(u_n, f v)\]
 for $* = a$ and by
 \[ 2 \int f \rmd \mu^{(*)}(u_n, v) = \calE^{(*)}(u_n f, v) + \calE^{(*)}(v f, u_n) - \calE^{(*)}(u_n v, f) \]
 for $* = b, c, d$. The stated convergence now follow from Lemma \ref{lem:Fukushima-weak} and $(\sharp)$.
\end{proof}

\end{appendix}


\end{document}